\documentclass[reqno,12pt]{amsart}

\usepackage{graphicx,pdfsync,amssymb, latexsym,amsfonts,amsbsy,color,subcaption,esint,mathtools,enumerate,nicefrac,bm}
\usepackage{hyperref}
\usepackage{booktabs}
\usepackage{multirow}
\hypersetup{
colorlinks=true,
linkcolor=blue,
filecolor=blue,
urlcolor=blue,
citecolor=blue,
}

\usepackage{float}
\usepackage{mathrsfs}
\restylefloat{table}

\usepackage{enumitem}
\setitemize{itemsep=0.4em}
\setenumerate{itemsep=0.5em}

\usepackage[utf8]{inputenc}
\usepackage[T1]{fontenc}
\linespread{1.2}



\usepackage[top=1.1in, bottom=1in, left=1in, right=1in]{geometry}



\newtheorem{theorem}{Theorem}[section]
\newtheorem{lemma}[theorem]{Lemma}
\newtheorem{proposition}[theorem]{Proposition}
\newtheorem{definition}[theorem]{Definition}

\newtheorem{remark}[theorem]{Remark}

\newcommand{\thistheoremname}{}
\newtheorem{genericthm}[theorem]{\thistheoremname}

 \newtheorem*{genericthm*}{\thistheoremname}
\newenvironment{namedthm*}[1]
  {\renewcommand{\thistheoremname}{#1}%
   \begin{genericthm*}}
  {\end{genericthm*}}

\usepackage{times}
\usepackage{setspace}
\newcommand{\dd}{\mathop{}\!\mathrm{d}}
\let\del\partial
\newcommand{\supp}{\text{supp}\,}

\newcommand{\RR}{\mathring R}
\newcommand{\R}{\mathbb R}
\newcommand{\QQ}{\mathbb Q}
\newcommand{\ZZ}{\mathbb Z}
\newcommand{\NN}{\mathbb N}
\newcommand{\RRR}{\mathring{\bar R}}
\newcommand{\ootimes}{\mathbin{\mathring{\otimes}}}

\DeclareMathOperator{\tr}{Tr}
\newcommand{\uin}[0]{u^{\textup{in}}}

\newcommand{\vv}[0]{\bar u}

\newcommand{\pp}[0]{\bar p}

\newcommand{\wpq}[1][q+1]{w^{\textup{(p)}}_{#1}}
\newcommand{\wcq}[1][q+1]{w^{\textup{(c)}}_{#1}}

\newcommand{\wtq}[1][q+1]{w^{\textup{(t)}}_{#1}}

\newcommand{\PP}[0]{\mathbb{P}_{\neq 0}}


\newcommand{\TTT}[0]{\mathbb{T}}

\newcommand{\ii}{\textup{i}}

\newcommand{\T}{\textup{T}}

\newcommand{\loc}{\text{loc}}

\newcommand{\Div}{\text{div}}

\newcommand{\uql}{u^{\text{loc}}_q}
\newcommand{\uqnl}{u^{\text{non-loc}}_q}
\newcommand{\uqql}{u^{\text{loc}}_{q+1}}
\newcommand{\uqqnl}{u^{\text{non-loc}}_{q+1}}
\newcommand{\ubql}{\bar{u}^{\text{loc}}_q}
\newcommand{\ubqnl}{\bar{u}^{\text{non-loc}}_q}
\newcommand{\uq}{u_q}
\newcommand{\ulql}{u^{\text{loc}}_{\ell_q}}
\newcommand{\ulqnl}{u^{\text{non-loc}}_{\ell_q}}
\newcommand{\Rem}{R^{\text{rem}}_q}
\newcommand{\LinfB}[1][q+1]{\widetilde{L}^\infty_T B^{\frac{1}{2}}_{2,1}}
\newcommand{\LoB}[1][q+1]{{L}^1_t B^{1/2}_{2,1}}
\newcommand{\LoBt}[1][q+1]{\widetilde {L}^1_t B^{5/2}_{2,1}}
\allowdisplaybreaks[4]

\numberwithin{equation}{section}


\begin{document}

\title{Sharp non-uniqueness  for the Navier-Stokes
equations in $\R^3$}

\author{Changxing Miao}
\address[Changxing Miao]{Institute  of Applied Physics and Computational Mathematics, Beijing, China.}

\email{miao\_changxing@iapcm.ac.cn}

\author{Yao Nie}

\address[Yao Nie]{School of Mathematical Sciences and LPMC, Nankai University, Tianjin, China.}

 \email{nieyao@nankai.edu.cn}

\author{Weikui Ye}

\address[Weikui Ye]{School of Mathematical Sciences, South China Normal University, Guangzhou,  China}

 \email{904817751@qq.com}


\date{\today}

\begin{abstract}In this paper, we prove a sharp and strong non-uniqueness for a class of weak solutions to the incompressible  Navier-Stokes equations in $\R^3$. To be more precise, we exhibit the non-uniqueness result in a strong sense, that is, any weak solution  is non-unique in $ L^p([0,T];L^\infty(\R^3))$ with $1\le p<2$. Moreover, this   non-uniqueness result  is sharp with regard to the classical Ladyzhenskaya-Prodi-Serrin criteria at endpoint $(2, \infty)$, which extends the sharp nonuniqueness for the Navier-Stokes equations on torus $\TTT^3$ in  the  recent groundbreaking work (Cheskidov and Luo, Invent. Math., 229 (2022), pp. 987-1054) to the setting of the whole space. The key ingredient is developing a new  iterative scheme that balances the compact support of the Reynolds stress error with the non-compact support of the solution via introducing   incompressible perturbation fluid.
\end{abstract}
\keywords {The Navier-Stokes equations; Weak solution; Sharp non-uniqueness; Cauchy problem; Convex integration}
\maketitle
\section{Introduction}
In this paper, we consider the Cauchy problem for the  incompressible Navier-Stokes
equations
\begin{equation}\label{NS}\tag{NS}
\left\{ \begin{alignedat}{-1}
   & \del_t u-\Delta u+\Div (u\otimes u)  +\nabla p   =0,  &(t,x)\in [0,T]\times\R^3,
 \\ 
  &\Div\, u = 0, &(t,x)\in  [0,T]\times\R^3,
\end{alignedat}\right.
\end{equation}
associated with initial data $u|_{t=0}=u_0$. Here $u: \R^3\times [0,T]\to\R^3$ denotes the velocity of the incompressible fluid and $p:\R^3\times [0,T]\to\R$ represents the pressure field.

For the incompressible Navier-Stokes equations \eqref{NS}, Leray in  the original work \cite{L} proved the existence of global weak solutions in $C_w([0, T]; L^2(\R^3)) \cap L^2([0, T]; \dot{H}^1(\R^3)) $ with the energy inequality
$$\|u(t)\|_{L^2}^{2}+2\int_{0}^{t}\|\nabla u(s)\|_{L^2}^{2}{\rm d s}\leq \|u_0\|_{L^2}^{2},\quad \forall \,t\in[0,T].$$
And this result was further developed by Hopf \cite{Hopf} in general domains. This class of weak solutions is now called as\textit{ Leray-Hopf weak solutions}.  The existence of Leray-Hopf weak solutions to the equations  \eqref{NS}  has been proven for nearly a century, but the issues of uniqueness and regularity for Leray-Hopf weak solutions of the equations \eqref{NS} remain open.  In order to understand how far we are from  addressing these challenging problems, many researchers are committed to seeking uniqueness or regularity criteria, which serve as sufficient conditions to ensure the uniqueness or regularity of  Leray-Hopf weak solutions. There  has been a variety of uniqueness criteria, such as the serrin type criterion,  which  was established by Prodi \cite{Prodi}, Serrin \cite{Serr}, Lady\v{z}henskaya \cite{Lady} and Kozono-Sohr~\cite{KS}. This shows that Leray-Hopf weak solutions with the same initial data are consistent on $[0, T]$, provided  one of  Leray-Hopf weak solutions $u$ satisfies
\begin{align}\label{LPS}
    u\in  L^p([0,T];L^q(\R^3)), \,\,\text{with}\,\,\frac{2}{p}+\frac{3}{q}\le 1,\,\,  3\le q\le\infty.
\end{align}
The condition \eqref{LPS}  can also serve as a regularity criterion for the Leray-Hopf weak solutions and readers can refer to \cite{ESS, Lady, Prodi} and references therein for more details.

As a matter of fact,  Fabes, Jones and Rivi\`{e}re \cite{FJR} proved that  the condition \eqref{LPS} except for the endpoint case $q=3$ also acts as uniqueness  criterion  for  \textit{very weak solution} of \eqref{NS}.
Later, {Furioli}, {Lemari\`{e}-Rieusset} and {Terraneo} \cite{FLT} showed that the  very weak solution is unique in the functional space $C([0,T];L^3(\R^3))$. Here the  very weak solution is defined as follows.
\begin{definition}[Very weak solution]\label{def} Let $u_0\in L^2(\R^3)$ be divergence-free in the sense of distributions. We say that  $u\in L^2([0, T]\times\R^3)$ is a \emph{very weak solution}  to  the equations~\eqref{NS} if
\begin{itemize}
     \item [(1)] For a.e. $t\in [0,T]$, $u(\cdot, t)$ is divergence-free in the sense of distributions;
     \item [(2)]For all divergence-free test functions $\phi\in\mathcal{D}_T$,\footnote{Let $\mathscr{S}(\R^n)$ denote the space of rapidly decreasing functions on $\R^n$. We denote by $\mathcal{D}_T$ those vector functions $\phi(x,t)=(\phi_1(x,t), \phi_2(x,t), \phi_3(x,t))$ such that $\phi_i(x,t)\in \mathscr{S}(\R^4)$, $\phi_i(x,t)=0$ for $t\ge T$ and $\Div \phi =\sum_{i=1}^3\partial_{x_i}\phi_i(x,t)=0$ for all $t$.}
     \begin{align}\nonumber
\int_0^T \int_{\R^3} (\del_t-\Delta)\,\phi u+\nabla \phi : u\otimes u \dd x \dd t=  -\int_{\R^3} u_0 \phi(0,x)\dd x,
\end{align}
   \end{itemize}
   where $\nabla \phi : u\otimes u=\partial_i\phi_ju_iu_j$ under the Einstein summation convention.
\end{definition}

On the non-uniqueness problems for different types of weak solutions, there have been major progresses in recent years. For the 3D incompressible Navier-Stokes equations in the periodic setting, Buckmaster and Vicol in \cite{BV} made the significant break-through via a ${L^2_x}$-based intermittent convex integration scheme, demonstrating that weak solutions  are not unique in $C([0,T];L^2(\TTT^3))$. Subsequently, Buckmaster, Colombo and  Vicol \cite{BCV} proved that the wild solutions can be generated by  $H^3(\TTT^3)$ initial data, which implies the non-uniqueness of very weak solutions. However, the regularity of these weak solutions is far from the critical functional space $L^p([0,T];L^q(\TTT^3))$ with  $\frac{2}{p}+\frac{3}{q}=1$.  Recently, Cheskidov and Luo in the remarkable  paper \cite{1Cheskidov} proved the nonuniqueness of very weak solutions
in the class $L^p([0,T]; L^{\infty}(\TTT^3))$ for $1\le p<2$ and this result implies the sharpness of the Ladyzhenskaya-Prodi-Serrin criteria $\frac{2}{p}+\frac{3}{q}\le 1$ at the endpoint $(p,q)=(2,\infty)$. A series of works on Euler equations, e.g.  \cite{Buc, BDIS15, BDS, DS17, DRS, DS09, DS14, Ise17, Ise18}, is based on developing the method of convex integration, which was also successfully applied to study other fluid dynamics models,  for instance, the stationary Navier-Stokes equations \cite{Luo},  the transport equations
\cite{BCD, CL21, CL22, MoS, MS}, the MHD equations \cite{2Beekie, LZZ, MY}, the Boussinesq equations \cite{MNY, TZ17, TZ18}. Apart from using convex integration to construct  non-unique weak solutions, mathematicians attempt to use spectral analysis methods to study the non-uniqueness of Leray-Hopf weak solutions to the Navier-Stokes equations.  For example, Jia and  \v{S}ver\'{a}k \cite{JS} provided  the spectral conditions as sufficient conditions for non-uniqueness of Leray-Hopf weak solutions.  Albritton, Bru\'{e} and Colombo \cite{ABC} proved the non-uniqueness of the Leray-Hopf weak solutions of the equations \eqref{NS} with a special forcing term.

To the best of our knowledge, the works (e.g. \cite{BCV, BV, 1Cheskidov}) on the non-uniqueness of weak solutions for the Navier-Stokes equations within the convex integration scheme are established under the periodic setting, and the iterative scheme in these works cannot be directly applied to the case of unbounded domains, such as $\R^3$. On the other hand, it has been shown that all  very weak solutions of the Navier-Stokes equations \eqref{NS} with the same initial data are consistent in $L^2([0,T]; L^{\infty}(\R^3))$, while the   counterexamples in  \cite{1Cheskidov} show the non-uniqnessness of very weak solutions in $L^p([0,T]; L^{\infty}(\TTT^3))$ for $1\le p<2$.  It is natural to ask whether one can show the sharp non-uniqnessness of  very weak solutions  in $L^p([0,T]; L^{\infty}(\R^3))$ for $1\le p<2$. In this paper, we aim to address this question. For convenience, we refer to very weak solution in Definition \ref{def} as weak solution in the remaining part of this paper.

 Before stating our results, we give two classes of  non-uniqueness definitions in functional space $X$ introduced in \cite{1Cheskidov}:
\begin{align}
&\bullet~\text{``Weak non-uniqueness'': there exists a non-unique weak solution in the class $X$.}\notag\\
&\bullet~\text{``Strong non-uniqueness'': any weak solution in the class $X$ is non-unique.}\notag
\end{align}

 Now we are in position to state our main theorem, which reveals a sharp non-uniqueness in a strong sense with regrad to the classical Ladyzhenskaya-Prodi-Serrin criteria at endpoint $(2, \infty)$ in the whole space~$\R^3$.

\begin{theorem}[Sharp and strong non-uniqueness]\label{t:main0}
Let $1\le p<2$ and $T_0>0$. Any weak solution~$u$ of  the equations \eqref{NS}  in $L^p([0, T_0]; L^{\infty}(\R^3))$ is non-unique.

\begin{remark}
Given $1\le p < 2$, Cheskidov and Luo \cite{1Cheskidov} showed the non-uniqueness of a weak solution $u$ to  the
Navier-Stokes equations in $L^p([0, T_0]; L^{\infty}(\TTT^3))$  \textit{if $u$ has at least one interval regularity}. Our result shows the non-uniqueness of any weak solution $u$ in  $L^p([0, T_0]; L^{\infty}(\R^3))$ without  imposing the regularity assumption on $u$. More importantly, in the setting of the whole space, we develop a new iterative scheme via incompressible perturbation fluid, which is of independent interest.

\end{remark}

\end{theorem}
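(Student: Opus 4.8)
The plan is to fix an arbitrary weak solution $u\in L^p([0,T_0];L^\infty(\R^3))$ with datum $u_0$ and to construct a nontrivial divergence-free field $v$, compactly supported in space and vanishing on a neighbourhood of $t=0$, such that $u+v$ is again a weak solution in the same class; then $u$ and $u+v$ are distinct weak solutions with the same initial datum. Since $u$ solves \eqref{NS} distributionally, $v$ must solve the perturbed system
\begin{align*}
\partial_t v-\Delta v+\Div(v\otimes v+u\otimes v+v\otimes u)+\nabla q=0,\qquad \Div v=0 ,
\end{align*}
in which $u$ enters \emph{only} through the cross terms $u\otimes v$ and $v\otimes u$: if $v$ is compactly supported these terms are compactly supported, are never differentiated, and are felt solely via $\|u\|_{L^p_tL^\infty_x}$. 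This is exactly what lets us drop the interval-of-regularity hypothesis of \cite{1Cheskidov}. We realise $v$ as the limit of a convex-integration iteration producing smooth pairs $(v_q,R_q)$ solving the Navier--Stokes--Reynolds system relative to $u$,
\begin{align*}
\partial_t v_q-\Delta v_q+\Div(v_q\otimes v_q+u\otimes v_q+v_q\otimes u)+\nabla q_q=\Div R_q ,
\end{align*}
with $v_q$, $R_q$ supported in a fixed compact set $\mathbf K\subset\R^3$ and in $\{t\ge\tau_0\}$ for some $\tau_0>0$, with $\|R_q\|_{L^1_tL^1_x}\to0$ and $\sum_q\|v_{q+1}-v_q\|_{L^p_tL^\infty_x}<\infty$, together with a matching $L^\infty_tL^2_x$ bound so that $v\in L^2([0,T_0]\times\R^3)$. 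Starting from $v_0\equiv0$ (hence $R_0\equiv0$) would be stationary, so the first step injects a small smooth compactly supported divergence-free bump localised in time inside $(\tau_0,T_0)$, producing a nonzero but small compactly supported $R_1$.

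The heart of the argument is the inductive step $(v_q,R_q)\mapsto(v_{q+1},R_{q+1})$. Following the intermittent scheme of Cheskidov and Luo, the perturbation is built from building blocks concentrated in both space (frequency $\lambda_{q+1}$) and time (frequency $\mu_{q+1}$), the principal part $\wpq$ being chosen so that the low-frequency part of $\wpq\otimes\wpq$ cancels the amplitude cut-off of $R_q$, leaving $\Div(\wpq\otimes\wpq)+\Div R_q$ high-frequency. The new stress $R_{q+1}$ then splits into oscillation, transport, Nash, viscosity/linear and corrector contributions; the pair $(\lambda_{q+1},\mu_{q+1})$ is tuned exactly at the Ladyzhenskaya--Prodi--Serrin endpoint $(2,\infty)$ so that $\|\wpq\|_{L^p_tL^\infty_x}$ is summably small for every $p<2$ while the bottleneck viscosity error $\wlq$ (morally $\mathcal R\Delta\wpq$) still closes, and the material-derivative estimates are taken along the flow of a regularisation of $v_q$ only, the interaction of $u$ with the building blocks being handled purely perturbatively.

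The genuinely new point, and where the whole space differs from $\TTT^3$, is that the building blocks must be truncated to the compact set carrying $R_q$, and both the truncation and the nonlocal inverse-divergence operator $\mathcal R$ (needed for the temporal corrector $\wtq$ and for re-expressing the oscillation error) destroy either $\Div=0$ or the compact support of $R_{q+1}$. This is resolved by the ``incompressible perturbation fluid'': alongside $w_{q+1}$ one adds a second divergence-free correction $d_{q+1}$, split into pieces $\dpq,\dcq,\dtq,\dots$ mirroring those of $w_{q+1}$, designed so that every term that would otherwise leak a non-compactly-supported contribution into the stress is instead absorbed into $d_{q+1}$ at a controlled, summably small $L^p_tL^\infty_x$ (and $L^\infty_tL^2_x$) cost, so that $R_{q+1}$ remains supported in a slightly enlarged compact set. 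Concretely one works throughout with potential (curl / antisymmetric-tensor) representations so that spatial cut-offs preserve incompressibility, and one exploits the rapid off-support decay of $\mathcal R$ applied to high-frequency compactly supported inputs to estimate the tails diverted into $d_{q+1}$. I expect this bookkeeping --- simultaneously keeping $\supp R_{q+1}$ compact and $\|w_{q+1}\|+\|d_{q+1}\|$ summable in the critical norm, i.e.\ ``balancing'' the compact support of the stress against the non-compact $u$ --- together with checking that the extra fluid $d_{q+1}$ does not overspend the delicate endpoint parameter budget, to be the main obstacle.

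Granting the inductive step, the telescoping series $v:=v_1+\sum_{q\ge1}(v_{q+1}-v_q)$ converges in $L^p_tL^\infty_x\cap L^\infty_tL^2_x$, is divergence-free, compactly supported in $\mathbf K$ and in $\{t\ge\tau_0\}$, hence $v|_{t=0}=0$; since $R_q\to0$ in $L^1_{t,x}$, the field $u+v$ solves \eqref{NS} in the sense of Definition~\ref{def} with datum $u_0$. Finally $v\not\equiv0$ because the first bump is nonzero and all later corrections are small, so $u$ and $u+v$ are two distinct weak solutions in $L^p([0,T_0];L^\infty(\R^3))$, which is the claim.
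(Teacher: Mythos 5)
Your plan hinges on running the convex integration on the difference equation around the arbitrary rough solution $u$, with the cross terms $u\otimes v+v\otimes u$ ``handled purely perturbatively'' through $\|u\|_{L^p_tL^\infty_x}$, and you present this as the reason the interval-of-regularity hypothesis can be dropped. That is precisely the step that fails, and it fails exactly at the endpoint the theorem is about. In your inductive step the linear errors $u\otimes w_{q+1}+w_{q+1}\otimes u$ must be placed into $R_{q+1}$ and shown to be of size $o(\delta_{q+2})$ in $L^1_{t,x}$ (and, for the iteration bookkeeping, controlled in higher norms as well). The only information available on $u$ is $u\in L^p_tL^\infty_x\cap L^2_{t,x}$ with $p<2$; a very weak solution in the sense of Definition \ref{def} need not lie in $L^\infty_tL^2_x$. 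For intermittent blocks tuned to the $(2,\infty)$ endpoint (temporal concentration with $\|g\|_{L^2_t}\sim 1$, spatial concentration $\|\phi\|_{L^1_x}\sim\lambda^{-\epsilon_0/2}$ with $\epsilon_0\lesssim \tfrac2p-1$ as in \eqref{epsilon}), every H\"older/interpolation pairing of $u$ with $w_{q+1}$ is useless: writing $u\in L^r_tL^s_x$ with $\tfrac1r=\tfrac\theta2+\tfrac{1-\theta}p$, $\tfrac1s=\tfrac\theta2$, one finds $\|w_{q+1}\|_{L^{r'}_tL^{s'}_x}\gtrsim\lambda_{q+1}^{(1-\theta)[(2/p-1)-\epsilon_0/2]}$, which is large for every $\theta<1$, while at $\theta=1$ the relevant factor is $\|w_{q+1}\|_{L^2_tL^\infty_x}$, which cannot be small precisely because the construction must live at the Ladyzhenskaya--Prodi--Serrin endpoint. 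The mechanisms that make this term harmless in the literature --- pairing $\|\text{background}\|_{L^\infty_tL^2_x}$ against the temporally concentrated smallness $\|w_{q+1}\|_{L^1_tL^2_x}$, or integrating by parts / exploiting oscillation against the background --- all require $L^\infty_tL^2$ bounds or derivatives of the background, which a rough $u$ does not provide; moreover, once $u\otimes w_{q+1}$ sits inside $R_{q+1}$, the stress loses the quantitative smoothness ($L^\infty_tW^{3,1}$-type bounds, mollification rates) that the amplitude construction at the next step needs. So, as written, your scheme does not remove the regularity assumption of Cheskidov--Luo; it relocates it into an unproved claim.

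The paper never linearizes around $u$. Since $u\in L^p_tL^\infty_x\cap L^2_{t,x}$, there is a time $t_0$ with $u(t_0)\in L^2\cap L^\infty$; the mild solution $u_1$ emanating from $u(t_0)$ is smooth (in $H^3$) an instant later, and all of the convex integration (Proposition \ref{p:main-prop}) is performed around smooth approximate solutions, with the whole-space difficulty handled not by cut-off potentials and off-support decay of an inverse divergence, but by splitting $u_q=u_q^{\mathrm{loc}}+u_q^{\mathrm{non\text{-}loc}}$ and introducing the incompressible perturbation fluid $w^{(t)}_{q+1}$, an honest small solution of a forced Navier--Stokes system in critical Besov spaces that absorbs the non-divergence-form, non-compactly-supported errors $F_{q+1}$. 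Distinctness is then forced by prescribing the energy $\int_{3T/4}^{T}\int_{\R^3}|\widetilde v|^2$ (infinitely many admissible values $E$), and the second solution is obtained by gluing $u$ on $[0,t_0]$, the smooth branch $u_1$, the wild solution $\widetilde v$, and a Leray--Hopf tail via Proposition \ref{weak-glue-weak}; the constructed solution agrees with $u$ up to $t_0$ and differs afterwards. Your additive ansatz $u+v$ would be an appealing alternative, but until you supply a genuinely new mechanism for the cross terms with a merely $L^p_tL^\infty\cap L^2_{t,x}$ background (and for the attendant loss of smoothness of the stress), the proposal has a gap at its central point.
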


Let us outline the main ideas in $\R^3$. The construction of weak solutions in Theorem \ref{t:main0} is based on an iteration scheme via the technique of convex integration, see e.g. \cite{BCV, BDIS15, BV, 1Cheskidov, DRS, DS09}. Our strategy is reducing Theorem \ref{t:main0} to Proposition~\ref{p:main-prop} below. To prove Proposition~\ref{p:main-prop}, we construct a sequence of approximate solutions to  the so-called Navier–Stokes-Reynolds  system  governed by~\footnote{Here and below,  $v \otimes u:=  (v_i u_j)_{i,j=1}^3$, and the divergence of a $3\times 3$ matrix $M=(M_{ij})_{i,j=1}^3$ is defined by $\Div M$ with components $(\Div M)_j :=    \partial_i M_{ij}$.}
\begin{equation}\label{NSR}\tag{NSR}
\left\{ \begin{alignedat}{-1}
   & \del_t u_q-\Delta u_q+\Div (u_q\otimes u_q) +\nabla p_q   =\Div\RR_q,
 \\
  &\nabla \cdot u_q = 0, \\
\end{alignedat}\right.
\end{equation}
associated with some initial data $\uin\in H^3(\R^3)$.
The \emph{Reynolds stress} $\RR_q$ is trace-free symmetric matrix.

To construct weak solution $u$ in $L^p_tL^\infty_x\cap L^2_{t,x}$, we design the perturbation $w_{q+1}\sim u_{q+1}-u_q$ such that $\|w_{q+1}\|_{L^p_tL^\infty_x\cap L^2_{t,x}}\to 0$ and $\RR_q \to 0$  in an appropriate sense.  A key ingredient in reducing the size of $\RR_{q+1}$  is to construct highly oscillatory  perturbation $w_{q+1}$ so that  the Reynolds stress error $\RR_q$ can be cancelled by the low frequency of $w_{q+1}\otimes w_{q+1}$, which naturally requires that $\RR_q$ tends to~$0$ in $ L^1_{t,x}$ on the iteration scheme.  Unfortunately, compared with the periodic setting, such requirement makes it difficult to construct $L^2_{t,x}$ perturbations in the entire space.  Roughly speaking, the perturbation $w_{q+1}$ in \cite{1Cheskidov} is constructed as
\begin{align*}
w_{q+1}\sim \sum_{k\in \Lambda}a_k\Big({\rm Id}-\frac{\RR_q}{\chi(\RR_{q})}\Big)\chi^{1/2}(\RR_{q})\mathbf{W}_{k}(\sigma x)g(\nu t),
\end{align*}
 where $\mathbf{W}_{k}$ are Mikado flows, $g$ is  temporal concentration function and $\chi:\R^{3\times 3}\times \R^{+}$ is a positive smooth function. Such $\chi$  guarantees $w_{q+1}\in L^2_{t,x} $ for  $\RR_{q}\in L^1_{t,x}$ with compact spatial support, yet fail to ensure $w_{q+1}\in L^2_{t,x} $ when $\RR_{q}\in L^1_{t,x}$ without compact support.

{From a technical perspective, the procedure of constructing perturbation  via the geometric lemma imposes the Reynolds stress error to be compactly support at each iteration step. As we know, the inverse of the divergence operator preserves the periodic property of  functions. This feature naturally  ensures that the support of  Reynolds stress errors display compactness in the periodic torus. However, in the setting of the whole space, when the inverse of the divergence operator acts on a function with compact support, the resulting function may not have compact support. Furthermore, even for smooth initial data with compact support, the solutions to the Navier-Stokes equations may not possess compact support. {These facts  imply that the iterative scheme utilized on $\TTT^3$ is  not enough  for ensuring  Reynolds stress errors with compact support and divergence-form  at each step in $\R^3$. } This compels us to develop a new iterative scheme.}

{As we know, there has no result on the non-uniqueness of weak solutions for the Navier-Stokes equations via the convex integration in $\R^3$, but there has  some progress for the Euler equations. For instance, Isett and Oh in an impressive work \cite{I-O} constructed  $C^{\frac{1}{5}-}_{t,x}$ Euler flow  with compact space-time support in $\R^3$ via a new method of solving the symmetric divergence Equation, which allows them to obtain a new Euler–Reynold stress that is similarly localized in space. Different from their ideas,  we are focused on developing a new iterative scheme to avoid the use of the divergence inverse operator. The main difficulty   that follows is  to ensure compatibility between the Reynolds stress error with compact support and divergence-form, and the non-compact support property of the solution to  \eqref{NSR} at each step. The core idea of overcoming this difficulty is to balance the non-compact support  and non-divergence form parts of the new Reynolds stress by means of incompressible perturbation fluid  $\wtq$. }

To put it roughly, we construct the perturbation $w_{q+1}$ by three parts: $\wpq$, $\wcq$ and $\wtq$. The  main perturbation $\wpq$ has compact support, and is  composed by the shear intermittent flows and the temporal concentration function. $\wcq$ is the incompressibility corrector and  $\wtq$ is the so-called  incompressible perturbation fluid.  In the new iterative scheme,  we decompose $u_q$ into two parts: $u^{\text{loc}}_q$ with compact support and $u^{\text{non-loc}}_q$  without compact support. Then the new Reynolds stress $\RR_{q+1}$ is
\begin{align*}
   & \Div \RR_{q+1}\sim \,  \Div \RR_q+\Div E_{q+1}+F_{q+1}\\
   &+\partial_t \wtq-\Delta \wtq+\Div(\wtq\otimes \wtq)+\Div(\wtq\otimes u^{\text{non-loc}}_q)+\Div( u^{\text{non-loc}}_q\otimes \wtq)+\nabla p_t .
\end{align*}
Here $E_{q+1}$ consists of the parts stemming from $\wpq$, $\wcq$ or $u^{\text{loc}}_q$, hence $E_{q+1}$ has compact support. The low frequency part of  $\wpq\otimes\wpq$, one part of $E_{q+1}$, cancels the Reynolds stress $\RR_{q}$ so that the size of the stress error $\Div(\RR_q+E_{q+1})$ can be reduced. For $F_{q+1}$  which corresponds to the parts of the non-divergence form derived from  $\wpq$, $\wcq$ or $u_q$, benefiting from the special structure of the shear intermittent flows such that the oscillation direction is perpendicular to the  direction of flow, one could expect that $F_{q+1}$ is small in a suitable sense. This in turn guarantees the existence of incompressible Navier-Stokes fluid $\wtq$ that  starts from the identically zero flow, is small in some space and cancels $F_{q+1}$. Consequently, part of $E_{q+1}$ constitutes the new Reynolds tensor $\RR_{q+1}$, which maintains the divergence form and possesses compact support. Then $u^{\text{loc}}_{q+1}\sim u^{\text{loc}}_{q}+\wpq+\wcq$ with compact support, and $u^{\text{non-loc}}_{q+1}\sim u^{\text{non-loc}}_{q}+\wtq$ is small.

To show the strong non-uniqueness, we construct weak solution $v$  that satisfies the  prescribed $L^2([\tfrac{3}{4}T, T], L^2(\R^3))$-norm
\begin{align*}
\int_{\frac{3}{4}T}^T\int_{\R^3}|v(x,t)|^2\dd x\dd t=E
\end{align*}
 with the same initial data. In fact, the  proof   consists of three steps. The
first step is mollifying the approximation solution $u_q$ as $u_{\ell_q}$  for establishing higher regularity estimates of the perturbation.  To avoid the mollification procedure interfering in  initial data, we introduce $\vv_q$ by  gluing $u_q$ and $u_{\ell_q}$ in the second step. In the third step,  we introduce incompressible perturbation  fluid to construct the perturbation $w_{q+1}$ of $\vv_q$ so that the iteration proceeds successfully in our iterative scheme. By developing the new iterative scheme, we firstly show non-uniqueness of weak solutions with non-compact space support for  the  Navier-Stokes equations through convex integration.

\section{Preliminaries}
{In this section, we compile several useful tools including geometric Lemma, an improved H\"{o}lder inequality and the definition of Lerner-Chemin spaces.
\begin{lemma}[Geometric Lemma \cite{2Beekie}]\label{first S}Let $B_{\sigma}({\rm Id})$ denote the ball of radius $\sigma$ centered at $\rm Id$ in the space of $3\times3$ symmetric matrices.
There exists a set $\Lambda\subset\mathbb{S}^2\cap\mathbb{Q}^3$ that consists of vectors $k$ with associated orthonormal basis $(k,\bar{k},\bar{\bar{k}}),~\epsilon>0$ and smooth function $a_{k}:B_{\epsilon}(\rm Id)\rightarrow\mathbb{R}$ such that, for every positive definite symmetric matrix $R\in B_{\epsilon}(\rm Id)$, we have the following identity:
$$R=\sum_{k\in\Lambda}a^2_{k}(R)\bar{k}\otimes\bar{k}.$$
\end{lemma}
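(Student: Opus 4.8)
The plan is to deduce this from the elementary fact that the identity matrix lies in the \emph{interior} of the cone of positive semidefinite $3\times 3$ matrices, which is exactly the cone generated by the rank-one symmetric matrices $\{w\otimes w:w\in\mathbb S^2\}$, and then to upgrade this to a finite, rational, smoothly parametrized decomposition. Concretely there are three points to arrange: (i) the directions should lie in $\mathbb S^2\cap\mathbb Q^3$ together with rational orthonormal complements, so that the required frames $(k,\bar k,\bar{\bar k})$ exist; (ii) finitely many of the matrices $\bar k\otimes\bar k$ should already \emph{span} the $6$-dimensional space of symmetric matrices; and (iii) $\mathrm{Id}$ should admit a representation in these matrices with \emph{strictly positive} coefficients, so that the representation persists, and depends smoothly on $R$, for every $R$ in a small ball around $\mathrm{Id}$.

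First I would record a density fact: the group $SO(3,\mathbb Q)$ of rational rotations is dense in $SO(3)$. This follows from the Cayley transform $A\mapsto(\mathrm{Id}-A)(\mathrm{Id}+A)^{-1}$, which maps the skew-symmetric matrices onto a dense subset of $SO(3)$ and sends rational skew-symmetric matrices — dense among all skew-symmetric matrices — to rational rotations. The columns of an element of $SO(3,\mathbb Q)$ form an orthonormal frame $(k,\bar k,\bar{\bar k})$ with all three vectors in $\mathbb S^2\cap\mathbb Q^3$, and as we vary the rotation the middle column $\bar k$ sweeps out a dense subset of $\mathbb S^2$; hence the family $\mathcal G:=\{\bar k\otimes\bar k:(k,\bar k,\bar{\bar k})\text{ a rational orthonormal frame}\}$ is dense in $\{w\otimes w:w\in\mathbb S^2\}$.

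Next I would fix the finite set of directions. Since $\mathrm{Id}$ is positive definite it lies in the interior of the positive semidefinite cone $=\mathrm{cone}\{w\otimes w:w\in\mathbb S^2\}$; a routine Carathéodory argument (write each positive-definite matrix $\mathrm{Id}\pm\delta E$, for $\delta$ small and $E$ ranging over a basis of the symmetric matrices, as a finite positive combination of rank-ones, and collect the directions that appear) produces a finite family $w_1,\dots,w_m\in\mathbb S^2$ such that $\{w_j\otimes w_j\}_{j=1}^m$ spans the symmetric matrices and $\mathrm{Id}$ lies in the interior of $\mathrm{cone}\{w_j\otimes w_j\}$. Both of these are open conditions on the tuple $(w_1,\dots,w_m)$, so using the density of $\mathcal G$ I would perturb each $w_j$ to a vector $\bar k_j$ coming from a rational orthonormal frame $(k_j,\bar k_j,\bar{\bar k}_j)$, close enough that $\{\bar k_j\otimes\bar k_j\}_{j=1}^m$ still spans and $\mathrm{Id}$ still lies in the interior of $\mathrm{cone}\{\bar k_j\otimes\bar k_j\}$ (and, by a generic choice, the $k_j$ are distinct). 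Then $\Lambda:=\{k_1,\dots,k_m\}\subset\mathbb S^2\cap\mathbb Q^3$ is the desired index set.

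Finally, for the smooth dependence on $R$ I would use linear algebra only. The linear map $L\colon\mathbb R^m\to\{\text{symmetric }3\times3\text{ matrices}\}$, $(\lambda_j)_j\mapsto\sum_j\lambda_j\,\bar k_j\otimes\bar k_j$, is surjective by (ii), so it admits a linear right inverse $M$; choosing $\bar\lambda\in(0,\infty)^m$ with $L\bar\lambda=\mathrm{Id}$ (possible by (iii)), set $\lambda(R):=\bar\lambda+M(R-\mathrm{Id})$, which is affine in $R$ and satisfies $L(\lambda(R))=R$ identically. By continuity there is $\epsilon>0$ such that $\lambda_j(R)>0$ for all $j$ and all $R\in B_\epsilon(\mathrm{Id})$; then the functions $a_{k_j}(R):=\lambda_j(R)^{1/2}$ are smooth on $B_\epsilon(\mathrm{Id})$ and satisfy $R=\sum_{k\in\Lambda}a_k^2(R)\,\bar k\otimes\bar k$, which is the claim (after shrinking $\epsilon$ if necessary, every $R\in B_\epsilon(\mathrm{Id})$ is automatically positive definite). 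The one genuinely delicate point is the construction in the third step above — extracting, from the dense but countable family of rational orthonormal frames, a \emph{finite} sub-family whose rank-one matrices simultaneously span the symmetric $3\times3$ matrices and represent $\mathrm{Id}$ with strictly positive weights — and this is exactly what the openness of the conditions ``the $\bar k_j\otimes\bar k_j$ span'' and ``$\mathrm{Id}$ is interior to their cone'' buys us against the density of $\mathcal G$.
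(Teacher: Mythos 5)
Your argument is correct: the density of rational rotations via the Cayley transform, the Carath\'eodory-type extraction of a finite spanning family with $\mathrm{Id}$ interior to its cone, the stability of both conditions under small perturbations, and the affine right-inverse construction $\lambda(R)=\bar\lambda+M(R-\mathrm{Id})$ with $a_k=\lambda_k^{1/2}$ all go through (the only steps left implicit, e.g.\ that interiority plus spanning yields a representation of $\mathrm{Id}$ with strictly positive weights --- replace $\mathrm{Id}$ by $\mathrm{Id}-t\sum_j \bar k_j\otimes\bar k_j$ for small $t>0$ and add $t$ to each coefficient --- are routine). The route is, however, genuinely different from the paper's: the paper does not reprove the lemma but quotes it from the MHD paper of Beekie--Buckmaster--Vicol and, in the accompanying remark, simply exhibits the concrete family $\Lambda=\{\tfrac{5}{13}e_1\pm\tfrac{12}{13}e_2,\ \tfrac{12}{13}e_1\pm\tfrac{5}{13}e_3,\ \tfrac{5}{13}e_2\pm\tfrac{12}{13}e_3\}$ built from the Pythagorean triple $(5,12,13)$, for which one checks directly that $\sum_{k\in\Lambda}\bar k\otimes\bar k=2\,\mathrm{Id}$ and that the six matrices $\bar k\otimes\bar k$ span the symmetric $3\times3$ matrices; the smooth coefficients then come from exactly the linear-algebra step you use at the end. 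The explicit choice has the practical advantage that the integer $N_\Lambda$ with $N_\Lambda k\in\mathbb Z^3$ is concrete (here $13$), which is convenient in the construction of the building blocks, whereas your soft argument proves existence of some admissible rational $\Lambda$ without exhibiting it --- entirely sufficient for the lemma as stated, but less explicit than what the paper relies on.
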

\begin{remark}
  For instance, $\Lambda=\{\frac{5}{13}e_1\pm \frac{12}{13}e_2, \frac{12}{13}e_1\pm \frac{5}{13}e_3, \frac{5}{13}e_2\pm \frac{12}{13}e_3\}$ and $(k,\bar{k},\bar{\bar{k}})$ are as follows:
\begin{table}[ht]
\renewcommand\arraystretch{1.2}
\begin{tabular}{p{2cm}|p{2cm}|p{2cm}}
\toprule[1.2pt]
$\qquad k$& $\qquad {\bar{k}}$ & $\qquad {\bar{\bar{k}}}$\\\midrule[0.5pt]
$\frac{5}{13}e_1\pm \frac{12}{13}e_2$ &{$\frac{5}{13}e_1\mp\frac{12}{13}e_2 $}&\qquad  $e_3$ \\\hline
$\frac{12}{13}e_1\pm \frac{5}{13}e_3$&{$\frac{12}{13}e_1\mp \frac{5}{13}e_3$}& \qquad $e_2$\\\hline
$\frac{5}{13}e_2\pm \frac{12}{13}e_3$ &{$\frac{5}{13}e_2\mp \frac{12}{13}e_3$}& \qquad $e_1$\\\bottomrule[1.2pt]
\end{tabular}
\end{table}
\end{remark}

{We provide an improved H\"{o}lder's inequality. In fact, the improved H\"{o}lder's inequality on periodic functions is established in \cite{1Cheskidov, MoS}. From its proof, one easily  deduces the following improved H\"{o}lder's inequality for  functions with compact support.
\begin{lemma}[\cite{1Cheskidov, MoS}]\label{Holder}Assume that $d\ge1$, $1\le p\le \infty$, $\lambda$ and $L$ are positive integers. Let $\Omega=\big[-\tfrac{L}{2}, \tfrac{L}{2}\big]^d\subseteq\R^d$ and smooth function $f$ support on $\Omega$. $g: \TTT^d\to\R$ is a smooth function and $\TTT^d=\R^d/(L\ZZ)^d$. Then we have
\[\Big|\|fg(\lambda\cdot)\|_{L^p(\Omega)}-\|f\|_{L^p(\Omega)}\|g\|_{L^p(\Omega)}\Big|\lesssim L^{\frac{1}{p}}\lambda^{-\frac{1}{p}}\|f\|_{C^1(\Omega)}\|g\|_{L^p(\TTT^d)}.\]
 \end{lemma}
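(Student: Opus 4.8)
The plan is to rerun the Riemann-sum argument underlying the periodic improved Hölder inequality of \cite{1Cheskidov, MoS}, the one point needing care being that $\Omega=[-\tfrac L2,\tfrac L2]^d$ is \emph{exactly} one fundamental domain of $\TTT^d=\R^d/(L\ZZ)^d$, so that the dilated factor $g(\lambda\cdot)$ decouples cube by cube; throughout, the $L^p$-norms on $\Omega$ and $\TTT^d$ are the averaged ones, so that $\|g\|_{L^p(\Omega)}=\|g\|_{L^p(\TTT^d)}$ for the $L$-periodic $g$. First I would pass to $p$-th powers and partition $\Omega$ into the $\lambda^d$ axis-parallel subcubes $\{Q_j\}$ of side length $L/\lambda$, with centers $\{x_j\}$. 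The key point is that $y=\lambda x$ maps each $Q_j$ onto a translate of $[0,L]^d$, i.e.\ onto a full period of $g$, so that $\int_{Q_j}|g(\lambda x)|^p\dd x$ is the same quantity for every $j$, namely a fixed multiple of $\|g\|_{L^p(\TTT^d)}^p$.

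Next I would freeze the slowly varying factor $f$ at $x_j$ on each $Q_j$. Using the elementary inequality $\big||a|^p-|b|^p\big|\le p\,(|a|+|b|)^{p-1}|a-b|$ (valid for $p\ge1$) together with $|f(x)-f(x_j)|\le \sqrt d\,(L/\lambda)\,\|f\|_{C^1(\Omega)}$ on $Q_j$, and absorbing $(|a|+|b|)^{p-1}$ into $\|f\|_{C^1(\Omega)}^{p-1}$, the per-cube replacement error obeys
\[
\Big|\int_{Q_j}\big(|f(x)|^p-|f(x_j)|^p\big)\,|g(\lambda x)|^p\dd x\Big|\lesssim \frac{L}{\lambda}\,\|f\|_{C^1(\Omega)}^p\int_{Q_j}|g(\lambda x)|^p\dd x .
\]
Summing over $j$ removes the oscillatory factor, with total error $\lesssim \tfrac L\lambda\,\|f\|_{C^1(\Omega)}^p\,\|g\|_{L^p(\TTT^d)}^p$; what is left is a Riemann-sum approximation to $\|f\|_{L^p(\Omega)}^p$ multiplied by $\|g\|_{L^p(\TTT^d)}^p$, and that Riemann sum differs from $\|f\|_{L^p(\Omega)}^p$ by an error of the same order, again by the Lipschitz bound applied to $|f|^p$. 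Altogether,
\[
\Big|\,\|fg(\lambda\cdot)\|_{L^p(\Omega)}^p-\|f\|_{L^p(\Omega)}^p\,\|g\|_{L^p(\TTT^d)}^p\,\Big|\lesssim \frac{L}{\lambda}\,\|f\|_{C^1(\Omega)}^p\,\|g\|_{L^p(\TTT^d)}^p .
\]

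Finally I would take $p$-th roots: subadditivity of $t\mapsto t^{1/p}$ on $[0,\infty)$ for $p\ge1$ yields $\big|A^{1/p}-B^{1/p}\big|\le|A-B|^{1/p}$, which upgrades the last display to
\[
\big|\,\|fg(\lambda\cdot)\|_{L^p(\Omega)}-\|f\|_{L^p(\Omega)}\,\|g\|_{L^p(\TTT^d)}\,\big|\lesssim L^{1/p}\lambda^{-1/p}\,\|f\|_{C^1(\Omega)}\,\|g\|_{L^p(\TTT^d)} ,
\]
and $\|g\|_{L^p(\TTT^d)}=\|g\|_{L^p(\Omega)}$ because $\Omega$ is a fundamental domain, which is precisely the assertion. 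The endpoint $p=\infty$ is not reached by the subcube argument, but is immediate, since $L^{1/p}\lambda^{-1/p}\to 1$ as $p\to\infty$ while both $\|fg(\lambda\cdot)\|_{L^\infty(\Omega)}$ and $\|f\|_{L^\infty(\Omega)}\|g\|_{L^\infty(\Omega)}$ are bounded by $\|f\|_{C^1(\Omega)}\|g\|_{L^\infty(\TTT^d)}$. I do not expect a genuine obstacle here — this is a bookkeeping statement — the only things to get right being the exactness of the cube-by-cube decoupling of $g(\lambda\cdot)$ (which is what forces the hypothesis that $\Omega$ be exactly one period), the uniform control of $(|a|+|b|)^{p-1}$, and honest tracking of the powers of $L$ through the change of variables; note also that when $\lambda\le L$ the inequality is trivial, since each term on its left side is then already $\lesssim\|f\|_{C^1(\Omega)}\|g\|_{L^p(\TTT^d)}\le L^{1/p}\lambda^{-1/p}\|f\|_{C^1(\Omega)}\|g\|_{L^p(\TTT^d)}$.
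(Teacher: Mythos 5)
Your proposal is correct and follows essentially the same route as the proof the paper relies on (the paper itself only cites \cite{1Cheskidov, MoS} for this lemma): decompose $\Omega$ into the $\lambda^d$ period cells of $g(\lambda\cdot)$, freeze the slowly varying factor $f$ on each cell, control the replacement and Riemann-sum errors by $\tfrac{L}{\lambda}\|f\|_{C^1}^p\|g\|_{L^p}^p$, and take $p$-th roots via $|A^{1/p}-B^{1/p}|\le|A-B|^{1/p}$, with the hypothesis that $\Omega$ is exactly one fundamental domain doing the decoupling. Your explicit adoption of averaged $L^p$-norms (so that $\|g\|_{L^p(\Omega)}=\|g\|_{L^p(\TTT^d)}$ and the prefactor comes out as $L^{1/p}\lambda^{-1/p}$) is the right reading of the paper's somewhat loosely normalized statement, and the remaining details (uniformity in $p$, the trivial $p=\infty$ endpoint) are handled correctly.
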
}
\begin{proposition}\label{weak-glue-weak}For  $0<T_1<T_2$, let $u_1$ and $u_2$ be weak solutions of the equations \eqref{NS} respectively on $[0, T_1]$ and $[T_1, T_2]$ in a sense of Definition \ref{def} with $u_1(0)\in L^2(\R^3)$ and $u_1(T_1)=u_2(T_1)\in L^2(\R^3)$.  Then $u$  defined by
\begin{align*}
    u(t)=u_1(t), \quad \text{for}\,\,t\in[0, T_1], \quad\quad  u(t)=u_2(t), \quad \text{for}\,\,t\in[T_1, T_2]
\end{align*}
is a weak solution of \eqref{NS} in  Definition \ref{def} on  $[0, T_2]$.
\end{proposition}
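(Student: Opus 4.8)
The plan is to verify directly that the glued function $u$ satisfies the integral identity in Definition \ref{def} on $[0,T_2]$, by splitting the time integral at $T_1$ and using the individual weak formulations for $u_1$ on $[0,T_1]$ and $u_2$ on $[T_1,T_2]$. First I would fix a divergence-free test function $\phi\in\mathcal{D}_T$ with $T=T_2$ and, for small $\eta>0$, introduce a smooth temporal cutoff $\chi_\eta(t)$ which equals $1$ on $[0,T_1-\eta]$ and $0$ on $[T_1,T_2]$, so that $\phi\chi_\eta$ is an admissible test function for the $u_1$-problem on $[0,T_1]$ and $\phi(1-\chi_\eta)$ is — after a further cutoff near $T_1$ from the left is handled — essentially a test function on $[T_1,T_2]$. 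Applying the two weak formulations and adding them, the bulk terms $\int\!\!\int (\partial_t-\Delta)\phi\cdot u + \nabla\phi:u\otimes u$ reassemble over $[0,T_2]$, the contribution at $t=0$ gives $-\int_{\R^3}u_0\phi(0,\cdot)$, and the terms generated by $\partial_t\chi_\eta$ near $t=T_1$ are the only thing left to control.

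The key point is that these error terms collapse to a boundary contribution at $t=T_1$: since $\chi_\eta'$ is supported in $[T_1-\eta,T_1]$ and integrates to $-1$, the term $\int_{T_1-\eta}^{T_1}\!\int_{\R^3}\chi_\eta'(t)\,\phi\cdot u_1\,\dd x\,\dd t$ should converge as $\eta\to0$ to $-\int_{\R^3}\phi(T_1,x)\cdot u_1(T_1,x)\,\dd x$, and symmetrically the analogous term from the $u_2$-side converges to $+\int_{\R^3}\phi(T_1,x)\cdot u_2(T_1,x)\,\dd x$; these cancel precisely because $u_1(T_1)=u_2(T_1)$ in $L^2(\R^3)$. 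To make the limit rigorous I would note $u_1\in L^2([0,T_1]\times\R^3)$ implies $t\mapsto \int u_1(t)\phi(t)\,\dd x$ is at least in $L^2$ in time, but I actually need a notion of the trace $u_1(T_1)$; this is where I would invoke that the weak formulation itself forces $t\mapsto\int u_1(t)\cdot\psi\,\dd x$ to agree a.e. with a continuous (indeed $C^1$, after integrating the equation) function of $t$ for every fixed Schwartz divergence-free $\psi$, so that $u_1(T_1)$ is well-defined as a weak limit and the hypothesis $u_1(T_1)=u_2(T_1)$ makes sense and can be used; a density argument in $\psi$ then upgrades this to the full test function $\phi(T_1,\cdot)$.

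I would organize the write-up as: (i) reduce to showing the two cutoff error terms sum to zero in the limit; (ii) establish weak continuity in time of $t\mapsto\int u_i(t)\phi\,\dd x$ from the respective weak formulations (this is the standard fact that a weak solution has a representative in $C_w$ of time valued in a suitable space, obtained by testing against time-independent $\psi$ and reading off that the distributional time derivative of $\int u_i\psi$ is a function/measure); (iii) pass to the limit $\eta\to0$ using dominated convergence, the $L^2$ bounds, and weak continuity; (iv) conclude by matching $u_1(T_1)=u_2(T_1)$. The divergence-free condition (1) for $u$ is immediate since it holds a.e.\ for $u_1$ and $u_2$ separately and the time intervals overlap only at the single point $T_1$.

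The main obstacle I anticipate is purely the trace/weak-continuity issue at $t=T_1$: one has to be careful that ``$u_i(T_1)$'' is a legitimate object and that the limiting identity indeed produces exactly $\mp\int_{\R^3}\phi(T_1,\cdot)\cdot u_i(T_1,\cdot)\,\dd x$ with no leftover terms. Everything else — splitting the integral, verifying admissibility of the cutoff test functions, and reassembling the bulk and $t=0$ terms — is routine bookkeeping. A minor secondary point is ensuring $\phi\chi_\eta$ and $\phi(1-\chi_\eta)$ genuinely lie in the test-function classes $\mathcal{D}_{T_1}$ and $\mathcal{D}_{[T_1,T_2]}$ (divergence-free is preserved since $\chi_\eta$ depends only on $t$, and Schwartz regularity and vanishing at the right endpoints are clear), so no additional mollification in $x$ is needed.
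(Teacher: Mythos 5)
Your proposal follows essentially the same route as the paper's proof: multiply the test function by a temporal cutoff near $T_1$, apply the two weak formulations on $[0,T_1]$ and $[T_1,T_2]$, and show that the error terms produced by the time derivative of the cutoff converge to boundary contributions at $t=T_1$ which cancel exactly because $u_1(T_1)=u_2(T_1)$. The only (minor) difference is how the continuity of $t\mapsto\int_{\R^3}u_i\,\phi\,\dd x$ at $T_1$ is justified: you extract weak time continuity directly from the weak formulation, whereas the paper invokes the equivalence with the integral equation from Fabes--Jones--Rivi\`ere together with the Lebesgue differentiation theorem — both serve the same purpose.
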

\begin{proof}
     Let $\eta(t)\in C^{\infty}(\R)$ such that $\eta (t)\equiv1$ for $t\le -1$ and $\eta(t)\equiv 0$ for $t\ge 0$. Suppose that $\epsilon<T_1$,  we define $\eta^{(1)}_{\epsilon}(t)=\eta(\frac{t-T_1}{\epsilon})$ and $\eta^{(2)}_{\epsilon}(t)=\eta^{(1)}_{\epsilon}(-t)$. For any  $\phi\in \mathcal{D}_T$, since $u_1$ is a weak solution of \eqref{NS} on $[0, T_1]$ and $u\equiv u_1~{\rm on}~[0, T_1]$, we have
 \begin{align}\nonumber
\int_0^{T_1} \int_{\R^3} \big((\del_t-\Delta)\,( \eta^{(1)}_{\epsilon}(t) \phi )\big) u+\nabla (\eta^{(1)}_{\epsilon}(t)\phi): u\otimes u \dd x \dd t=  -\int_{\R^3} u_1(0) \phi(0,x)\dd x.
\end{align}
Noting the support of $\eta^{(1)}_{\epsilon}(t)$, the integration region in the above equality can be extended to $[0, T_2]$. Similarly, since  $u_2$ is a weak solution of \eqref{NS} on $[T_1, T_2]$ and $u\equiv u_2~{\rm on}~[T_1, T_2]$, one obtains that
 \begin{align}\nonumber
\int_{T_1}^{T_2} \int_{\R^3} \big((\del_t-\Delta)\,( \eta^{(2)}_{\epsilon}(t) \phi ) \big)u+\nabla (\eta^{(2)}_{\epsilon}(t)\phi) : u\otimes u \dd x \dd t=0,
\end{align}
where the  integration region can be extended to $[0, T_2]$. Collecting these equalities together shows that
\begin{align}\nonumber
&\int_{0}^{T_2} \int_{\R^3} \big((\del_t-\Delta)\, ((\eta^{(1)}_{\epsilon}(t)+\eta^{(2)}_{\epsilon}(t)) \phi) \big) u+\nabla ((\eta^{(1)}_{\epsilon}(t)+\eta^{(2)}_{\epsilon}(t))\phi) : u\otimes u \dd x \dd t\nonumber\\
=&-\int_{\R^3} u_1(0) \phi(0,x)\dd x.\nonumber
\end{align}
A direct computation yields that
\begin{align*}
&\,\,\quad\int_{0}^{T_2}  \int_{\R^3} \big((\del_t-\Delta)\, ((\eta^{(1)}_{\epsilon}(t)+\eta^{(2)}_{\epsilon}(t)) \phi) \big) u\dd x \dd t\\
&=\int_{0}^{T_2} \int_{\R^3}  ((\del_t-\Delta)\,\phi ) u (\eta^{(1)}_{\epsilon}(t)+\eta^{(2)}_{\epsilon}(t)) \dd x \dd t+\int_{0}^{T_2}  \int_{\R^3}  \phi  u \,\del_t (\eta^{(1)}_{\epsilon}(t)+\eta^{(2)}_{\epsilon}(t)) \dd x \dd t\\
&=:{\rm I+II}.
\end{align*}
For {\rm I}, owning to $u\in L^2([0,T_2]\times \R^3)$ and
\begin{align*}
    \eta^{(1)}_{\epsilon}(t)+\eta^{(2)}_{\epsilon}(t)\to 1\,\,\text{in}\,\, L^2, \,\,\text{as}\,\,\epsilon\to 0,
\end{align*}
one deduces that
\begin{align*}
{\rm I}\to\int_{0}^T \int_{\mathbb T^3} (\del_t-\Delta)\phi\, u\dd x \dd t, \quad \text{as}\quad\epsilon \to 0.
\end{align*}
For {\rm II}, by the definitions of $\eta^{(1)}_{\epsilon}(t)$ and $\eta^{(2)}_{\epsilon}(t)$, we have
\begin{align*}
{\rm II}=\frac{1}{\epsilon}\int_{T_1-\epsilon}^{T_1}  \eta'\Big(\frac{t-T_1}{\epsilon}\Big)\int_{\R^3} u_1\phi\dd x \dd t-\frac{1}{\epsilon}\int_{T_1}^{T_1+\epsilon} \eta'\Big(\frac{t-T_1}{\epsilon}\Big)\int_{\R^3} u_2\phi\dd x \dd t.
\end{align*}
By \cite[Theorem 2.1]{FJR},  the  definition of weak solutions in Definition \ref{def} is equivalent to the integral equation, from which we infer that $\int_{\R^3}u_1\phi \dd x\in C([0,T_1])$ and $\int_{\R^3}u_2\phi \dd x\in C([T_1,T_2])$. Therefore, $T_1$  is a Lebesgue point for $\int_{\R^3}u_1\phi \dd x$ and $\int_{\R^3}u_2\phi \dd x$.  By Lebesgue differentiation theorem, we have
\begin{align*}
{\rm II}\to \eta'(0) \int_{\R^3} u_1(T_1)\phi(T_1)\dd x \dd t- \eta'(0)\int_{\R^3}u_2(T_1)\phi(T_1)\dd x \dd t=0, \quad \text{as}\quad \epsilon\to 0.
\end{align*}
Thanks to
\begin{align*}
\eta^{(1)}_{\epsilon}(t)+\eta^{(2)}_{\epsilon}(t)\to 1, \forall t\in \R, \quad \text{and}  \quad  u\otimes u\in L^1([0,T]\times\R^3),
\end{align*}
one deduces by dominated convergence theorem that
\begin{align*}
&\int_{0}^{T_2} \int_{\mathbb T^3}\nabla \big((\eta^{(1)}_{\epsilon}(t)+\eta^{(2)}_{\epsilon}(t))\phi\big) : u\otimes u\dd x \dd t\\
=&\int_{0}^{T_2}  \int_{\mathbb T^3}(\eta^{(1)}_{\epsilon}(t)+\eta^{(2)}_{\epsilon}(t))\nabla \phi : u\otimes u\dd x \dd t
\to \int_{0}^T \int_{\mathbb T^3}\nabla \phi : u\otimes u \dd x \dd t, \,\, \text{as}\quad\epsilon \to 0.
\end{align*}
Hence, we obtain that
\begin{align*}
&\int_{0}^{T_2} \int_{\R^3} \big((\del_t-\Delta)\,( (\eta^{(1)}_{\epsilon}(t)+\eta^{(2)}_{\epsilon}(t)) \phi )\big) u+\nabla ((\eta^{(1)}_{\epsilon}(t)+\eta^{(2)}_{\epsilon}(t))\phi) : u\otimes u\dd x \dd t\\
\to &\int_0^{T_2}\int_{\R^3} ((\del_t-\Delta)\, \phi ) u+\nabla \phi : u\otimes u \dd x \dd t\quad \text{as}\quad \epsilon\to 0.
\end{align*}
In conclusion, one  has
\begin{align*}
\int_0^{T_2}\int_{\R^3} ((\del_t-\Delta)\, \phi ) u+\nabla \phi : u\otimes u\dd x \dd t=-\int_{\R^3} u_1(0) \phi(0,x)\dd x.
\end{align*}
\end{proof}

{Next, we give the temporal and spatial mollifiers  which we will use in Section 4.
\begin{definition}[Mollifiers]\label{e:defn-mollifier-t}Let nonnegative functions $\varphi(t)\in C^\infty_c(-1,0)$ and $\psi(x)\in C^\infty_c(B_1(0))$ be standard mollifying kernels such that $\int_{\R}\varphi(t)\dd t=\int_{\R^3}\psi(x)\dd x=1$.
For each $\epsilon>0$, we define  two sequences of  mollifiers as follows:
\begin{align*}
     \varphi_{\epsilon}(t)
        := \frac1{\epsilon} \varphi\left(\frac t\epsilon\right), \quad\psi_\epsilon(x)
            :=  \frac1{\epsilon^3} \psi\left(\frac{x}\epsilon\right).
\end{align*}
\end{definition}
In this paper, we will introduce the incompressible perturbation  fluid  in the following mixed time-spatial Besov spaces, the so-called Lerner-Chemin spaces.
\begin{definition}[\cite{BCD11, MWZ}]Let $T>0$, $s\in\R$ and $1\le r,p,q\le\infty$. The mixed  time-spatial Besov spaces ${L}^r_TB^s_{p,q}$ consists of all $u\in\mathcal{S}'$ satisfying
\begin{align*}
\|u\|_{\widetilde{L}^r_TB^s_{p,q}(\R^d)}\overset{\text{def}}{=}\Big{\|}(2^{js}\|\Delta_j u\|_{L^r([0,T];L^p(\R^d))})_{j\in\ZZ}\Big{\|}_{\ell^q(\ZZ)}<\infty,
\end{align*}
where $\Delta_j$ is localization nonhomogeneous operator from the Littlewood-Paley decomposition theory. Particularly, $H^s(\R^d)\sim B^s_{2,2}(\R^d)$.
\end{definition}
We present a result describing  the smoothing effect of the heat flow  in the context of Besov spaces.
\begin{lemma}[\cite{BCD11, MWZ}]\label{heat}Let $s\in \R$ and $1\le r_1, r_2, p, q\le \infty$ with $r_2\le r_1$. Consider the heat equation
\begin{align*}
\partial_t u-\Delta u=f,\qquad
  u(0,x)=u_0(x).
  \end{align*}
Assume that $u_0\in  B^s_{p,q}(\R^d)$ and $f\in \widetilde {L}^{r_2}_t( B^{s-2+\frac{2}{r_2}}_{p,q}(\R^d))$. Then the above equation has a unique solution $u\in \widetilde{L}^{r_1}_t( \dot B^{s+\frac{2}{r_1}}_{p,q}(\R^d))$
satisfying
\[\|u\|_{\widetilde{L}^{r_1}_T(  B^{s+\frac{2}{r_1}}_{p,q}(\R^d))}\le C(1+T)\big(\|u_0\|_{ B^s_{p,q}(\R^d)}+\|f\|_{\widetilde{L}^{r_2}_T( \dot B^{s-2+\frac{2}{r_2}}_{p,q}(\R^d))}\big),\]
where $C$ is a universal constant.
\end{lemma}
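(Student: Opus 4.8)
\emph{Proof proposal.} The plan is to write the solution by Duhamel's formula
\begin{align*}
u(t) = e^{t\Delta}u_0 + \int_0^t e^{(t-s)\Delta}f(s)\dd s,
\end{align*}
and to estimate each Littlewood--Paley block $\Delta_j u$ separately, exploiting the exponential smoothing of the heat semigroup at frequency $2^j$. The one genuinely analytic input is the frequency-localized kernel bound: there exist $c,C>0$ such that for every $j\in\ZZ$, every $t\ge 0$, and every tempered distribution $g$,
\begin{align*}
\|e^{t\Delta}\Delta_j g\|_{L^p(\R^d)} \le C e^{-c 2^{2j}t}\,\|\Delta_j g\|_{L^p(\R^d)},
\end{align*}
with the convention that for the low-frequency block $j=-1$ (where $\Delta_{-1}$ localizes to a ball) the factor $e^{-c2^{2j}t}$ is replaced by $1$. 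This is classical: one writes $e^{t\Delta}\Delta_j$ as convolution against $2^{jd}\mathcal{K}(2^j\cdot,2^{2j}t)$ for a fixed Schwartz kernel $\mathcal{K}$ and bounds its $L^1$ norm uniformly; I would simply invoke \cite{BCD11}. Once this is in hand, one defines $u$ by the Duhamel formula, checks directly that it solves the equation in $\mathcal{S}'$, and the remaining content is the a priori bound.

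Applying $\Delta_j$ to Duhamel's formula gives, pointwise in $t$,
\begin{align*}
\|\Delta_j u(t)\|_{L^p} \le C e^{-c2^{2j}t}\|\Delta_j u_0\|_{L^p} + C\int_0^t e^{-c2^{2j}(t-s)}\|\Delta_j f(s)\|_{L^p}\dd s.
\end{align*}
For the homogeneous part, taking the $L^{r_1}(0,T)$ norm in $t$ and using $\|e^{-c2^{2j}\cdot}\|_{L^{r_1}(0,\infty)}\lesssim 2^{-2j/r_1}$ for $j\ge 0$, together with the trivial bound $\|1\|_{L^{r_1}(0,T)}\le (1+T)$ for $j=-1$, yields $2^{j(s+2/r_1)}\|e^{\cdot\,\Delta}\Delta_j u_0\|_{L^{r_1}_TL^p}\lesssim (1+T)\,2^{js}\|\Delta_j u_0\|_{L^p}$. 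For the Duhamel part, extend $\|\Delta_j f(\cdot)\|_{L^p}$ by zero outside $[0,T]$ and view the $s$-integral as a convolution on $\R$ with kernel $e^{-c2^{2j}\cdot}\mathbf{1}_{(0,\infty)}$; Young's inequality in time with $\tfrac1{r_1}=\tfrac1a+\tfrac1{r_2}-1$ (admissible because $r_2\le r_1$ forces $a\in[1,\infty]$) and $\|e^{-c2^{2j}\cdot}\|_{L^a(0,\infty)}\lesssim 2^{-2j/a}=2^{-2j(1+1/r_1-1/r_2)}$ for $j\ge 0$, plus the bound by $(1+T)$ for $j=-1$, give $2^{j(s+2/r_1)}\|\mathrm{Duhamel}_j\|_{L^{r_1}_TL^p}\lesssim (1+T)\,2^{j(s-2+2/r_2)}\|\Delta_j f\|_{L^{r_2}_TL^p}$. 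Summing the two contributions, taking the $\ell^q(\ZZ)$ norm in $j$, and applying Minkowski's inequality in $\ell^q$ produces exactly
\begin{align*}
\|u\|_{\widetilde L^{r_1}_T B^{s+2/r_1}_{p,q}} \le C(1+T)\big(\|u_0\|_{B^s_{p,q}} + \|f\|_{\widetilde L^{r_2}_T B^{s-2+2/r_2}_{p,q}}\big).
\end{align*}
Uniqueness is then immediate: the difference $w$ of two solutions with the same data solves the heat equation with zero initial value and zero forcing, so the bound just proved forces $\|w\|_{\widetilde L^{r_1}_T B^{s+2/r_1}_{p,q}}=0$.

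The only points requiring care beyond bookkeeping are the endpoint exponents in Young's inequality — one must check $1+\tfrac1{r_1}-\tfrac1{r_2}\in[0,1]$ in all cases and handle $a=\infty$ (which occurs when $r_2=1$, $r_1=\infty$) via the $L^\infty$ bound on the kernel directly — and the separate treatment of the low-frequency block $j=-1$, which is precisely what forces the factor $(1+T)$ rather than a $T$-independent constant. I expect this low-frequency bookkeeping, not the high-frequency estimate, to be the main place where one has to be attentive, since everything else is a routine application of the heat-kernel decay estimate and Young's inequality block by block.
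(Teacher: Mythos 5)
Your proposal is correct and is essentially the standard argument: the paper does not prove this lemma at all but cites \cite{BCD11, MWZ}, and the proof given there is exactly your route — Duhamel's formula, the frequency-localized decay $\|e^{t\Delta}\Delta_j g\|_{L^p}\lesssim e^{-c2^{2j}t}\|\Delta_j g\|_{L^p}$, Young's inequality in time block by block, and separate treatment of the low-frequency block which produces the $(1+T)$ factor. The only point to state a little more carefully is uniqueness: the a priori bound applies to an arbitrary solution in the class only after one checks (blockwise, as is standard for the heat equation) that every such distributional solution coincides with the mild solution given by Duhamel's formula.
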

\noindent {\bf{Notation}}\, For a $\TTT^d$-periodic function $f$, we denote
\begin{align*}
\mathbb{P}_{=0} f:=\widehat{f}(0)=\frac{1}{|\TTT^d|}\int_{\TTT^d} f(x)\dd x,\quad\PP f=f-\mathbb{P}_{=0} f.
\end{align*}
In the following, the notation $x\lesssim y$ means $x\le Cy$ for a universal constant that may change from line to line. We use the symbol $\lesssim_N$ to express that the constant in the inequality depends on the parameter $N$. Without ambiguity, we will denote $L^m([0,T];Y(\R^3))$ and $L^m([0,T];L^m(\R^3))$ by $L^m_t Y$ and $L^m_{t,x}$ respectively. }
\section{ Induction scheme}
\subsection{Parameters}\label{para}First of all, we introduce several parameters throughout this paper. Fixed $0<T<1$ with $T\in\QQ$, $ N_{\Lambda}$ and $M$ be positive integer and  $1\le p<2$,  let $K$ be integer number with $K>\max\{\frac{4}{T},2M\}$. We define $\epsilon_0,\sigma_0\in\QQ$ and $\alpha$ be positive constants
\begin{align}\label{epsilon}
2\epsilon_0\le \min\big\{2^{-12}, \tfrac{2}{p}-1\big\},\quad \sigma_0\le \frac{\epsilon_0}{20}, \quad  \alpha<\frac{\sigma_0}{100}.
\end{align}
Let $b\in \NN$ with $b(1-\epsilon_0)\in \NN$,  $b\sigma_0\in \NN$ and
\begin{align}\label{b-beta}
b\ge\frac{2^{20}}{\alpha}, \quad \beta=2^{-20}\alpha b^{-1}.
\end{align}
Suppose that $a\in \NN$ satisfying that $\frac{a}{T}\in 4\NN$ and $a>\max\{K,N_{\Lambda}\}$. We define
\begin{align}\label{def-lq}
    \lambda_q :=    a^{b^q},  \quad \delta_q :=  \lambda_q^{-2\beta}, \quad \ell_q:=\lambda^{-50}_q,\quad q\ge 0,
\end{align}
and
\begin{align}\label{omega}
\Omega_{q}:=\big[-\tfrac{K}{2} +(\lambda_{q-1}\delta^{1/2}_{q-1})^{-1},~
 \tfrac{K}{2} -(\lambda_{q-1}\delta^{1/2}_{q-1})^{-1}  \big]^3   ,~~~~q\geq1.
\end{align}
\subsection{Iterative procedure }\label{sec-ite}For given initial data $\uin\in H^3(\R^3)$, there exist $0<T<1$ and a smooth solution $u_1\in C([0,T]; H^3(\R^3))\cap L^2([0,T]; H^4(\R^3))$ of the equations \eqref{NS}.  We choose the constant $E$ such that
\begin{align}\label{E-u1}
 \int_{\frac{3}{4}T}^T\int_{\R^3}|u_1(x,t)|^2\dd x\dd t+2\delta_2\le E\le \int_{\frac{3}{4}T}^T\int_{\R^3}|u_1(x,t)|^2\dd x\dd t+4\delta_2.
\end{align}
Furthermore, for given positive constant $K$, we define the spatial cut-off function $\chi_K\in C^\infty(\R^3; [0,1])$ such that
\begin{align*}
\chi_K(x)=1, \text{if}\,\,|x|\le \frac{K}{8} \,\,\text{and}\,\,  \chi(x)=0, \text{if}\,\,|x|\ge \frac{K}{4}.
\end{align*}
Then we decompose $u_1$ by
\begin{align*}
    u_1=u_1 \chi_K+u_1(1-\chi_K):=u^{\loc}_1+u^{\text{non-loc}}_1.
\end{align*}
Owing to $u_1\in C([0, T]; H^3(\R^3))$,  we have
\begin{align}\label{u1-L^2}
   \|u_1\|_{L^2([0,T];L^2(\R^3)))\cap L^p([0,T];L^\infty(\R^3)))}\le \frac{M}{4}
\end{align}
for a large enough constant $M$ and there exists a large enough integer $K$ such that
\begin{align}\label{u1nl}
\|u^{\text{non-loc}}_1\|_{\widetilde{L}^{\infty}([0,T];B^{1/2}_{2,1}(\R^3))\cap \widetilde{L}^{1}([0,T];B^{5/2}_{2,1}(\R^3)) }\le \frac{1}{2M},
\end{align}

To employ induction, we suppose that the solution $(u_q, p_q, \RR_q)$ of the equations \eqref{NSR} on $[0, T]\times \R^3$ satisfies the following conditions:
\begin{align}
&\uq=\uql+\uqnl,
    \label{uq-tigh}\\
 &\|\uq\|_{L^2_{t,x}}\leq M(1- \delta^{1/2}_q),  \quad\,\,\|\uq\|_{L^\infty_tH^3}  \le  \lambda^5_q,
    \label{e:vq-H3}\\
&\|\uql\|_{L^2_{t,x}\cap L^p_tL^{\infty}}\leq\frac{M}{2}(1-\delta^{1/2}_q), \quad {\text{supp}_{x}}   \uql \subseteq \Omega_q   ,\label{e:vq-C0}\\
    &\|\uqnl\|_{\widetilde{L}^{\infty}_tB^{3}_{2,2}}  \le  \lambda^5_q ,\,\quad\|\uqnl\|_{\widetilde{L}^{\infty}_tB^{1/2}_{2,1}\cap \widetilde{L}^{1}_tB^{5/2}_{2,1} }\le M^{-1}+\sum_{k=2}^{q} \delta_{k+1}\lambda^{-6\alpha}_k,
    \label{e:vqnl-H3}\\
 & 2\delta_{q+1}\le E-\int_{\frac{3}{4}T}^T\int_{\R^3}|\uq|^2\dd x\dd t\le 4\delta_{q+1}, \label{e:E-q}\\
&   {\text{supp}_{x}} \RR_q  \subseteq \Omega_q, \qquad\qquad\quad\,\,\RR_q(t,x)=0, \forall\, t\in [0, \tfrac{T}{4}+4\lambda^{-1}_{q-1}],\label{supp-Rq}\\
    & \| \RR_q \|_{L^1_{t,x}}\le \delta_{q+1}\lambda_q^{-4 \alpha},  \quad\quad\quad\| \RR_q \|_{L^{\infty}_{t}W^{3,1}} \le  \lambda_q^{5},
    \label{e:RR_q-C0}
\end{align}
where $M$ and $E$ are consistent with these in \eqref{E-u1}--\eqref{u1nl}.   The following proposition shows that there exists a solution  $(u_{q+1}, p_{q+1}, \RR_{q+1})$ of the equations~\eqref{NSR}
satisfying the above inductive conditions \eqref{uq-tigh}--\eqref{e:RR_q-C0} with $q$ replaced by $q+1$, which guarantees the iteration proceeds successfully. Indeed,  by employing the iterative proposition as presented below, we are able to prove Theorem ~\ref{t:main0}.
{\begin{proposition}\label{iteration}
\label{p:main-prop}Let $1\le p<2$ and the parameters $M,T,\lambda_q,\delta_q,\Omega_q$ be as in \eqref{def-lq}--\eqref{u1nl}. Then there exist  a  universal constant $C_0$ and $a_0$ such that for $a>a_0$, the following holds.
Assume that $(u_q,p_q,\RR_q )$ solves
\eqref{NSR} with $q\ge1$ . Then there exists a solution $ (u_{q+1},  p_{q+1}, \RR_{q+1})$ of the equations \eqref{NSR} on $[0,T]$, satisfying \eqref{uq-tigh}--\eqref{e:RR_q-C0} with $q$ replaced by $q+1$, and such that
\begin{align}
&u_{q+1}(t)=u_q(t), \quad \,\forall t\in [0, \tfrac{T}{4}+4\lambda^{-1}_{q}], \label{uq+1=uq}\\
& \|u_{q+1} - u_q\|_{L^2_{t,x}\cap L^p_tL^{\infty}_x} \leq C_0\delta_{q+1}^{1/2}.\label{uq+1-uq}
\end{align}
\end{proposition}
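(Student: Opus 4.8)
The plan is to run one step of the convex-integration iteration, building the new triple $(u_{q+1},p_{q+1},\RR_{q+1})$ out of the three-piece perturbation $w_{q+1} = \wpq + \wcq + \wtq$ announced in the introduction. First I would carry out the preparatory mollification-and-gluing: mollify $u_q$ (and $\RR_q$) at scale $\ell_q = \lambda_q^{-50}$ in space and time to obtain $u_{\ell_q}$ with the higher-regularity bounds $\|u_{\ell_q}\|_{C^N_{t,x}} \lesssim \lambda_q^5 \ell_q^{-N}$, then glue $u_q$ and $u_{\ell_q}$ together through a temporal cutoff supported away from $[0,\tfrac{T}{4}+4\lambda_q^{-1}]$ to produce $\bar u_q$; this is exactly what preserves \eqref{uq+1=uq} and keeps $\RR_q$ vanishing on the initial slab as in \eqref{supp-Rq}. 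The gluing must respect the decomposition $\bar u_q = \bar u^{\loc}_q + \bar u^{\text{non-loc}}_q$ so that the localized part still has spatial support in $\Omega_q$ (shrunk slightly to $\Omega_{q+1}$), using that $\chi_K$ and the mollifiers do not enlarge supports beyond a fixed constant.

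Next I would define the principal perturbation. Applying the Geometric Lemma (Lemma \ref{first S}) to $\mathrm{Id} - \RR_q/(\text{something like }\rho)$, where $\rho$ is chosen comparably to $\delta_{q+1}\lambda_q^{-4\alpha}$ plus a bump to make the argument positive-definite and to absorb the energy-pumping needed for \eqref{E-u1}/\eqref{e:E-q}, I get amplitude functions $a_k$. The principal piece $\wpq$ is then $\sum_{k\in\Lambda} a_k(\cdots)\,\chi^{loc}\,\mathbf{W}_k(\sigma x)\,g(\mu t)$ built from intermittent \emph{shear} flows $\mathbf{W}_k$ concentrated in the $\bar k$-direction with oscillation in the perpendicular direction, times a temporal concentration function $g$ oscillating at frequency $\mu \gg \sigma \gg \lambda_q$; the spatial cutoff $\chi^{loc}$ forces compact support inside $\Omega_{q+1}$. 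The corrector $\wcq$ restores $\Div(\wpq+\wcq)=0$ and is lower-order in $L^2_{t,x}$ and $L^p_tL^\infty_x$ by the usual $\sigma^{-1}$ gain, which is where \eqref{uq+1-uq} and the $L^2$ pieces of \eqref{e:vq-H3}, \eqref{e:vq-C0} come from after choosing the intermittency exponents so that $\|\wpq\|_{L^2_{t,x}} \sim \delta_{q+1}^{1/2}$, $\|\wpq\|_{L^p_tL^\infty_x}\to 0$ (using $p<2$ together with $2\epsilon_0 \le \tfrac2p-1$), and $\|\wpq\|_{L^\infty_tH^3}\lesssim \lambda_{q+1}^5$. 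Then I compute the error: the low frequency of $\wpq\otimes\wpq$ cancels $\RR_q$ up to the energy bump, leaving $\Div E_{q+1}$ (oscillation error, Nash/transport errors from $\bar u^{\loc}_q$, corrector errors — all compactly supported) plus a non-divergence remainder $F_{q+1}$ coming from the shear structure (the $\del_t g$ term and the terms linear in $\bar u^{\text{non-loc}}_q$), which by the perpendicularity of oscillation to flow direction one shows is small in $\widetilde L^1_t B^{1/2-2}_{2,1}$-type norms.

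The genuinely new step — and the main obstacle — is then constructing the \emph{incompressible perturbation fluid} $\wtq$. I would solve an incompressible Navier–Stokes system for $\wtq$ starting from $\wtq(\tfrac T4 + \cdots) \equiv 0$, with forcing chosen so that $\del_t\wtq - \Delta\wtq + \Div(\wtq\otimes\wtq) + \Div(\wtq\otimes \bar u^{\text{non-loc}}_q) + \Div(\bar u^{\text{non-loc}}_q\otimes\wtq) + \nabla p_t = -F_{q+1}$ (mod gradient). Using Lemma \ref{heat} (the heat-smoothing estimate in Lerner–Chemin spaces) together with the smallness of $\bar u^{\text{non-loc}}_q$ from \eqref{u1nl}/\eqref{e:vqnl-H3}, a fixed-point / Picard iteration in $\widetilde L^\infty_t B^{1/2}_{2,1}\cap \widetilde L^1_t B^{5/2}_{2,1}$ gives existence, uniqueness and the bound $\|\wtq\|_{\widetilde L^\infty_t B^{1/2}_{2,1}\cap \widetilde L^1_t B^{5/2}_{2,1}} \lesssim \|F_{q+1}\|_{\widetilde L^1_t B^{-3/2}_{2,1}} \le \delta_{q+2}\lambda_{q+1}^{-6\alpha}$ (the hard estimate is making $F_{q+1}$ this small, which forces careful tuning of $\mu$ against $\sigma$ and of $\beta$ against $\alpha,b$ exactly as in \eqref{b-beta}); the high-regularity bound $\|\wtq\|_{\widetilde L^\infty_t B^3_{2,2}}\lesssim\lambda_{q+1}^5$ follows from the same lemma with $s=3$. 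Since $\wtq$ solves exact Navier–Stokes, it contributes \emph{nothing} to $\RR_{q+1}$ beyond cancelling $F_{q+1}$, so the new Reynolds stress is $\RR_{q+1} = $ the remaining (compactly supported, divergence-form) parts of $E_{q+1}$, and I bound $\|\RR_{q+1}\|_{L^1_{t,x}}\le \delta_{q+2}\lambda_{q+1}^{-4\alpha}$ and $\|\RR_{q+1}\|_{L^\infty_t W^{3,1}}\lesssim\lambda_{q+1}^5$ by the standard stationary-phase / inverse-divergence estimates on the torus applied inside $\Omega_{q+1}$. Setting $u^{\loc}_{q+1} = \bar u^{\loc}_q + \wpq + \wcq$, $u^{\text{non-loc}}_{q+1} = \bar u^{\text{non-loc}}_q + \wtq$, and choosing the energy bump so that \eqref{e:E-q} holds with $q+1$, all of \eqref{uq-tigh}--\eqref{e:RR_q-C0}, \eqref{uq+1=uq}, \eqref{uq+1-uq} follow once $a$ (hence every $\lambda_q$) is taken larger than an absolute $a_0$ absorbing the implicit constants.
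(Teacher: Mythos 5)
Your proposal follows essentially the same route as the paper: mollification at scale $\ell_q$, gluing with a temporal cutoff to preserve \eqref{uq+1=uq}, a principal perturbation built from the geometric lemma with intermittent shear flows, temporal concentration and a compact-support cutoff, an incompressibility corrector, and an auxiliary incompressible fluid $\wtq$ solving a forced Navier--Stokes system (fixed point in $\widetilde L^\infty_t B^{1/2}_{2,1}\cap\widetilde L^1_t B^{5/2}_{2,1}$ via the heat-smoothing lemma and the smallness of the non-local part) that absorbs the non-divergence-form, non-compactly-supported remainder $F_{q+1}$, after which the remaining compactly supported divergence-form terms define $\RR_{q+1}$ and the energy pumping gives \eqref{e:E-q}. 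The only minor imprecisions are that in the paper $\wtq$ does contribute to $\RR_{q+1}$ through the (compactly supported, small) cross terms such as $\wtq\ootimes\ubql$ and $\wtq\ootimes\wpq$, and that the oscillation gain is obtained by explicit antidifferentiation of the periodic profiles rather than by any global inverse-divergence operator, consistent with your own compactly supported, divergence-form bookkeeping.
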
}
\subsection{Proof of Theorem \ref{t:main0}} Given a weak solution $u\in L^p([0, T_0]; L^\infty(\R^3))$ with initial data $u_0$,
there exists $t_0\in (0, T_0)$ such that $u(t_0)\in L^2(\R^3)\cap L^\infty(\R^3)$. By the well-posedness theory of the Navier-Stokes equations, we have a unique local mild solution $u_1$ on $[t_0,t_{\rm local}]\subseteq[0, T_0]$ with initial data $u_1(t_0)=u(t_0)$ and there exists $0<4\varepsilon<t_{\rm local}-t_0$ such that
$$u_1(t_0+\varepsilon)\in H^3(\R^3).$$  Let $u_1(t_0+\varepsilon)\in H^3(\R^3)$ be  the initial data $\uin$ as given in Section \ref{sec-ite}, then $u_1$ is a smooth solution of the equations \eqref{NS} on $[t_0+\varepsilon, t_0+\varepsilon+T]$ with some $T\le \min\{1, \frac{1}{2}(t_{\rm local}-t_0)\}$. Obviously, by taking $a$ large enough and combining \eqref{E-u1}--\eqref{u1nl}, $u_1$ satisfies \eqref{uq-tigh}--\eqref{e:E-q} and $\RR_1$ satisfies \eqref{supp-Rq} and \eqref{e:RR_q-C0} due $\RR_1=0$ at $q=1$.

Utilizing Proposition \ref{p:main-prop} inductively, we obtain a sequence of solutions $\{(u_q, p_q,\RR_q)\}$ of the equations \eqref{NSR} satisfying the inductive estimates \eqref{uq-tigh}--\eqref{e:RR_q-C0}. By the definition of $\delta_q$, one can easily deduce that $\sum_{i=2}^{\infty}\delta^{1/2}_{i}$ converges to a finite number. This fact combined with \eqref{uq+1-uq}
    implies that $\{u_q\}$ is a Cauchy sequence in $L^2([t_0+\varepsilon, t_0+\varepsilon+T]; L^2(\R^3))\cap L^p([t_0+\varepsilon, t_0+\varepsilon+T]; L^\infty(\R^3))$. Since  $\|\RR_q\|_{L^1_{t,x}}\rightarrow 0$ as $q\to\infty$,  the limit function $\widetilde v$  is a weak solution of \eqref{NS}  and satisfies
    \begin{align}
    &\widetilde v\in L^2([t_0+\varepsilon, t_0+\varepsilon+T]; L^2(\R^3))\cap L^p([t_0+\varepsilon, t_0+\varepsilon+T]; L^\infty(\R^3)),\label{weak-v-L^2}\\
    &\int_{t_0+\varepsilon+\frac{3}{4}T}^{t_0+\varepsilon+T}\int_{\R^3}|\widetilde v|^2\dd x\dd t=E.
    \end{align}
 Thanks to \eqref{uq+1=uq}, we have $\widetilde v(t_0+\varepsilon)=u_1(t_0+\varepsilon)$.

By virtue of \eqref{weak-v-L^2}, one infers that $\widetilde v(x, t)\in L^2(\R^3)$ for a.e. $t\in [t_0+\varepsilon, t_0+\varepsilon+T]$. Without loss of generality, we assume that $\widetilde v(x, t_0+\varepsilon+T)\in L^2(\R^3)$. Then there exists a Leray-Hopf solution $V$ of \eqref{NS} on $[t_0+\varepsilon+T, \infty)$, which is also a weak solution of \eqref{NS}  on $[t_0+\varepsilon+T, \infty)$ with initial data $\widetilde v(t_0+\varepsilon+T)$.

Finally, we construct a weak solution $v$ of \eqref{NS} on $[0, T_0]$ by letting
\begin{align*}
    &v(t)= u(t) \,\, \text{if}\,t\in[0,t_0]; \quad\quad\quad\quad\quad\quad v(t)= u_1(t) \,\, \text{if}\,t\in[t_0,t_0+\varepsilon];\\
    &v(t)=\widetilde v(t) \,\, \text{if}\,t\in [t_0+\varepsilon, t_0+\varepsilon+T]; \, \,\,v(t)=V(t)\,\, \text{if}\,t\in  [ t_0+\varepsilon+T, T_0].
\end{align*}
Note that $u$, $u_1$, $\widetilde{v}$ and $V$ are weak solutions of the  \eqref{NS} with $u(t_0)=\widetilde u(t_0)$, $\widetilde u(t_0+\varepsilon)=\widetilde v(t_0+\varepsilon)$ and $\widetilde v(t_0+\varepsilon+T)=V(t_0+\varepsilon+T)$, one could deduce that $v$ is a weak solutions of the equations~\eqref{NS} on $[0, T_0]$ by Proposition \ref{weak-glue-weak}.
Since the weak solution $v$ satisfies
\[\int_{t_0+\varepsilon+\frac{3}{4}T}^{t_0+\varepsilon+T}\int_{\R^3}| v(x,t)|^2\dd x\dd t=E,\]
where there exist infinitely many constant $E$ satisfying \eqref{E-u1},  we conclude that  $v\neq u$. Therefore, we finish the proof of Theorem \ref{t:main0}.
\section{Proof of Proposition \ref{iteration}} In this section, we are devoted to the proof of Proposition \ref{iteration}. More specifically, we construct  $(u_{q+1}, p_{q+1}, \RR_{q+1})$ in Proposition \ref{p:main-prop}  by the following three steps:
\begin{enumerate}
  \item [$\bullet$]Step 1: Mollification: $(\uq, p_q, \RR_q)\mapsto (u_{\ell_q},  p_{\ell_q}, \RR_{\ell_q})$. We define $(u_{\ell_q},  p_{\ell_q}, \RR_{\ell_q})$ by mollifying  $(\uq, p_q, \RR_q)$ so that  we  {obtain higher regularity estimates for the perturbation.}
  \item [$\bullet$]Step 2: Gluing:  $(u_{\ell_q},  p_{\ell_q}, \RR_{\ell_q})\mapsto (\vv_q, \pp_q, \RRR_q)$. We introduce $\vv_q$ by gluing $\uq$ and  $u_{\ell_q}$, which is employed to achieve the condition \eqref{uq+1=uq}.
 \item [$\bullet$]Step 3: Perturbation: $ (\vv_q,  \pp_q, \RRR_q)\mapsto (u_{q+1}, p_{q+1}, \RR_{q+1})$. By making use of the shear intermittent flows, the temporal concentration functions and incompressible  perturbation fluid, we construct the perturbation $w_{q+1}$. Then we define $u_{q+1}$ by adding $w_{q+1}$ on~$\vv_q$.
\end{enumerate}
\subsection{Mollification}
We define the functions $(u_{\ell_q},  p_{\ell_q}, \RR_{\ell_q})$ by the spatial  mollifier $\psi_{\ell_q}$ and the time mollifier $\varphi_{\ell_q}$ in Definition \ref{e:defn-mollifier-t} as follows: For $(x,t)\in \R^3\times [0, T]$,
\begin{align*}
   & u_{\ell_q} (x,t):=\int_t^{t+\ell_q}(\uq * \psi_{\ell_q})(x,s)\varphi_{\ell_q}(t-s)\dd s, \\
   &\ulql(x,t):=\int_t^{t+\ell_q}(\uql * \psi_{\ell_q})(x,s)\varphi_{\ell_q}(t-s)\dd s, \\
      &\ulqnl(x,t):=\int_t^{t+\ell_q}(\uqnl * \psi_{\ell_q})(x,s)\varphi_{\ell_q}(t-s)\dd s, \\
   &p_{\ell_q}(x,t):=\int_t^{t+\ell_q}(p_q * \psi_{\ell_q})(x,s)\varphi_{\ell_q}(t-s)\dd s,\\
 &   \RR_{\ell_q}(x,t) := \int_t^{t+\ell_q}(\RR_q * \psi_{\ell_q})(x,s)\varphi_{\ell_q}(t-s)\dd s, \\
 &\Rem:=  u_{\ell_q} \otimes u_{\ell_q} - \int_t^{t+\ell_q}((u_q \otimes u_q) * \psi_{\ell_q})(x,s)\varphi_{\ell_q}(t-s)\dd s.
\end{align*}
 One easily verifies that  $(u_{\ell_q},  p_{\ell_q}, \RR_{\ell_q}, \Rem)$ solves the Cauchy problem
\begin{equation}
\left\{ \begin{alignedat}{-1}
&\del_t u_{\ell_q}-\Delta u_{\ell_q}+\Div (u_{\ell_q}\otimes u_{\ell_q})  +\nabla p_{\ell_q}   =  \Div \RR_{\ell_q} +\Div \Rem,
\\
 & \nabla \cdot u_{\ell_q} = 0,
  \\
  &  u_{\ell_q} |_{t=0}=   \int_0^{\ell_q}(u_q * \psi_{\ell_q})(x,s)\varphi_{\ell_q}(-s)\dd s
\end{alignedat}\right.
 \label{e:mollified-euler}
\end{equation}
{such that
\begin{align}
&\supp_x \ulql\subseteq \Omega_{q}+[-\lambda^{-1}_q, \lambda^{-1}_q]^3,\label{supp-vlq}\\
&\RR_{\ell_q}(t,x)=0, \quad\forall\, t\in [0, \tfrac{T}{4}+3\lambda^{-1}_{q-1}],\quad \supp_x \RR_{\ell_q} \subseteq \Omega_{q}+[-\lambda^{-1}_q, \lambda^{-1}_q]^3.\label{supp-Rlq}
\end{align}
Moreover,  $(u_{\ell_q}, \RR_{\ell_q}, \Rem)$ satisfies the following estimates.
\begin{proposition}[Estimates  for $(u_{\ell_q}, \RR_{\ell_q}, \Rem)$]\label{p:estimates-for-mollified}For any integers $L,N\ge 0$, we have
\begin{align}
&\|\partial^L_t u_{\ell_q} \|_{L^\infty_tH^{N+3}} \lesssim  \lambda^5_{q} \ell_q^{-N-L},  \label{e:v_ell-CN+1}
\\
&{\|\RR_{\ell_q}\|_{L^1_{t,x}} \le  \delta_{q+1} \lambda^{-4\alpha}_q}, && \label{e:R_ell}
\\
&\|\partial^L_t\RR_{\ell_q}\|_{L^\infty_tW^{N+3,1}} \lesssim   \lambda^5_{q} \ell_q^{-N-L},\label{e:R_ell-W} \\
&\|\RR^{rem}_q\|_{L^\infty_tH^{N+2}} \lesssim  \lambda^{10}_{q} \ell_q^{-N+1}.\label{e:R_rem}
\end{align}
\end{proposition}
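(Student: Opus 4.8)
The plan is to derive each of the four estimates in Proposition~\ref{p:estimates-for-mollified} directly from the definitions of the mollified quantities together with the inductive bounds \eqref{e:vq-H3}, \eqref{e:RR_q-C0} and standard mollification estimates. Throughout I will use that $\psi_{\ell_q}$ and $\varphi_{\ell_q}$ are (rescaled) standard mollifiers, so that for any Banach space $Y$ of functions on $\R^3$ one has the two basic facts: $\|f*\psi_{\ell_q}\|_Y \lesssim \|f\|_Y$ (mollification does not increase the norm), $\|\nabla^N(f*\psi_{\ell_q})\|_Y \lesssim \ell_q^{-N}\|f\|_Y$ (gain of spatial derivatives at the cost of negative powers of $\ell_q$), and similarly in time $\|\partial_t^L(g*_t\varphi_{\ell_q})\|_{L^\infty_t} \lesssim \ell_q^{-L}\|g\|_{L^\infty_t}$. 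Note also that the supports in \eqref{supp-vlq}--\eqref{supp-Rlq} follow immediately: spatial mollification by $\psi_{\ell_q}$ enlarges the support by a ball of radius $\ell_q \le \lambda_q^{-1}$, and temporal mollification against $\varphi_{\ell_q}$ supported in $(-1,0)$ only shifts the time-support backwards by at most $\ell_q \le \lambda_{q-1}^{-1}$, which turns the vanishing interval $[0,\tfrac T4+4\lambda_{q-1}^{-1}]$ in \eqref{supp-Rq} into $[0,\tfrac T4+3\lambda_{q-1}^{-1}]$.

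For \eqref{e:v_ell-CN+1}: since $u_{\ell_q} = (u_q *_x \psi_{\ell_q})*_t\varphi_{\ell_q}$, commuting derivatives through the convolutions and distributing $N+3$ spatial derivatives and $L$ time derivatives onto the two mollifying kernels, I bound $\|\partial_t^L u_{\ell_q}\|_{L^\infty_t H^{N+3}} \lesssim \ell_q^{-(N+3)-L}\,\|u_q\|_{L^\infty_t H^0}$ naively, but this is wasteful; instead I keep three spatial derivatives on $u_q$ itself to invoke $\|u_q\|_{L^\infty_t H^3}\le \lambda_q^5$ from \eqref{e:vq-H3}, putting only the remaining $N$ spatial derivatives and all $L$ time derivatives on the kernels, giving the claimed $\lambda_q^5 \ell_q^{-N-L}$. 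Estimates \eqref{e:R_ell} and \eqref{e:R_ell-W} are handled identically using \eqref{e:RR_q-C0}: for \eqref{e:R_ell}, mollification is a contraction on $L^1_{t,x}$ (convolution with an $L^1$-normalized kernel, Young's inequality), so $\|\RR_{\ell_q}\|_{L^1_{t,x}}\le \|\RR_q\|_{L^1_{t,x}}\le \delta_{q+1}\lambda_q^{-4\alpha}$ with no loss; for \eqref{e:R_ell-W} I again retain three spatial derivatives on $\RR_q$ to use $\|\RR_q\|_{L^\infty_t W^{3,1}}\le \lambda_q^5$ and put the extra $N$ spatial and $L$ time derivatives on the kernels.

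For the commutator term \eqref{e:R_rem}, write $\Rem = u_{\ell_q}\otimes u_{\ell_q} - (u_q\otimes u_q)*\psi_{\ell_q}*\varphi_{\ell_q}$, which is the standard quadratic mollification commutator. The routine estimate (constant-coefficient version of the Constantin--E--Titi commutator estimate) gives, for a mollification at scale $\ell_q$, a gain of $\ell_q^{2}$ times two derivatives of $u_q$; more precisely one bounds $\|\Rem\|_{L^\infty_t H^{N+2}}$ by first writing $u_{\ell_q}\otimes u_{\ell_q} - (u_q\otimes u_q)_{\ell_q}$ as a difference controlled by $\ell_q\,\|u_q\|_{C^1_{x,t}}$-type quantities in the appropriate norm, distributing the $N+2$ derivatives using the product rule, and using $\|u_q\|_{L^\infty_t H^3}\le \lambda_q^5$ twice (one factor for each copy of $u$), with the excess derivatives (there are up to $N+2+1=N+3$ derivatives to place beyond the two $H^3$-factors, but here the commutator structure only costs $\ell_q^{-N+1}$ after accounting for the $\ell_q^2$ gain and the need to differentiate the kernels $N$ times while also exploiting one extra derivative from the commutator) landing on the mollifying kernels. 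Matching powers, $\lambda_q^{10}\ell_q^{-N+1}$ is exactly $(\lambda_q^5)^2$ from the two velocity factors times $\ell_q^{-N+1}$ from the net derivative count after the commutator gain. The main obstacle — really the only nonroutine point — is getting the bookkeeping of derivative counts and the commutator gain in \eqref{e:R_rem} to land on precisely $\ell_q^{-N+1}$ rather than a cruder $\ell_q^{-N}$ or $\ell_q^{-N-1}$; this requires being careful that the quadratic commutator genuinely buys back \emph{two} powers of $\ell_q$ (one of which is consumed producing the net $\ell_q^{+1}$ when $N=0$), and that we are allowed to spend three derivatives on each $u_q$-factor via \eqref{e:vq-H3} because $H^{N+2}$ with $N\ge 0$ and the two $H^3$ embeddings leave enough room in three dimensions (using $H^3(\R^3)\hookrightarrow L^\infty\cap W^{1,\infty}$ to handle the low-derivative factor in the Leibniz expansion). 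All other steps are direct applications of Young's inequality and the mollifier scaling.
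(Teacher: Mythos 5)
Your treatment of \eqref{e:v_ell-CN+1}, \eqref{e:R_ell} and \eqref{e:R_ell-W} is correct and is the standard argument the paper leaves implicit: keep three derivatives on $u_q$ (resp.\ $\RR_q$) to invoke \eqref{e:vq-H3} (resp.\ \eqref{e:RR_q-C0}), put the remaining $N$ spatial and $L$ temporal derivatives on the kernels, and use Young's inequality for the $L^1$ bound. The support statements are also fine.

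The proof of \eqref{e:R_rem}, however, has a genuine gap, and it sits exactly at the point you yourself flag as "the only nonroutine point.'' For $N\ge 1$ the bound $\lambda_q^{10}\ell_q^{-N+1}$ follows with no commutator structure at all, by the triangle inequality: $\|u_{\ell_q}\otimes u_{\ell_q}\|_{H^{N+2}}+\|(u_q\otimes u_q)_{\ell_q}\|_{H^{N+2}}\lesssim \ell_q^{-(N-1)}\|u_q\otimes u_q\|_{H^3}\lesssim \lambda_q^{10}\ell_q^{-N+1}$, using that $H^3(\R^3)$ is an algebra. So the only real content of \eqref{e:R_rem} is the case $N=0$, where a genuine gain of one power of $\ell_q$ is claimed (and is later used in the paper as $\|\Rem\|_{L^\infty_tH^2}\lesssim\lambda_q^{10}\ell_q\lesssim\lambda_q^{-40}$ in the estimate of $F_{q+1}$). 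Your argument for this gain rests on controlling the mollification differences by "$\ell_q\|u_q\|_{C^1_{x,t}}$-type quantities,'' i.e.\ on a sup-norm bound for $\partial_t u_q$. No such bound is among the inductive assumptions: the only route to $\partial_t u_q$ is the equation \eqref{NSR}, $\partial_t u_q=\Delta u_q-\mathcal{P}\Div(u_q\otimes u_q)+\mathcal{P}\Div\RR_q$, in which $\Delta u_q$ is only in $H^1$ and $\Div\RR_q$ only in $W^{2,1}$ (by \eqref{e:vq-H3}, \eqref{e:RR_q-C0}). Converting these to the $L^\infty_x$ or $H^2_x$ control your Leibniz/commutator bookkeeping needs costs inverse powers of $\ell_q$ (e.g.\ $\|\Div\RR_q*\psi_{\ell_q}\|_{L^\infty_x}\lesssim\ell_q^{-1}\lambda_q^5$, $\|\Div\RR_q*\psi_{\ell_q}\|_{H^2}\lesssim\ell_q^{-3/2}\lambda_q^5$), so the temporal increments of $u_q$ over a window of length $\ell_q$ are \emph{not} $O(\ell_q)$ times an admissible quantity, and the purported "two powers of $\ell_q$'' bought by the quadratic commutator evaporate for the part of the error produced by the time mollification. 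The purely spatial commutator piece does gain $\ell_q^2\|\nabla u_q\|_{L^\infty}^2\lesssim\ell_q^2\lambda_q^{10}$ and is fine; what is missing is an argument handling the temporal mollification (for instance, splitting $\Rem$ into a spatial commutator plus a temporal one and estimating the time increments through the equation in $L^1$-based or negative-order norms, exploiting the compact support of $\RR_q$), and as written your sketch only asserts, rather than proves, that the powers "land on precisely $\ell_q^{-N+1}$'' in the $N=0$ case.
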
}
\subsection{Gluing procedure}
Let $\zeta_q(t)\in C^\infty(\R)$ be defined by
\begin{equation}\label{def-zeta}
    \zeta_q(t)=1, \,\, t\le \tfrac{T}{4}+2\lambda^{-1}_{q-1}; \,\,\zeta_q(t)=0, \,\, t\ge \tfrac{T}{4}+3\lambda^{-1}_{q-1}; \,\,|\partial^N_t\zeta_q|\le C\lambda^N_{q-1}.
\end{equation}
We define $\vv_q$ by
\begin{equation}\label{def-vvq}
\vv_q=\zeta_q u_q+(1-\zeta_q)u_{\ell_q},
\end{equation}
and
\[\ubql=\zeta_q \uql+(1-\zeta_q)\ulql,\quad \ubqnl=\zeta_q \uqnl+(1-\zeta_q)\ulqnl.\]
Thanks to \eqref{e:vq-H3}--\eqref{e:vqnl-H3}, one immediately shows that
\begin{align}
    &\|\ubql\|_{L^2_{t,x}\cap L^p_tL^{\infty}}\le \frac{M}{2}(1-\delta^{1/2}_q),\quad \supp_x \ubql\subseteq \Omega_{q}+[-\lambda^{-1}_q, \lambda^{-1}_q]^3;\label{e:ubql}\\
   & \|\ubqnl\|_{\widetilde{L}^{\infty}_tB^{3}_{2,2}}  \le  \lambda^5_q,\,\quad\|\ubqnl\|_{\widetilde{L}^{\infty}_tB^{1/2}_{2,1}\cap \widetilde{L}^{1}_tB^{5/2}_{2,1} }\le  M^{-1}+\sum_{k=2}^{q} \delta_{k+1}\lambda^{-6\alpha}_k;\label{e:ubqnl}\\
   &\|\vv_q\|_{L^2_{t,x}}\leq M(1- \delta^{1/2}_q),  \quad\,\,\|\vv_q\|_{L^\infty_tH^3}  \le  \lambda^5_q.\label{e:ubq}
\end{align}
Moreover, we infer from \eqref{e:vq-H3} and \eqref{def-vvq} that
\begin{align}\label{uq-ubq}
\|u_q-\vv_q\|_{L^2_{t,x}\cap L^p_tL^{\infty}}=\|(1-\zeta_q)(u_q-\ulql)\|_{L^2_{t,x}\cap L^p_tL^{\infty}}\le C\ell_q\|u_q\|_{L^\infty_tH^3}\le \lambda^{-40}_q.
\end{align}
This estimate together with \eqref{e:E-q} yields that
\begin{align}\label{e:E-vvq}
    \delta_{q+1}\le E-\int_{\frac{3}{4}T}^T\int_{\R^3}|\vv_q|^2\dd x\dd t\le 5\delta_{q+1}.
\end{align}
It follows from \eqref{supp-Rq} and \eqref{supp-Rlq} that
$$\zeta_q\, \Div \RR_q=0, \quad (1-\zeta_q)\,\Div \RR_{\ell_q}=\Div \RR_{\ell_q}.$$ Therefore, one obtains that
\begin{align}\label{ubq-t}
\vv_q(t)=u_q(t), \,\,t\in[0, \tfrac{T}{4}+2\lambda^{-1}_{q-1}]
\end{align}
 and  satisfies
\begin{equation}
\left\{ \begin{alignedat}{-1}
&\del_t \vv_q-\Delta \vv_q+\Div (\vv_q\otimes \vv_q)  +\nabla \pp_q =  \Div \RR_{\ell_q} +(1-\zeta_q)\Div \RR^{rem}_q+\partial_t\zeta_q(u_q-u_{\ell_q})\\
&\qquad\qquad\qquad\qquad\qquad\qquad\qquad\quad+\zeta_q(1-\zeta_q)\Div ((u_q-u_{\ell_q})\otimes (u_q-u_{\ell_q})),
\\
 & \nabla \cdot \vv_q = 0,
  \\
  &  \vv_q |_{t=0}= \uin.
\end{alignedat}\right.
 \label{e:mollified-euler}
\end{equation}

{\subsection{Perturbation} \textit{Intermittent shear velocity flow}\quad Assume that $\psi:\mathbb{R}\rightarrow\mathbb{R}$ is a smooth cutoff function supported on the interval $(0, \lambda^{-1}_1]$ and $\int_{\R} \psi''(x) \dd x=0$. We set $\phi=\frac{\dd^2}{\dd x^2}\psi$ and normalize it in such a way that
$$\int_{\mathbb{R}}\phi^2\dd x=1.$$
For any small positive parameter $r$ such that $r^{-1}\in\ZZ$ and $Q>0$, we define
\begin{align}\label{def-phir}
\phi_{r,Q}(x) :=Qr^{-\frac{1}{2}}\phi (r^{-1}x),\qquad \psi_{r,Q}(x) :=Qr^{-\frac{1}{2}}\psi (r^{-1}x).
\end{align}
Particularly, we denote
\begin{align*}
\phi_{r}(x) :=\phi_{r,K^{1/2}}(x), \,\,\psi_{r}(x) :=\psi_{r,K^{1/2}}(x)\,\,\text{and} \,\,\widetilde{\phi}_{r}(t):=\phi_{r, \frac{T^{1/2}}{4}}(t).
\end{align*}
 We periodize $\phi_r$ and $\psi_r$ so that  the resulting functions are periodic functions defined on $\mathbb{R}/ K\mathbb{Z}=:\mathbb{T}$, and we still denote the $\mathbb{T}$-periodic functions by $\phi_r$ and $\psi_r$. On the other hand, we periodize $\widetilde\phi_r$ so that  the resulting function is periodic function defined on  $\mathbb{R}/ {\tfrac{T}{4}}=:\widetilde{\mathbb{T}}$ and we denote the $\widetilde{\mathbb{T}}$-periodic functions by $\widetilde{\phi_r}$. }
\begin{proposition}[Estimates for the Fourier coefficient]\label{est-cmr}For $\phi_r(x)$, we rewrite it by Fourier series as follows:
\begin{align*}
&\phi_r(x)=\sum_{m\in\ZZ\backslash\{0\}} c_{m,r}e^{\ii  {2\pi}  mx /{K}}
\end{align*}
Then we have
\begin{equation}\label{cm-2}
|c_{m,r}|\le CK^{9/2}r^{-\frac{9}{2}}m^{-4}.
\end{equation}
\end{proposition}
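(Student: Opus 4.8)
The plan is to reduce \eqref{cm-2} to a classical decay estimate for the Fourier transform of the \emph{fixed} profile $\phi=\psi''$ on the whole line, and then exploit that $\psi$ is smooth with compact support. There is no deep idea here; it is purely a scaling-and-integration-by-parts bookkeeping.

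First I would rewrite the Fourier coefficient in terms of $\widehat\phi$. By construction $\phi_r(x)=K^{1/2}r^{-1/2}\phi(r^{-1}x)$, and before periodization $\phi(r^{-1}\cdot)$ is supported in $(0,r\lambda_1^{-1}]$; since $0<r\le 1$ (recall $r^{-1}\in\ZZ$), $\lambda_1\ge 1$ and $K\ge 1$, this interval lies inside a single period of $\TT=\R/K\ZZ$, so periodization does not change the integral defining the $m$-th coefficient. Hence
\[
c_{m,r}=\frac1K\int_0^K\phi_r(x)\,e^{-\ii 2\pi mx/K}\dd x=\frac{K^{1/2}r^{-1/2}}{K}\int_\R\phi(r^{-1}x)\,e^{-\ii 2\pi mx/K}\dd x,
\]
and the change of variables $x=ry$ turns this into $c_{m,r}=K^{-1/2}r^{1/2}\,\widehat\phi\!\big(\tfrac{2\pi m r}{K}\big)$, where $\widehat\phi(\xi):=\int_\R\phi(y)e^{-\ii\xi y}\dd y$.

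Next I would estimate the decay of $\widehat\phi$. Since $\phi=\psi''\in C^\infty_c(\R)$, all boundary terms vanish upon integrating by parts, and doing so four times gives $\widehat\phi(\xi)=(\ii\xi)^{-4}\int_\R\phi^{(4)}(y)e^{-\ii\xi y}\dd y$ for $\xi\ne 0$, whence $|\widehat\phi(\xi)|\le|\xi|^{-4}\|\psi^{(6)}\|_{L^1(\R)}$; here $\|\psi^{(6)}\|_{L^1}$ is a universal constant because $\psi$ is a fixed cutoff. Taking $\xi=2\pi mr/K$ and combining with the previous step yields
\[
|c_{m,r}|\le C\,K^{-1/2}r^{1/2}\cdot K^4r^{-4}|m|^{-4}=C\,K^{7/2}r^{-7/2}|m|^{-4},
\]
and since $K\ge 1$ and $0<r\le 1$ this is in particular bounded by $C\,K^{9/2}r^{-9/2}|m|^{-4}$, which is \eqref{cm-2}. (Consistently, the $m=0$ term is absent, since $\widehat\phi(0)=\int_\R\psi''=0$.)

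I do not expect a genuine obstacle: the only points requiring care are the bookkeeping of the powers of $K$ and $r$ produced by the rescaling, and checking that the periodization is harmless, which rests precisely on the support $(0,r\lambda_1^{-1}]$ of the rescaled bump fitting inside one period $[0,K]$ — guaranteed by the parameter choices in Section~\ref{para}.
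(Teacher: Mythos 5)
Your proof is correct and takes essentially the same route as the paper's: after the substitution $x=ry$ the coefficient becomes $K^{-1/2}r^{1/2}\widehat\phi\big(2\pi mr/K\big)$, and four integrations by parts on the fixed compactly supported profile $\phi=\psi''$ (no boundary terms) give the $|m|^{-4}$ decay, which is exactly the computation the paper carries out directly on $[0,r^{-1}K]$. Your intermediate bound $CK^{7/2}r^{-7/2}|m|^{-4}$ is in fact slightly sharper than the stated one and is legitimately weakened to $CK^{9/2}r^{-9/2}|m|^{-4}$ using $K\ge 1$, $r\le 1$, so there is no gap.
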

\begin{proof}By the definitions of $c_{m,r}$ and $\phi_r$ , we have
    \begin{align*}
c_{m,r}=&\frac{1}{K}\int_{\TTT}\phi_r(x) e^{-\ii  2\pi  mx/K}\dd x=K^{-1/2}\int_{0}^{K}r^{-\frac{1}{2}}\phi(r^{-1}x)e^{-\ii  2\pi  mx/K}\dd x\\
=&K^{-1/2}\int_{0}^{r^{-1}K}r^{\frac{1}{2}}\phi(y) e^{- \ii 2\pi  mry/K}\dd y=K^{-1/2}r^{\frac{1}{2}}\int_{0}^{r^{-1}K}\phi(y) \tfrac{\ii K}{2\pi mr} \dd e^{- \ii 2\pi  mry/K}\\
=&-K^{-1/2}r^{\frac{1}{2}}\int_{0}^{r^{-1}K} \tfrac{\ii K}{2\pi mr}  e^{- \ii 2\pi  mry/K}  \phi'(y)\dd y.
\end{align*}
By integration by parts, one infers that
 \begin{align*}
c_{m,r}=& K^{-1/2}r^{\frac{1}{2}}\int_{0}^{r^{-1}K} \tfrac{ K^2}{(2\pi mr)^2}  \phi'(y)\dd e^{- \ii 2\pi  mry/K}\\
=&- K^{-1/2}r^{\frac{1}{2}}\int_{0}^{r^{-1}K} \tfrac{ K^2}{(2\pi mr)^2}  e^{- \ii 2\pi  mry/K} \phi''(y)\dd y.
\end{align*}
Hence, we show that
\begin{align*}
|c_{m,r}|\le CK^{5/2}r^{-5/2}m^{-2}.
\end{align*}
Using integration by parts twice again, we have
\begin{align*}
|c_{m,r}|\le CK^{9/2}r^{-9/2}m^{-4}.
\end{align*}
Hence, we complete the proof of Proposition \ref{est-cmr}.
\end{proof}
Next, we choose a large number  $N_{\Lambda}$ such that
$N_{\Lambda}{k}\in \mathbb{Z}^3$ for all $k\in\Lambda$. By \eqref{def-phir}, we introduce the following functions:
\begin{align*}
    \phi_{(\epsilon_0, 1-\epsilon_0, k)}(x): =&\phi_{\lambda^{-\epsilon_0}_{q+1}}(\lambda^{1-\epsilon_0}_{q+1}N_{\Lambda}k\cdot x),\quad
    g_{(2, \sigma_0, k)}(t):=\widetilde\phi_{\lambda^{-2}_{q+1}}({\lambda^{\sigma_0}_{q+1}}(t-t_k)),
\end{align*}
where we choose suitable shifts $t_k$ with $k\in\Lambda$ so that
$g_{(2, \sigma_0, k)}\cdot g_{(2, \sigma_0, k')}=0,~~\forall k\neq k'\in \Lambda.$
We immediately have that
 \begin{align}\label{P=0}
     \frac{1}{|\TTT^3|}\int_{\TTT^3}\phi^2_{(\epsilon_0, 1-\epsilon_0, k)}(x)\dd x=1, \quad \frac{1}{|\widetilde{\mathbb{T}}|}\int_{\widetilde{\mathbb{T}}}\widetilde g^2_{(2, \sigma_0, k)}(t)\dd t=1.
 \end{align}
 Thanks to $\phi_r=r^2\frac{\dd^2}{\dd x^2}\psi_r$ and $|k|=1$, we obtain that
  \begin{align}\nonumber
    \phi_{(\epsilon_0, 1-\epsilon_0, k)}(x)&=N^{-1}_{\Lambda}\lambda^{-1-\epsilon_0}_{q+1}\Div\Big(\big((\psi_{\lambda^{-\epsilon_0}_{q+1}})'(\lambda^{1-\epsilon_0}_{q+1}N_{\Lambda}k\cdot x)\big)k\Big).
 \end{align}
 In the following, we denote
 \begin{align*}
     \lambda^{-\epsilon_0}_{q+1}\big((\psi_{\lambda^{-\epsilon_0}_{q+1}})'(\lambda^{1-\epsilon_0}_{q+1}N_{\Lambda}k\cdot x)\big)=:\psi'_{\lambda^{-\epsilon_0}_{q+1}}(\lambda^{1-\epsilon_0}_{q+1}N_{\Lambda}k\cdot x)\big).
 \end{align*}
 Therefore, we have the relation
\begin{align}\nonumber
    \phi_{(\epsilon_0, 1-\epsilon_0, k)}(x)
   & =N^{-1}_{\Lambda}\lambda^{-1}_{q+1}\Div\big(\psi'_{\lambda^{-\epsilon_0}_{q+1}}(\lambda^{1-\epsilon_0}_{q+1}N_{\Lambda}k\cdot x)k\big)\nonumber
 \end{align}
Actually, $\phi_{(\epsilon_0, 1-\epsilon_0, k)}\bar{k}$ and $\phi_{(\epsilon_0, 1-\epsilon_0, k)}\bar{\bar{k}}$ are  the so-called \emph{intermittent shear flows}, which are introduced in \cite{2Beekie}. Moreover, it is well-known that the following estimates hold.
 \begin{proposition}[\cite{2Beekie}]\label{guji1}
For $p\in[1,\infty]$ and $m\in\mathbb{N}$, we have the following estimates
\begin{align*}
   &\big\|{D}^m\phi_{(\epsilon_0, 1-\epsilon_0, k)}\big\|_{L^p(\TTT^3)}\lesssim N^m_{\Lambda}{K^{\frac{1}{2}+\frac{2}{p}}}\lambda^{m-\epsilon_0(\frac{1}{p}-\frac{1}{2})}_{q+1}, \\
   &\big\|\tfrac{\dd ^m}{\dd t^m}g_{{(2, \sigma_0,k)}}\big\|_{L^p([0,T])}\lesssim \lambda^{{m(\sigma_0 +2)}-2(\frac{1}{p}-\frac{1}{2})}_{q+1}\\
 &\big\|{D}^m\psi'_{\lambda^{-\epsilon_0}_{q+1}}(\lambda^{1-\epsilon_0}_{q+1}N_{\Lambda}k\cdot x)\big\|_{L^p(\TTT^3)}\lesssim N^m_{\Lambda}{K^{\frac{1}{2}+\frac{2}{p}}}\lambda^{m-\epsilon_0(\frac{1}{p}-\frac{1}{2})}_{q+1}.
\end{align*}
\end{proposition}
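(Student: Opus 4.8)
The plan is to reduce each of the three displayed bounds to two elementary facts: exact rescaling identities for the one-dimensional building blocks $\phi_r$, $\widetilde\phi_r$, $\psi'_r$, and an equidistribution identity converting the $L^p(\TTT^3)$-norm of a composed profile $h(\nu\cdot x)$ into the one-dimensional $L^p$-norm of $h$. First I would establish the equidistribution identity. By the parameter choices of Section~\ref{para} ($b(1-\epsilon_0)\in\NN$, so $\lambda_{q+1}^{1-\epsilon_0}\in\NN$, and $N_\Lambda k\in\ZZ^3$ for every $k\in\Lambda$), the frequency vector $\nu:=\lambda_{q+1}^{1-\epsilon_0}N_\Lambda k$ has integer components with $\nu\neq 0$, hence $x\mapsto h(\nu\cdot x)$ descends to a well-defined function on $\TTT^3=\R^3/(K\ZZ)^3$ for any $K$-periodic $h$ on $\R$. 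Since $|h|^p$ is again $K$-periodic and $\int_{\TTT^3}e^{2\pi i n(\nu\cdot x)/K}\,dx=0$ for all $n\neq0$ (at least one component of $\nu$ is a nonzero integer), one obtains $\int_{\TTT^3}|h(\nu\cdot x)|^p\,dx=K^2\int_0^K|h(y)|^p\,dy$, i.e.
\[\|h(\nu\cdot x)\|_{L^p(\TTT^3)}=K^{2/p}\|h\|_{L^p(\mathbb{T})},\qquad 1\le p\le\infty,\]
with $p=\infty$ following from surjectivity of $x\mapsto\nu\cdot x\bmod K$; for $p=2$ and $h=\phi_r^2$ this recovers \eqref{P=0}. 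Moreover, the non-periodized $\phi_r,\widetilde\phi_r,\psi'_r$ are supported in intervals of length $\lesssim\lambda_1^{-1}\lambda_{q+1}^{-\epsilon_0}$ (respectively $\lesssim\lambda_1^{-1}\lambda_{q+1}^{-2}$), which lie strictly inside one period once $a$ is large, so the periodized profiles agree with the originals on a fundamental domain and $\|\cdot\|_{L^p(\mathbb{T})}$ may be replaced by $\|\cdot\|_{L^p(\R)}$ below.

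Next I would record the one-dimensional rescaling estimates. With $r=\lambda_{q+1}^{-\epsilon_0}$, from $\phi_r(y)=K^{1/2}r^{-1/2}\phi(r^{-1}y)$ one gets $\phi_r^{(m)}(y)=K^{1/2}r^{-1/2-m}\phi^{(m)}(r^{-1}y)$, hence $\|\phi_r^{(m)}\|_{L^p(\R)}=K^{1/2}r^{-1/2-m+1/p}\|\phi^{(m)}\|_{L^p}=K^{1/2}\lambda_{q+1}^{\epsilon_0(m+\frac12-\frac1p)}\|\phi^{(m)}\|_{L^p}$; recalling that $\psi'_r$ denotes $r(\psi_r)'=K^{1/2}r^{-1/2}\psi'(r^{-1}\cdot)$, the same computation gives $\|(\psi'_r)^{(m)}\|_{L^p(\R)}\lesssim_m K^{1/2}\lambda_{q+1}^{\epsilon_0(m+\frac12-\frac1p)}$. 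For the temporal block, $\widetilde\phi_r(t)=\tfrac{T^{1/2}}{4}r^{-1/2}\phi(r^{-1}t)$ with $r=\lambda_{q+1}^{-2}$, whence $\|\widetilde\phi_r^{(m)}\|_{L^p(\R)}\lesssim_m\lambda_{q+1}^{2m+1-2/p}$.

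Finally I would assemble the three bounds. Since $\phi_{(\epsilon_0,1-\epsilon_0,k)}(x)=\phi_r(\nu\cdot x)$, the chain rule and $|k|=1$ give $|D^m\phi_{(\epsilon_0,1-\epsilon_0,k)}|\lesssim_m(N_\Lambda\lambda_{q+1}^{1-\epsilon_0})^m|\phi_r^{(m)}(\nu\cdot x)|$, so by the two preceding steps
\[\|D^m\phi_{(\epsilon_0,1-\epsilon_0,k)}\|_{L^p(\TTT^3)}\lesssim_m(N_\Lambda\lambda_{q+1}^{1-\epsilon_0})^m K^{\frac12+\frac2p}\lambda_{q+1}^{\epsilon_0(m+\frac12-\frac1p)}=N_\Lambda^m K^{\frac12+\frac2p}\lambda_{q+1}^{m-\epsilon_0(\frac1p-\frac12)},\]
the exponent collapsing because $m(1-\epsilon_0)+\epsilon_0(m+\tfrac12-\tfrac1p)=m-\epsilon_0(\tfrac1p-\tfrac12)$; the third estimate follows verbatim with $\psi'_r$ in place of $\phi_r$. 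For the temporal estimate, $\tfrac{d^m}{dt^m}g_{(2,\sigma_0,k)}(t)=\lambda_{q+1}^{m\sigma_0}\widetilde\phi_r^{(m)}(\lambda_{q+1}^{\sigma_0}(t-t_k))$, and the substitution $s=\lambda_{q+1}^{\sigma_0}(t-t_k)$, together with $\lambda_{q+1}^{\sigma_0}\in\NN$ (so $[0,T]$ covers $4\lambda_{q+1}^{\sigma_0}$ full periods of the $\tfrac T4$-periodic $\widetilde\phi_r$), gives $\|\tfrac{d^m}{dt^m}g_{(2,\sigma_0,k)}\|_{L^p([0,T])}=4^{1/p}\lambda_{q+1}^{m\sigma_0}\|\widetilde\phi_r^{(m)}\|_{L^p([0,T/4])}\lesssim_m\lambda_{q+1}^{m\sigma_0+2m+1-2/p}=\lambda_{q+1}^{m(\sigma_0+2)-2(\frac1p-\frac12)}$.

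The computations are routine; the only point requiring care is the bookkeeping of the two intertwined scales---spatially, the internal profile scale $r=\lambda_{q+1}^{-\epsilon_0}$ versus the oscillation frequency $\lambda_{q+1}^{1-\epsilon_0}$, and in time, $\lambda_{q+1}^{-2}$ versus the concentration rate $\lambda_{q+1}^{\sigma_0}$---so that the cancellations producing the sharp exponents $m-\epsilon_0(\tfrac1p-\tfrac12)$ and $m(\sigma_0+2)-2(\tfrac1p-\tfrac12)$ are reproduced exactly. The one structural ingredient is the equidistribution identity of the first step, and it is immediate from the integrality of $\nu$ built into the parameters of Section~\ref{para}.
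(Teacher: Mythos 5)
Your proof is correct. Note, however, that the paper does not prove this proposition at all: it is quoted from the intermittent--shear--flow construction of Beekie--Buckmaster--Vicol (reference \cite{2Beekie}), so there is no in-paper argument to compare against; what you have written is essentially the standard verification underlying that citation, adapted to the conventions here. Your two ingredients are exactly the right ones: (i) the exact one-dimensional rescaling of $\phi_r=K^{1/2}r^{-1/2}\phi(r^{-1}\cdot)$, of $\psi'_r=r(\psi_r)'=K^{1/2}r^{-1/2}\psi'(r^{-1}\cdot)$, and of $\widetilde\phi_r$ (with the prefactor $T^{1/2}/4<1$ harmlessly absorbed into the implicit constant), and (ii) the equidistribution identity $\|h(\nu\cdot x)\|_{L^p(\TTT^3)}=K^{2/p}\|h\|_{L^p(\mathbb{T})}$ for $K$-periodic $h$ and $\nu=\lambda_{q+1}^{1-\epsilon_0}N_\Lambda k\in\ZZ^3\setminus\{0\}$, which you correctly justify from the parameter choices $b(1-\epsilon_0)\in\NN$, $b\sigma_0\in\NN$ and $N_\Lambda k\in\ZZ^3$ (these integrality facts are precisely why the composed profiles descend to $\TTT^3$ and why $[0,T]$ contains an integer number, $4\lambda_{q+1}^{\sigma_0}$, of periods of the shifted temporal profile, so the shifts $t_k$ are irrelevant). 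Your observation that the rescaled profiles are supported in intervals of length $\lesssim\lambda_1^{-1}\lambda_{q+1}^{-\epsilon_0}$ (resp.\ $\lesssim\lambda_1^{-1}\lambda_{q+1}^{-2}$), hence fit inside a single period so that periodization does not change $L^p$ norms, and your exponent bookkeeping $m(1-\epsilon_0)+\epsilon_0(m+\tfrac12-\tfrac1p)=m-\epsilon_0(\tfrac1p-\tfrac12)$ and $m\sigma_0+2m+1-\tfrac2p=m(\sigma_0+2)-2(\tfrac1p-\tfrac12)$, are both accurate, and the $p=2$, $m=0$ case is consistent with the normalization \eqref{P=0} as you point out.
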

\noindent\emph{Amplitudes}\quad Before constructing the perturbation, we define amplitudes firstly. Let $\chi:[0,\infty)\to [1,\infty)$ be a smooth function satisfying
\begin{equation}\label{def-chi}
\chi(z)=\left\{ \begin{alignedat}{-1}
&1, \quad 0\le z\le 1,\\
&z, \quad z\ge 2,
\end{alignedat}\right.
\end{equation}
with $z\le 2\chi(z)\le 4z$ for $z\in (1,2)$. We define that
\[\chi_q:=\frac{1}{TK^3}\chi\Big(\Big\langle\tfrac{\RR_{\ell_q}}{\delta_{q+1}\lambda^{-4\alpha}_q}\Big\rangle\Big)
.\]
Next, the space cutoffs $\{\widetilde\eta_q(x)\}_{q\ge 1}\in C^{\infty}_c(\R^3)$ are defined by
\begin{equation}\label{eta}
{\rm supp} \,\,\widetilde\eta_{q}=\Omega_{q+1} , \quad\quad   \widetilde\eta_q|_{\Omega_{q}+ [ -\lambda^{-1}_q ,\lambda^{-1}_q ]^3}\equiv 1,\quad~~\quad|D^N\widetilde\eta_q|\lesssim \lambda^{N}_q,
\end{equation}
where $\Omega_{q+1}$ is defined in \eqref{omega}. We introduce the time-space cutoffs  $\{\eta_q\}_{q\ge 1}$ by
\begin{align}\label{def-etaq}
    \eta_q(t,x)=(1-\zeta_q(t))\widetilde\eta_q(x),
\end{align}
where $\{ \zeta_q(t)\}\in C^\infty(\R)$ is defined by \eqref{def-zeta}.

Thanks to \eqref{supp-vlq} and \eqref{supp-Rlq}, one can easily deduce that
\begin{align}\label{R=Rq}
\eta_{q}\RR_{\ell_{q}}=\RR_{ \ell_{q}}
\end{align}
We claim that
\begin{align}\label{eta_qchi_q}
   \int_{\frac{3T}{4}}^T\int_{\R^3}\eta^2_q\chi_q\dd x\dd t
   \in \Big[\frac{1}{8}, 2\Big].
\end{align}
Indeed, we deduce from the definitions of $\eta_q$ and $\chi_q$ that
\begin{align*}
   \int_{\frac{3T}{4}}^T\int_{\R^3}\eta^2_q\chi_q\dd x\dd t=\frac{1}{TK^3}\int_{\frac{3T}{4}}^T\int_{\Omega_{q+1}}\widetilde\eta_q^2(x)\chi\Big(\Big\langle\tfrac{\RR_{\ell_q}}{\delta_{q+1}\lambda^{-4\alpha}_q}\Big\rangle\Big)\dd x\dd t.
\end{align*}
On one hand, by \eqref{e:R_ell}, one has
\begin{align}
   \frac{1}{TK^3}\int_{\frac{3T}{4}}^T\int_{\Omega_{q+1}}\widetilde\eta_q^2(x)\chi\Big(\Big\langle\tfrac{\RR_{\ell_q}}{\delta_{q+1}\lambda^{-4\alpha}_q}\Big\rangle\Big)\dd x\dd t
   \le&\frac{1}{TK^3}\int_{\frac{3T}{4}}^T\int_{\Omega_{q+1}}4\big(1+\big|\tfrac{\RR_{\ell_q}}{\delta_{q+1}\lambda^{-4\alpha}_q}\big|\big)\dd x\dd t\nonumber\\
   \le&\frac{1}{TK^3}(TK^3+4)\le 2. \nonumber
\end{align}
On the other hand,
\begin{align*}
   &\frac{1}{TK^3}\int_{\frac{3T}{4}}^T\int_{\Omega_{q+1}}\widetilde\eta_q^2(x)\chi\Big(\Big\langle\tfrac{\RR_{\ell_q}}{\delta_{q+1}\lambda^{-4\alpha}_q}\Big\rangle\Big)\dd x\dd t
   \ge\frac{1}{TK^3}\int_{\frac{3T}{4}}^T\int_{\Omega_{q}}1\dd x\dd t\ge\frac{1}{8}.
\end{align*}
We set
\begin{align}\label{rho-bar}
    \bar{\rho}_q=\frac{{E-\int_{\frac{3T}{4}}^T\int_{\R^3}|\vv_q|^2\dd x\dd t-3\delta_{q+2}}}{\int_{\frac{3T}{4}}^T\int_{\TTT^3}\eta^2_q\chi_q\dd x\dd t}.
\end{align}
Combining \eqref{e:E-vvq} with \eqref{eta_qchi_q} shows that
\begin{align*}
\bar{\rho}_q\in \Big[\frac{\delta_{q+1}}{8}, \frac{2\delta_{q+1}}{5}\Big].
\end{align*}
We give the temporal cutoff $\eta(t)\in C^{\infty}([0,T])$  by
\begin{equation}\label{def-eta}
    \eta(t)=1, \,t\in [0, \tfrac{T}{4}]; \quad\eta(t)=0, \,t\in [\tfrac{3T}{4}, T]; \quad|\partial^N_t\eta|\le CT^{-N},
\end{equation}
and  define
\begin{align}\label{rho-q}
    \rho_q(t)=\delta_{q+1}\eta(t)+(1-\eta(t))\bar{\rho}_q.
\end{align}

We are in position to give the  amplitudes $a_{(k,q)}(t,x)$ by
\begin{equation}\label{def-akq}
    a_{(k,q)}(t,x)=\eta_{q}
 a_k\Big({\rm Id}-\frac{\RR_{\ell_{q}}}{\chi_{q}\rho_q}\Big)
 (\chi_q \rho_q)^{1/2},
\end{equation}
 where $a_k$ stems from Lemma \ref{first S}.

\subsubsection{Construction the perturbation}We  construct the principal perturbation $ \wpq$ as follows:
\begin{equation}\label{def-wpq-1}
 \begin{aligned}
 \wpq&:=\sum_{k\in\Lambda}a_{(k,q)} \phi_{(\epsilon_0, 1-\epsilon_0, k)}(x) g_{(2, \sigma_0, k)}(t)\bar{k}.
 \end{aligned}
 \end{equation}
Since $\wpq$ is not divergence-free,  we need to construct the incompressibility corrector of  $\wpq$  to ensure that the perturbation is divergence-free. Noting  that  for any $k\perp \bar{k}$, we have
\begin{align*}
  {\Div\big(\psi'_{\lambda^{-\epsilon_0}_{q+1}}(\lambda^{1-\epsilon_0}_{q+1}N_{\Lambda}k\cdot x)\bar{k}\otimes k\big)}=0,
\end{align*}
and
\begin{align*}
    \phi_{(\epsilon_0, 1-\epsilon_0, k)}(x) \bar{k}=N^{-1}_{\Lambda}\lambda^{-1}_{q+1}{\Div\big(\psi'_{\lambda^{-\epsilon_0}_{q+1}}(\lambda^{1-\epsilon_0}_{q+1}N_{\Lambda}k\cdot x)k\otimes \bar{k}\big)}.
\end{align*}
Then we rewrite $\wpq$ as
\begin{align}
 \wpq=&\sum_{k\in\Lambda}N^{-1}_{\Lambda}\lambda^{-1}_{q+1}a_{(k,q)}   g_{(2, \sigma_0, k)}(t){\Div\big(\psi'_{\lambda^{-\epsilon_0}_{q+1}}(\lambda^{1-\epsilon_0}_{q+1}N_{\Lambda}k\cdot x)k\otimes \bar{k}\big)}\nonumber\\
 =&\sum_{k\in\Lambda}N^{-1}_{\Lambda}\lambda^{-1}_{q+1}a_{(k,q)}  g_{(2, \sigma_0, k)}(t){\Div\big(\psi'_{\lambda^{-\epsilon_0}_{q+1}}(\lambda^{1-\epsilon_0}_{q+1}N_{\Lambda}k\cdot x)(k\otimes \bar{k}-\bar{k}\otimes k)\big)}. \label{def-wpq}
\end{align}
Based on this equality, we define $\wcq$ by
\begin{align}\label{def-wcq}
\wcq=&\sum_{k\in\Lambda}N^{-1}_{\Lambda}\lambda^{-1}_{q+1} \psi'_{\lambda^{-\epsilon_0}_{q+1}}(\lambda^{1-\epsilon_0}_{q+1}N_{\Lambda}k\cdot x) g_{(2, \sigma_0, k)}(t){\Div\big(a_{(k,q)}  (k\otimes \bar{k}-\bar{k}\otimes k)\big)}.
\end{align}
One easily deduces that
\begin{align*}
&\Div(\wpq+\wcq)\\
=&\sum_{k\in\Lambda}N^{-1}_{\Lambda}\lambda^{-1}_{q+1}\Div\Div\big(a_{(k,q)}   g_{(2, \sigma_0, k)}(t)\psi'_{\lambda^{-\epsilon_0}_{q+1}}(\lambda^{1-\epsilon_0}_{q+1}N_{\Lambda}k\cdot x)(k\otimes \bar{k}-\bar{k}\otimes k)\big)
=0,
\end{align*}
where we have used the fact that $\Div\Div M=0$ for any anti-symmetric matrix $M$.

Finally, we define $\wtq$ by solving the following Cauchy problem
 \begin{equation}
\left\{ \begin{alignedat}{-1}
&\del_t \wtq-\Delta \wtq+\Div (\wtq\otimes \wtq)+\Div(\wtq\otimes\ubqnl)\\
&\qquad\qquad\qquad\qquad\quad+\Div(\ubqnl\otimes \wtq)  +\nabla p_{t}   = F_{q+1},
\\
 & \Div \wtq = 0,
  \\
  & \wtq |_{t=0}=  0 ,
\end{alignedat}\right.
 \label{e:wt}
\end{equation}
where the force term $F_{q+1}$ is determined by $\RR^{rem}_q$, $(u_q-u_{\ell_q})$ and some terms extracted from $\partial_t (\wpq+\wcq)$ and $\Div(\wpq\otimes \wpq)$,  as  in \eqref{def-F_{q+1}}. To demonstrate the derivation of $F_{q+1}$ clearly, we decompose $\partial_t (\wpq+\wcq)$ and $\Div(\wpq\otimes \wpq)$ firstly.
\begin{proposition}[The decomposition of $\partial_t (\wpq+\wcq)$]\label{F1}Let
$\wpq$ and $\wcq$ be defined in \eqref{def-wpq} and \eqref{def-wcq} respectively, we have
\begin{align*}
    \partial_t (\wpq+\wcq)=\Div \RR^{(1)}_{q+1}+\Div \RR^{(2)}_{q+1}+F^{(1)}_{q+1}+\nabla P^{(1)}_{q+1},
\end{align*}
where
\begin{align*}
&\RR^{(1)}_{q+1}=N^{-1}_{\Lambda}\lambda^{-1}_{q+1}\sum_{k\in\Lambda}{\partial_t(a_{(k,q)}  g_{(2, \sigma_0, k)}(t))\psi'_{\lambda^{-\epsilon_0}_{q+1}}(\lambda^{1-\epsilon_0}_{q+1}N_{\Lambda}k\cdot x)(k\otimes \bar{k}+\bar{k}\otimes k)},\\
&\RR^{(2)}_{q+1}=-2N^{-2}_{\Lambda}\lambda^{-2}_{q+1}\sum_{k\in\Lambda}\psi_{\lambda^{-\epsilon_0}_{q+1}} (\lambda^{1-\epsilon_0}_{q+1}N_{\Lambda}k\cdot x)k\ootimes{\Div\big(\partial_t(a_{(k,q)}   g_{(2, \sigma_0, k)}(t))\bar{k}\otimes k\big)},\\
&F^{(1)}_{q+1}=2N^{-2}_{\Lambda}\lambda^{-2}_{q+1}\sum_{k\in\Lambda}\psi_{\lambda^{-\epsilon_0}_{q+1}} (\lambda^{1-\epsilon_0}_{q+1}N_{\Lambda}k\cdot x)k\cdot\nabla {\Div\big(\partial_t(a_{(k,q)}  g_{(2, \sigma_0, k)}(t))\bar{k}\otimes k\big)},\\
&P^{(1)}_{q+1}=-\frac{2}{3}N^{-2}_{\Lambda}\lambda^{-2}_{q+1}\sum_{k\in\Lambda}\psi_{\lambda^{-\epsilon_0}_{q+1}} (\lambda^{1-\epsilon_0}_{q+1}N_{\Lambda}k\cdot x) \Div\big(\partial_t(a_{(k,q)}   g_{(2, \sigma_0, k)}(t) \bar{k}\big).
\end{align*}
\end{proposition}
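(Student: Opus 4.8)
The statement to prove is Proposition~\ref{F1}, the decomposition of $\partial_t(\wpq+\wcq)$.

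The plan is to compute $\partial_t\wpq$ and $\partial_t\wcq$ directly from their definitions \eqref{def-wpq} and \eqref{def-wcq}, noting that the only time dependence sits in the coefficient $a_{(k,q)}(t,x)$ and the temporal concentration function $g_{(2,\sigma_0,k)}(t)$, while the spatial shear profile $\psi'_{\lambda^{-\epsilon_0}_{q+1}}(\lambda^{1-\epsilon_0}_{q+1}N_\Lambda k\cdot x)$ is $t$-independent. Writing $b_{(k,q)}:=a_{(k,q)}g_{(2,\sigma_0,k)}$ for brevity, we have $\partial_t\wpq=\sum_k N^{-1}_\Lambda\lambda^{-1}_{q+1}\,\partial_t b_{(k,q)}\,\Div\bigl(\psi'_{\lambda^{-\epsilon_0}_{q+1}}(\cdots)(k\otimes\bar k-\bar k\otimes k)\bigr)$ and $\partial_t\wcq=\sum_k N^{-1}_\Lambda\lambda^{-1}_{q+1}\psi'_{\lambda^{-\epsilon_0}_{q+1}}(\cdots)\Div\bigl(\partial_t b_{(k,q)}(k\otimes\bar k-\bar k\otimes k)\bigr)$. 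Adding these, the combination is designed so that the ``full'' double-divergence term $N^{-1}_\Lambda\lambda^{-1}_{q+1}\Div\Div\bigl(\partial_t b_{(k,q)}\psi'_{\cdots}(k\otimes\bar k-\bar k\otimes k)\bigr)$ that would make $\partial_t(\wpq+\wcq)$ a pure divergence vanishes by antisymmetry of $k\otimes\bar k-\bar k\otimes k$ (just as in the incompressibility computation already displayed after \eqref{def-wcq}); what remains is the ``cross'' Leibniz terms. Concretely, $\partial_t(\wpq+\wcq)$ equals a sum over $k$ of terms in which $\partial_t b_{(k,q)}$ is differentiated once more spatially while the shear profile is not, or vice versa — and one then has to repackage these into the four stated pieces.

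The key algebraic step is the identity, valid for the fixed unit vectors $k,\bar k$ with $k\perp\bar k$ and any scalar functions $F(t,x)$ (here $\partial_t b_{(k,q)}$) and $\Psi(x)=\psi'_{\lambda^{-\epsilon_0}_{q+1}}(\lambda^{1-\epsilon_0}_{q+1}N_\Lambda k\cdot x)$,
\begin{align*}
F\,\Div\bigl(\Psi(k\otimes\bar k-\bar k\otimes k)\bigr)+\Psi\,\Div\bigl(F(k\otimes\bar k-\bar k\otimes k)\bigr)&=\Div\bigl(F\Psi(k\otimes\bar k+\bar k\otimes k)\bigr)\\
&\quad-2\bigl((k\cdot\nabla\Psi)F\bar k+(\bar k\cdot\nabla F)\Psi k\bigr),
\end{align*}
which one checks componentwise using $\Div(fM)_j=\partial_i(fM_{ij})=(\partial_i f)M_{ij}+f\,\partial_i M_{ij}$ and $\Div(k\otimes\bar k)=(k\cdot\nabla)\bar k$-type contractions together with $\Div(\text{const matrix})=0$. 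After this, the term $(\bar k\cdot\nabla F)\Psi k$, i.e. the one carrying a spatial derivative of $\partial_t b_{(k,q)}$ along $\bar k$, is handled by the antiderivative relation $\psi'_{\lambda^{-\epsilon_0}_{q+1}}=\lambda^{1-\epsilon_0}_{q+1}\partial_{\bar{\,}}(\lambda^{-1}_{q+1}\psi_{\lambda^{-\epsilon_0}_{q+1}})$ — more precisely using $\psi_r=r^2\tfrac{d^2}{dx^2}\psi_r$ backwards, namely $\Psi(x)=N^{-1}_\Lambda\lambda^{-1}_{q+1}\,k\cdot\nabla\bigl(\psi_{\lambda^{-\epsilon_0}_{q+1}}(\lambda^{1-\epsilon_0}_{q+1}N_\Lambda k\cdot x)\bigr)$ — so that $(\bar k\cdot\nabla F)\Psi k$ becomes a divergence plus a commutator: $\Div$ of $\psi_{\cdots}k\ootimes(\ldots)$ up to a pure-trace (pressure) piece, which is exactly how $\RR^{(2)}_{q+1}$, $F^{(1)}_{q+1}$ and $P^{(1)}_{q+1}$ are generated, while the remaining $(k\cdot\nabla\Psi)F\bar k$ term — a \emph{genuine} non-divergence contribution since $\bar k\ne k$ — is absorbed into $F^{(1)}_{q+1}$. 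Matching the $k\ootimes$ (trace-free symmetric) decomposition against the trace part then pins down $P^{(1)}_{q+1}$ via $M=M\ootimes\mathrm{Id}+\tfrac13(\tr M)\mathrm{Id}$, and since $k\cdot k=1$ the trace is $\tfrac23$ times the scalar, accounting for the $-\tfrac23$ prefactor in $P^{(1)}_{q+1}$ (the extra factor $2$ in $\RR^{(2)}_{q+1}$, $F^{(1)}_{q+1}$, $P^{(1)}_{q+1}$ being the $2$ from the two equal cross terms above and the factor $N^{-2}_\Lambda\lambda^{-2}_{q+1}$ from using the antiderivative relation once).

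I expect the main obstacle to be purely bookkeeping: carefully tracking which derivatives land on $a_{(k,q)}$ versus $g_{(2,\sigma_0,k)}$ versus the shear profile, keeping the antisymmetric versus symmetric parts of $k\otimes\bar k\pm\bar k\otimes k$ straight, and correctly splitting off the trace as a pressure gradient so that $\RR^{(1)}_{q+1}$ and $\RR^{(2)}_{q+1}$ are exactly trace-free and symmetric. There is no analytic difficulty and no estimate is needed at this stage — Proposition~\ref{F1} is an exact identity — so once the bookkeeping above is executed and one verifies that $\RR^{(1)}_{q+1}$ is manifestly a constant-combination of $k\otimes\bar k+\bar k\otimes k$ (symmetric, trace-free since $k\perp\bar k$) times the scalar $\partial_t(a_{(k,q)}g_{(2,\sigma_0,k)})\psi'_{\cdots}$, while $\RR^{(2)}_{q+1}$ uses the $\ootimes$ notation to already be trace-free, the identity follows by collecting terms. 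The one subtlety worth double-checking is that the double-divergence/antisymmetry cancellation that kills the pure-divergence part of $\partial_t(\wpq+\wcq)$ leaves \emph{exactly} the stated four pieces with no leftover, which is the content of the identity displayed above applied termwise in $k$ and summed.
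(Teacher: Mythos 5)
Your plan follows the same route as the paper --- rewrite $\wpq+\wcq$ as the single divergence \eqref{wpq+wcq}, differentiate in time, split $k\otimes\bar k-\bar k\otimes k=(k\otimes\bar k+\bar k\otimes k)-2\,\bar k\otimes k$, and trade $\psi'$ for $N_\Lambda^{-1}\lambda_{q+1}^{-1}\Div(\psi\,k)$ before a final Leibniz/trace split --- but your key displayed identity is false, and the way you dispose of the resulting term contradicts the statement. With $A:=k\otimes\bar k-\bar k\otimes k$, $S:=k\otimes\bar k+\bar k\otimes k$, the Leibniz rule gives $F\Div(\Psi A)+\Psi\Div(FA)=\Div(F\Psi A)=\Div(F\Psi S)-2\Div(F\Psi\,\bar k\otimes k)$, and since $(\Div M)_j=\partial_i M_{ij}$ one has
\begin{align*}
\Div(F\Psi\,\bar k\otimes k)=\bigl((\bar k\cdot\nabla F)\,\Psi+(\bar k\cdot\nabla\Psi)\,F\bigr)k,
\end{align*}
so the exact correction to $\Div(F\Psi S)$ is $-2\bigl((\bar k\cdot\nabla F)\Psi+(\bar k\cdot\nabla\Psi)F\bigr)k$, a vector parallel to $k$. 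Your identity instead subtracts $2\bigl((k\cdot\nabla\Psi)F\,\bar k+(\bar k\cdot\nabla F)\Psi\,k\bigr)$; the two differ by $2F\bigl((k\cdot\nabla\Psi)\bar k-(\bar k\cdot\nabla\Psi)k\bigr)=2F\,\Div(\Psi A)$, which is nonzero in the case at hand because $\nabla\Psi\parallel k$, so $k\cdot\nabla\Psi\neq0$. Precisely because $\nabla\Psi\parallel k\perp\bar k$, the term $(\bar k\cdot\nabla\Psi)F$ drops and the only correction is $-2(\bar k\cdot\nabla F)\Psi\,k=-2\Psi\,\Div(F\,\bar k\otimes k)$: there is no $(k\cdot\nabla\Psi)F\,\bar k$ contribution at all. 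This is not harmless: you claim the (spurious) term ``is absorbed into $F^{(1)}_{q+1}$'', but $F^{(1)}_{q+1}$ as stated equals $2N_\Lambda^{-2}\lambda_{q+1}^{-2}\sum_k\psi_{\lambda^{-\epsilon_0}_{q+1}}(\cdot)\bigl(k\cdot\nabla(\bar k\cdot\nabla\partial_t(a_{(k,q)}g_{(2,\sigma_0,k)}))\bigr)k$, a field parallel to $k$, while your leftover is parallel to $\bar k$ and not small; it cannot be identified with any of the four stated pieces, so the computation as written does not yield the proposition.

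Once that correction term is fixed, the rest of your outline is exactly the paper's argument: pull $\psi'$ out of the divergence (legitimate since $\bar k\cdot\nabla\psi'_{\lambda^{-\epsilon_0}_{q+1}}(\lambda^{1-\epsilon_0}_{q+1}N_\Lambda k\cdot x)=0$), substitute $\psi'_{\lambda^{-\epsilon_0}_{q+1}}(\lambda^{1-\epsilon_0}_{q+1}N_\Lambda k\cdot x)=N_\Lambda^{-1}\lambda_{q+1}^{-1}\Div\bigl(\psi_{\lambda^{-\epsilon_0}_{q+1}}(\lambda^{1-\epsilon_0}_{q+1}N_\Lambda k\cdot x)\,k\bigr)$, apply the Leibniz rule to $\Div(\psi\,k)\,V$ with $V=\Div\bigl(\partial_t(a_{(k,q)}g_{(2,\sigma_0,k)})\bar k\otimes k\bigr)$ to obtain $\Div(\psi\,k\otimes V)-\psi\,k\cdot\nabla V$, and split $k\otimes V=k\ootimes V+\tfrac13(k\cdot V)\Id$ with $k\cdot V=\Div\bigl(\partial_t(a_{(k,q)}g_{(2,\sigma_0,k)})\bar k\bigr)$ to separate $\RR^{(2)}_{q+1}$ from $\nabla P^{(1)}_{q+1}$. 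So the gap is confined to the miswritten correction term and the unsupported absorption claim; with those repaired, your proof coincides with the paper's.
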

\begin{proof}Thanks to \eqref{def-wpq} and \eqref{def-wcq}, we have
\begin{align}\label{wpq+wcq}
\wpq+\wcq=\sum_{k\in\Lambda}N^{-1}_{\Lambda}\lambda^{-1}_{q+1}{\Div\big(a_{(k,q)}  g_{(2, \sigma_0, k)}(t)\psi'_{\lambda^{-\epsilon_0}_{q+1}}(\lambda^{1-\epsilon_0}_{q+1}N_{\Lambda}k\cdot x)(k\otimes \bar{k}-\bar{k}\otimes k)\big)}.
    \end{align}
Therefore, one infers that
\begin{align}
\partial_t(\wpq+\wcq)=&\sum_{k\in\Lambda}N^{-1}_{\Lambda}\lambda^{-1}_{q+1}{\Div\big(\partial_t(a_{(k,q)}   g_{(2, \sigma_0, k)}(t))\psi'_{\lambda^{-\epsilon_0}_{q+1}}(\lambda^{1-\epsilon_0}_{q+1}N_{\Lambda}k\cdot x)(k\otimes \bar{k}-\bar{k}\otimes k)\big)}\nonumber\\
=&\sum_{k\in\Lambda}N^{-1}_{\Lambda}\lambda^{-1}_{q+1}{\Div\big(\partial_t(a_{(k,q)}g_{(2, \sigma_0, k)}(t))\psi'_{\lambda^{-\epsilon_0}_{q+1}}(\lambda^{1-\epsilon_0}_{q+1}N_{\Lambda}k\cdot x)(k\otimes \bar{k}+\bar{k}\otimes k)\big)}\nonumber\\
&-2\sum_{k\in\Lambda}N^{-1}_{\Lambda}\lambda^{-1}_{q+1}{\Div\big(\partial_t(a_{(k,q)}   g_{(2, \sigma_0, k)}(t))\psi'_{\lambda^{-\epsilon_0}_{q+1}}(\lambda^{1-\epsilon_0}_{q+1}N_{\Lambda}k\cdot x)\bar{k}\otimes k\big)}.\label{ptwpq+wcq}
\end{align}
For the second term, due to $k\perp \bar{k}$ and $|k|=1$, one deduces that
\begin{align*}
&-2\sum_{k\in\Lambda}N^{-1}_{\Lambda}\lambda^{-1}_{q+1}{\Div\big(\partial_t(a_{(k,q)}   g_{(2, \sigma_0, k)}(t))\psi'_{\lambda^{-\epsilon_0}_{q+1}}(\lambda^{1-\epsilon_0}_{q+1}N_{\Lambda}k\cdot x)\bar{k}\otimes k\big)}\\
=&-2\sum_{k\in\Lambda}N^{-1}_{\Lambda}\lambda^{-1}_{q+1}\psi'_{\lambda^{-\epsilon_0}_{q+1}}(\lambda^{1-\epsilon_0}_{q+1}N_{\Lambda}k\cdot x){\Div\big(\partial_t(a_{(k,q)}  g_{(2, \sigma_0, k)}(t))\bar{k}\otimes k\big)}\\
=&-2\sum_{k\in\Lambda}N^{-2}_{\Lambda}\lambda^{-2}_{q+1}\Div\big(\psi_{\lambda^{-\epsilon_0}_{q+1}} (\lambda^{1-\epsilon_0}_{q+1}N_{\Lambda}k\cdot x)k\big){\Div\big(\partial_t(a_{(k,q)}  g_{(2, \sigma_0, k)}(t))\bar{k}\otimes k\big)}\\
=&-2N^{-2}_{\Lambda}\lambda^{-2}_{q+1}\sum_{k\in\Lambda}\Div\big(\psi_{\lambda^{-\epsilon_0}_{q+1}} (\lambda^{1-\epsilon_0}_{q+1}N_{\Lambda}k\cdot x)k\otimes{\Div\big(\partial_t(a_{(k,q)}   g_{(2, \sigma_0, k)}(t))\bar{k}\otimes k\big)}\big)\\
&+2N^{-2}_{\Lambda}\lambda^{-2}_{q+1}\sum_{k\in\Lambda}\psi_{\lambda^{-\epsilon_0}_{q+1}} (\lambda^{1-\epsilon_0}_{q+1}N_{\Lambda}k\cdot x)k\cdot\nabla {\Div\big(\partial_t(a_{(k,q)}  g_{(2, \sigma_0, k)}(t))\bar{k}\otimes k\big)}\\
=&-2N^{-2}_{\Lambda}\lambda^{-2}_{q+1}\sum_{k\in\Lambda}\Div\big(\psi_{\lambda^{-\epsilon_0}_{q+1}} (\lambda^{1-\epsilon_0}_{q+1}N_{\Lambda}k\cdot x)k\ootimes{\Div\big(\partial_t(a_{(k,q)}   g_{(2, \sigma_0, k)}(t))\bar{k}\otimes k\big)}\big)\\
&-\frac{2}{3}N^{-2}_{\Lambda}\lambda^{-2}_{q+1}\nabla\Big(\sum_{k\in\Lambda}\psi_{\lambda^{-\epsilon_0}_{q+1}} (\lambda^{1-\epsilon_0}_{q+1}N_{\Lambda}k\cdot x) \Div\big(\partial_t(a_{(k,q)}   g_{(2, \sigma_0, k)}(t))\bar{k}\big)\Big)\\
&+2N^{-2}_{\Lambda}\lambda^{-2}_{q+1}\sum_{k\in\Lambda}\psi_{\lambda^{-\epsilon_0}_{q+1}} (\lambda^{1-\epsilon_0}_{q+1}N_{\Lambda}k\cdot x)k\cdot\nabla {\Div\big(\partial_t(a_{(k,q)}  g_{(2, \sigma_0, k)}(t))\bar{k}\otimes k\big)}
\end{align*}
where we have used the fact that $v\ootimes u:=v\otimes u-\frac{1}{3}v\cdot u {\rm Id}$. Plugging this equality into \eqref{ptwpq+wcq} yields Proposition \ref{F1}.
\end{proof}

\begin{proposition}[The decomposition of $\Div(\wpq\otimes\wpq)$]\label{def-F2}Let
$\wpq$ be defined in \eqref{def-wpq-1}, we have
\begin{align*}
\Div(\wpq\otimes \wpq)+\Div\RR_{\ell_q}=\Div \RR^{(3)}_{q+1}+F^{(2)}_{q+1}+F^{(3)}_{q+1}+\nabla\big(\eta^2_q (\chi_q \rho_q)^{1/2}\big),
\end{align*}
where
\begin{align*}
&\RR^{(3)}_{q+1}=-\ii\sum_{k\in\Lambda}\sum_{m,l \in\ZZ\backslash\{0\}, m+l\neq 0}
\frac{Kc_{l, \epsilon_0}c_{m, \epsilon_0}e^{\ii2\pi  \lambda^{1-\epsilon_0}_{q+1} (l+m)  N_{\Lambda}k\cdot x/K }}{2\pi\lambda^{1-\epsilon_0}_{q+1}(l+m)} k\otimes \Div\big(a^2_{(k,q)}  g^2_{(2, \sigma_0, k)}\bar{k}\otimes \bar{k}\big)\\
&\quad\quad\quad-\ii\sum_{k\in\Lambda}\sum_{m,l \in\ZZ\backslash\{0\}, m+l\neq 0}
\frac{Kc_{l, \epsilon_0}c_{m, \epsilon_0}e^{\ii2\pi  \lambda^{1-\epsilon_0}_{q+1} (l+m)  N_{\Lambda}k\cdot x/K }}{2\pi\lambda^{1-\epsilon_0}_{q+1}(l+m)} \bar{k}\otimes \Div\big(a^2_{(k,q)}  g^2_{(2, \sigma_0, k)}\bar{k}\otimes {k}\big),\\
&F^{(2)}_{q+1}=\ii\sum_{k\in\Lambda}\sum_{m,l\in\ZZ\backslash\{0\}, m+l\neq 0}
\frac{Kc_{l, \epsilon_0}c_{m, \epsilon_0}e^{\ii 2\pi  \lambda^{1-\epsilon_0}_{q+1} (l+m)N_{\Lambda} k\cdot x /K}}{2\pi N_{\Lambda}\lambda^{1-\epsilon_0}_{q+1}(l+m)} k\cdot\nabla \Div\big(a^2_{(k,q)}g^2_{(2, \sigma_0, k)} \bar{k}\otimes \bar{k}\big)\\
&\quad\quad\quad+\ii\sum_{k\in\Lambda}\sum_{m,l\in\ZZ\backslash\{0\}, m+l\neq 0}
\frac{Kc_{l, \epsilon_0}c_{m, \epsilon_0}e^{\ii 2\pi  \lambda^{1-\epsilon_0}_{q+1} (l+m)N_{\Lambda} k\cdot x /K}}{2\pi N_{\Lambda}\lambda^{1-\epsilon_0}_{q+1}(l+m)}\bar{k}\cdot\nabla \Div\big(a^2_{(k,q)}g^2_{(2, \sigma_0, k)} \bar{k}\otimes  k\big),\\
&F^{(3)}_{q+1}=\sum_{k\in\Lambda} \Div\big(a^2_{(k,q)} \mathbb{P}_{\neq 0}(g^2_{(2, \sigma_0, k)}) \bar{k}\otimes \bar{k}\big).
\end{align*}
\end{proposition}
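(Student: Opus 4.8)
\textbf{Proof plan for Proposition \ref{def-F2}.} The statement is an algebraic identity: we must expand $\Div(\wpq\otimes\wpq)$ using the explicit form \eqref{def-wpq-1} of the principal perturbation, and show that after isolating the low-frequency part that cancels $\Div\RR_{\ell_q}$, the remainder splits into a divergence term $\Div\RR^{(3)}_{q+1}$, two lower-order terms $F^{(2)}_{q+1}, F^{(3)}_{q+1}$, and a pressure gradient. The main tool is the Fourier expansion of $\phi_r$ from Proposition \ref{est-cmr}, together with the geometric identity $\sum_{k\in\Lambda}a_k^2(R)\bar k\otimes\bar k = R$ from Lemma \ref{first S} applied to $R = \mathrm{Id} - \RR_{\ell_q}/(\chi_q\rho_q)$.

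First I would write $\wpq\otimes\wpq = \sum_{k,k'\in\Lambda} a_{(k,q)}a_{(k',q)}\,\phi_{(\epsilon_0,1-\epsilon_0,k)}\phi_{(\epsilon_0,1-\epsilon_0,k')}\,g_{(2,\sigma_0,k)}g_{(2,\sigma_0,k')}\,\bar k\otimes\bar k'$. Because the temporal cutoffs were chosen (just before \eqref{P=0}) so that $g_{(2,\sigma_0,k)}g_{(2,\sigma_0,k')}=0$ for $k\ne k'$, only the diagonal $k=k'$ survives, leaving $\sum_{k\in\Lambda} a_{(k,q)}^2\,\phi_{(\epsilon_0,1-\epsilon_0,k)}^2\,g_{(2,\sigma_0,k)}^2\,\bar k\otimes\bar k$. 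Next I would split $g_{(2,\sigma_0,k)}^2 = 1 + \PP(g_{(2,\sigma_0,k)}^2)$ using the normalization \eqref{P=0}; the $\PP$-piece, being a zero-mean high-frequency-in-time factor times a smooth matrix field, is exactly $F^{(3)}_{q+1}$ after pulling out the divergence (using that $\bar k\otimes\bar k$ is a constant matrix, so $\Div(a_{(k,q)}^2\PP(g^2)\bar k\otimes\bar k)$ is the full contribution). For the piece with $g^2$ replaced by $1$, I would expand $\phi_{(\epsilon_0,1-\epsilon_0,k)}^2$ via its Fourier series: writing $\phi_r(x) = \sum_{m\ne 0} c_{m,r}e^{\ii 2\pi mx/K}$, the square is $\sum_{m,l\ne 0} c_{m,\epsilon_0}c_{l,\epsilon_0}e^{\ii 2\pi(m+l)\lambda_{q+1}^{1-\epsilon_0}N_\Lambda k\cdot x/K}$ (writing $c_{m,\epsilon_0}$ for the coefficient at scale $r=\lambda_{q+1}^{-\epsilon_0}$). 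Separating the resonant part $m+l=0$ from the non-resonant part $m+l\ne 0$: the resonant part contributes $\sum_{m\ne 0}|c_{m,\epsilon_0}|^2 = \frac{1}{|\TTT|}\int_{\TTT}\phi_r^2 = 1$ by Parseval and the normalization, giving $\sum_{k\in\Lambda} a_{(k,q)}^2\,\bar k\otimes\bar k$; by the choice of amplitudes \eqref{def-akq} and the geometric Lemma \ref{first S}, this equals $\eta_q^2\chi_q\rho_q\,(\mathrm{Id} - \RR_{\ell_q}/(\chi_q\rho_q)) = \eta_q^2\chi_q\rho_q\,\mathrm{Id} - \RR_{\ell_q}$ (using \eqref{R=Rq}), whose divergence is $\nabla(\eta_q^2\chi_q\rho_q) - \Div\RR_{\ell_q}$, matching the claimed pressure term $\nabla(\eta_q^2(\chi_q\rho_q)^{1/2})$ up to how one bookkeeps the scalar — here one uses that $(\chi_q\rho_q)^{1/2}$ is the intended normalization in \eqref{def-akq}, and absorbing the discrepancy into the pressure is harmless.

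It remains to handle the non-resonant sum $m+l\ne 0$. For each such term the exponential $e^{\ii 2\pi(m+l)\lambda_{q+1}^{1-\epsilon_0}N_\Lambda k\cdot x/K}$ is a gradient: $\nabla e^{\ii\theta k\cdot x} = \ii\theta\, e^{\ii\theta k\cdot x}\,k$ with $\theta = 2\pi(m+l)\lambda_{q+1}^{1-\epsilon_0}N_\Lambda/K$. Thus for a smooth matrix field $A$ one has the identity $\Div(A\,e^{\ii\theta k\cdot x}) = e^{\ii\theta k\cdot x}\Div A + \ii\theta\, e^{\ii\theta k\cdot x}A k$, i.e. $\Div A\,e^{\ii\theta k\cdot x} = \frac{1}{\ii\theta}\big(\Div(e^{\ii\theta k\cdot x}A k) - e^{\ii\theta k\cdot x}\,k\cdot\nabla(\cdots)\big)$ — more precisely, I would rewrite $e^{\ii\theta k\cdot x}\,a_{(k,q)}^2\,\bar k\otimes\bar k$ as $\frac{1}{\ii\theta}\Div\big(e^{\ii\theta k\cdot x}\,k\otimes(\cdots)\big)$ minus a correction. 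Applying this twice — once to lower the outer $\Div$ and once more inside — produces exactly the structure in $\RR^{(3)}_{q+1}$ (the term $\frac{Kc_{l,\epsilon_0}c_{m,\epsilon_0}}{2\pi\lambda_{q+1}^{1-\epsilon_0}(l+m)}\,k\otimes\Div(a_{(k,q)}^2 g_{(2,\sigma_0,k)}^2\bar k\otimes\bar k)$, with the factor $\ii$ and the $1/(l+m)$ coming from inverting $\ii\theta$) plus the leftover $k\cdot\nabla\Div(\cdots)$ terms collected into $F^{(2)}_{q+1}$. The bookkeeping of which index carries $\bar k\otimes\bar k$ versus $\bar k\otimes k$ comes from whether one integrates by parts in the first or the second tensor slot; both choices appear, which is why $\RR^{(3)}_{q+1}$ and $F^{(2)}_{q+1}$ each have two summands.

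The main obstacle is purely organizational rather than conceptual: one must carefully track the two layers of integration by parts in $x$ against the exponential $e^{\ii 2\pi(m+l)\lambda_{q+1}^{1-\epsilon_0}N_\Lambda k\cdot x/K}$, correctly separating genuine-divergence contributions (into $\RR^{(3)}_{q+1}$), the scalar-gradient/pressure contribution, and the transport-type remainders $k\cdot\nabla\Div(\cdots)$ (into $F^{(2)}_{q+1}$) — and to verify that the $\ii$, the factor $K/(2\pi\lambda_{q+1}^{1-\epsilon_0}(l+m))$, and the placement of $N_\Lambda$ come out exactly as written. One should also double-check the reindexing $\phi_r^2 \mapsto \sum_{m,l}$ respects $\phi_r$ being $\TTT$-periodic at the correct scale, and that the constant term $m=0$ or $l=0$ is genuinely absent (since $\int\phi_r = 0$ forces $c_{0,r}=0$, as $\phi = \psi''$ has zero mean). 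No new estimates are needed for this proposition; the bounds on $\RR^{(3)}_{q+1}, F^{(2)}_{q+1}, F^{(3)}_{q+1}$ using Proposition \ref{est-cmr} and Proposition \ref{guji1} are deferred to a later stage.
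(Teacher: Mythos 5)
Your overall route --- diagonalize $\wpq\otimes\wpq$ using the disjoint temporal supports of the $g_{(2,\sigma_0,k)}$, peel off the means via \eqref{P=0}, apply Lemma \ref{first S} to the resonant part, and integrate by parts against the nonzero Fourier modes of $\phi^2_{(\epsilon_0,1-\epsilon_0,k)}$ --- is the same as the paper's. However, the order in which you peel off the two oscillations is wrong as written, and it matters for the stated formulas. You split $g^2_{(2,\sigma_0,k)}=1+\PP(g^2_{(2,\sigma_0,k)})$ first and declare the $\PP$-piece to be $F^{(3)}_{q+1}=\sum_{k}\Div\big(a^2_{(k,q)}\PP(g^2_{(2,\sigma_0,k)})\bar k\otimes\bar k\big)$; but in your expansion that piece still carries the factor $\phi^2_{(\epsilon_0,1-\epsilon_0,k)}$, so either you have silently dropped it (and then your pieces do not sum back to $a^2_{(k,q)}\phi^2_{(\epsilon_0,1-\epsilon_0,k)}g^2_{(2,\sigma_0,k)}$: the cross term $a^2_{(k,q)}\PP(\phi^2_{(\epsilon_0,1-\epsilon_0,k)})\PP(g^2_{(2,\sigma_0,k)})$ is unaccounted for), or you keep it and the term is not the stated $F^{(3)}_{q+1}$. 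Likewise, treating the spatial oscillation only \emph{with $g^2$ replaced by $1$} produces versions of $\RR^{(3)}_{q+1}$ and $F^{(2)}_{q+1}$ without the factor $g^2_{(2,\sigma_0,k)}$, whereas the stated ones contain it. The paper splits in the opposite order: $\phi^2 g^2= g^2+g^2\,\PP(\phi^2)$, so the whole $g^2$ stays with the spatial oscillation (giving exactly $\RR^{(3)}_{q+1}$ and $F^{(2)}_{q+1}$), and only then $g^2=1+\PP(g^2)$ in the spatially resonant piece, which gives $F^{(3)}_{q+1}$ with no $\phi$-factor. Once you reverse the order, the rest of your plan goes through; note that passing $\Div$ across $\PP(\phi^2_{(\epsilon_0,1-\epsilon_0,k)})$ uses $k\perp\bar k$ (the gradient of the exponential is parallel to $k$ and is annihilated by $\bar k\otimes\bar k$), which is implicit in your identity $\Div(Ae^{\ii\theta k\cdot x})=e^{\ii\theta k\cdot x}\Div A+\ii\theta e^{\ii\theta k\cdot x}Ak$ with $Ak=0$.

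Two smaller points. The two summands of $\RR^{(3)}_{q+1}$ are not obtained by \emph{integrating by parts in the other tensor slot}: the divergence of $e^{\ii\theta k\cdot x}\,\bar k\otimes\Div(a^2_{(k,q)}g^2_{(2,\sigma_0,k)}\bar k\otimes k)$ gains no factor $\theta^{-1}$, because the exponential is constant along $\bar k$. The second summand is an add-and-subtract symmetrization of the stress, and its divergence is exactly (minus) the second summand of $F^{(2)}_{q+1}$, again by $k\perp\bar k$; this cancellation must be stated explicitly, otherwise your decomposition produces only the first summands of $\RR^{(3)}_{q+1}$ and $F^{(2)}_{q+1}$. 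Finally, your resonant-part computation gives $\nabla(\eta^2_q\chi_q\rho_q)$, which is indeed what \eqref{def-akq}, Lemma \ref{first S} and \eqref{R=Rq} yield; the exponent $1/2$ in the stated pressure term appears to be a typo carried by the paper, and since this term is a pure gradient absorbed into $p_{q+1}$ the discrepancy is harmless, so your handling of that point is acceptable.
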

\begin{proof} Thanks to \eqref{P=0}, one has
\begin{align*}
\mathbb{P}_{=0}(\phi^2_{(\epsilon_0, 1-\epsilon_0, k)})=1,\quad\text{and}\quad
    \mathbb{P}_{=0}(g^2_{(2, \sigma_0, k)}(t))=1.
\end{align*}
According to \eqref{def-wpq-1},
we obtain that
\begin{align*}
\Div(\wpq\otimes \wpq)+\Div\RR_{\ell_q}
=&\sum_{k\in\Lambda}\Div\big(a^2_{(k,q)} \phi^2_{(\epsilon_0, 1-\epsilon_0, k)} g^2_{(2, \sigma_0, k)}\bar{k}\otimes \bar{k}\big)\\
=&\sum_{k\in\Lambda} \Div\big(a^2_{(k,q)} \bar{k}\otimes \bar{k}\big)+\Div\RR_{\ell_q}+\sum_{k\in\Lambda} \Div\big(a^2_{(k,q)} \mathbb{P}_{\neq 0}(g^2_{(2, \sigma_0, k)}) \bar{k}\otimes \bar{k}\big)\\
&+\sum_{k\in\Lambda}\Div\big(a^2_{(k,q)} g^2_{(2, \sigma_0, k)}\mathbb{P}_{\neq0}(\phi^2_{(\epsilon_0, 1-\epsilon_0, k)}) \bar{k}\otimes \bar{k}\big).
\end{align*}
By  the definition of $a_{(k,q)}$ in \eqref{def-akq} and Lemma \ref{first S}, we have
\begin{align*}
&\sum_{k\in\Lambda} \Div\big(a^2_{(k,q)} \bar{k}\otimes \bar{k}\big)+\Div\RR_{\ell_q}=\Div\big( \eta^2_q(\chi_q \rho_q)^{1/2}{\rm Id }-\eta^2_q\RR_{\ell_q}\big)+\Div\RR_{\ell_q}
=\nabla\big(\eta^2_q (\chi_q \rho_q)^{1/2}\big),
\end{align*}
where we have used \eqref{R=Rq}. By Proposition \ref{est-cmr}, one has
\begin{align*}
&\sum_{k\in\Lambda} \Div\big(a^2_{(k,q)} g^2_{(2, \sigma_0, k)}\mathbb{P}_{\neq0}(\phi^2_{(\epsilon_0, 1-\epsilon_0, k)}) \bar{k}\otimes \bar{k}\big)\\
=&\sum_{k\in\Lambda} \Div\big(a^2_{(k,q)}g^2_{(2, \sigma_0, k)}\bar{k}\otimes \bar{k}\sum_{m,l\in \ZZ  \backslash\{0\}, m+l\neq 0} c_{l, \epsilon_0}c_{m, \epsilon_0}e^{\ii 2\pi \lambda^{1-\epsilon_0}_{q+1} (l+m)N_{\Lambda} k\cdot x/K } \big)\\
=&\sum_{k\in\Lambda} \Div\big(a^2_{(k,q)} g^2_{(2, \sigma_0, k)}\bar{k}\otimes \bar{k}\big)\sum_{m,l\in \ZZ  \backslash\{0\}, m+l\neq 0} c_{l, \epsilon_0}c_{m, \epsilon_0}e^{\ii 2\pi \lambda^{1-\epsilon_0}_{q+1} (l+m) N_{\Lambda}  k\cdot x /K } \\
=&-\ii\sum_{k\in\Lambda} \Div\big(a^2_{(k,q)} g^2_{(2, \sigma_0, k)}\bar{k}\otimes \bar{k}\big)\sum_{m,l\in \ZZ  \backslash\{0\}, m+l\neq 0}\frac{K\Div\big(c_{l, \epsilon_0}c_{m, \epsilon_0}e^{\ii2\pi\lambda^{1-\epsilon_0}_{q+1} (l+m) N_{\Lambda} k\cdot x /K}k\big)}{2\pi N_{\Lambda}\lambda^{1-\epsilon_0}_{q+1}(l+m)}\\
=&\Div\Big(-\ii\sum_{k\in\Lambda}\sum_{m,l \in\ZZ\backslash\{0\}, m+l\neq 0}
\frac{Kc_{l, \epsilon_0}c_{m, \epsilon_0}e^{\ii2\pi  \lambda^{1-\epsilon_0}_{q+1} (l+m)  N_{\Lambda}k\cdot x/K }}{2\pi\lambda^{1-\epsilon_0}_{q+1}(l+m)} k\otimes \Div\big(a^2_{(k,q)}  g^2_{(2, \sigma_0, k)}\bar{k}\otimes \bar{k}\big)\\
&{-\ii\sum_{k\in\Lambda}\sum_{m,l \in\ZZ\backslash\{0\}, m+l\neq 0}
\frac{Kc_{l, \epsilon_0}c_{m, \epsilon_0}e^{\ii2\pi  \lambda^{1-\epsilon_0}_{q+1} (l+m)  N_{\Lambda}k\cdot x/K }}{2\pi\lambda^{1-\epsilon_0}_{q+1}(l+m)} \bar{k}\otimes \Div\big(a^2_{(k,q)}  g^2_{(2, \sigma_0, k)}\bar{k}\otimes {k}\big)}\Big)\\
&+\ii\sum_{k\in\Lambda}\sum_{m,l\in\ZZ\backslash\{0\}, m+l\neq 0}
\frac{Kc_{l, \epsilon_0}c_{m, \epsilon_0}e^{\ii 2\pi  \lambda^{1-\epsilon_0}_{q+1} (l+m)N_{\Lambda} k\cdot x /K}}{2\pi N_{\Lambda}\lambda^{1-\epsilon_0}_{q+1}(l+m)} k\cdot\nabla \Div\big(a^2_{(k,q)}g^2_{(2, \sigma_0, k)} \bar{k}\otimes \bar{k}\big)\\
&+{\ii\sum_{k\in\Lambda}\sum_{m,l\in\ZZ\backslash\{0\}, m+l\neq 0}
\frac{Kc_{l, \epsilon_0}c_{m, \epsilon_0}e^{\ii 2\pi  \lambda^{1-\epsilon_0}_{q+1} (l+m)N_{\Lambda} k\cdot x /K}}{2\pi N_{\Lambda}\lambda^{1-\epsilon_0}_{q+1}(l+m)} \bar{k}\cdot\nabla \Div\big(a^2_{(k,q)}g^2_{(2, \sigma_0, k)} \bar{k}\otimes {k}\big)}\\
=:&\Div R^{(3)}_{q+1}+F^{(2)}_{q+1}.
\end{align*}
Hence, we have
\begin{align*}
&\quad\,\,\Div(\wpq\otimes \wpq)+\Div\RR_{\ell_q}\\
&=\Div R^{(3)}_{q+1}+F^{(2)}_{q+1}+\sum_{k\in\Lambda} \Div\big(a^2_{(k,q)} \mathbb{P}_{\neq 0}(g^2_{(2, \sigma_0, k)}) \bar{k}\otimes \bar{k}\big)+\nabla\big(\eta^2_q (\chi_q \rho_q)^{1/2}\big)\\
&=:\Div R^{(3)}_{q+1}+F^{(2)}_{q+1}+F^{(3)}_{q+1}+\nabla\big(\eta^2_q (\chi_q \rho_q)^{1/2}\big).
\end{align*}
This completes the proof of Proposition \ref{def-F2}.
\end{proof}
Now we give the definition of $\wtq$ in more details. Let $\wtq$ be the solution of the equations \eqref{e:wt}, where  $F_{q+1}$ is given by
\begin{align}
    F_{q+1}:=&-F^{(1)}_{q+1}-F^{(2)}_{q+1}-F^{(3)}_{q+1}-(1-\zeta_q)\Div \RR^{rem}_q-\partial_t\zeta_q(u_q-u_{\ell_q})\nonumber\\
    &-\zeta_q(1-\zeta_q)\Div ((u_q-u_{\ell_q})\otimes(u_q-u_{\ell_q})). \label{def-F_{q+1}}
\end{align}
Then we define the perturbation $w_{q+1}$ by
\[w_{q+1}=\wpq+\wcq+\wtq.\]
\subsubsection{Estimates for the perturbation}In order to estimate  $w_{q+1}$, we give some  preliminary estimates in terms of $\chi_q\rho_q$, $a_{(k,q)}$ and $F_{q+1}$.

Note that  the smooth function $\chi \ge1$ in \eqref{def-chi}, one can expect to obtain higher-order derivative estimates for $(\chi_q\rho_q)^{1/2}$ and  $(\chi_q\rho_q)^{-1}$, which is shown by Proposition \ref{cr1/2} as follows.
{\begin{proposition}[Estimates for $(\chi_q\rho_q)^{1/2}$ and $(\chi_q\rho_q)^{-1}$]\label{cr1/2}For any $N\ge 2$, we have
\begin{align}
&\|(\chi_q\rho_q)^{1/2}\|_{L^\infty_tH^N}\le    C_N(TK^3)^{-1/2}\delta^{-N+1/2}_{q+1}\lambda^{5N+4N\alpha}_q\ell^{-N}_q, \label{chi-H4}\\
& \|(\chi_q\rho_q)^{-1}\|_{L^\infty_tH^N}
\le C_NTK^3\delta^{-N-1}_{q+1}\lambda^{5N+4N\alpha}_q\ell^{-N}_q,   \label{est-cr-1}\\
&\|\partial_t(\chi_q\rho_q)^{1/2}\|_{L^\infty_tH^N}\le  C_N(TK^3)^{-1/2}\delta^{-N-1/2}_{q+1}\lambda^{5N+5+4(N+1)\alpha}_q\ell^{-2N}_q,\label{t-chi-H4}\\
&\|\partial_t(\chi_q\rho_q)^{-1}\|_{L^\infty_tH^N}
\le C_NTK^3\delta^{-N-2}_{q+1}\lambda^{5N+5+4(N+1)\alpha}_q\ell^{-2N}_q. \label{est-t-cr-1}
\end{align}
\end{proposition}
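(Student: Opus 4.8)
The plan is to reduce everything to the two basic building blocks: the explicit formula $\chi_q\rho_q = \frac{1}{TK^3}\chi\bigl(\langle \RR_{\ell_q}/(\delta_{q+1}\lambda_q^{-4\alpha})\rangle\bigr)\,\rho_q(t)$, together with the mollification bounds \eqref{e:R_ell}--\eqref{e:R_ell-W} for $\RR_{\ell_q}$ and the elementary fact $\rho_q(t)\in[\delta_{q+1}/8,\delta_{q+1}]$ coming from \eqref{rho-q}, \eqref{rho-bar} and the bound on $\bar\rho_q$. The key structural point is that $\chi$ is smooth and satisfies $1\le\chi$ with $\chi(z)=z$ for $z\ge 2$, so $\chi_q\rho_q$ is bounded below by $c\,\delta_{q+1}(TK^3)^{-1}$ and the composition $F\mapsto \chi(\langle F\rangle)$ admits Gagliardo--Nirenberg / Faà di Bruno estimates in Sobolev spaces. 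This is exactly the mechanism used in \cite{1Cheskidov}; the only novelty here is keeping track of the $\lambda_q,\ell_q$ powers from \eqref{e:R_ell-W} and the extra $TK^3$ factors.

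First I would prove the lower bound $\chi_q\rho_q\gtrsim \delta_{q+1}(TK^3)^{-1}$ pointwise, which is immediate from $\chi\ge 1$ and $\rho_q\ge\delta_{q+1}/8$; simultaneously $\chi_q\rho_q\lesssim (TK^3)^{-1}\delta_{q+1}(1+|\RR_{\ell_q}|/(\delta_{q+1}\lambda_q^{-4\alpha}))$. Then, to estimate $\|(\chi_q\rho_q)^{1/2}\|_{L^\infty_t H^N}$, I would apply the chain rule: each spatial derivative falling on $\chi(\langle \RR_{\ell_q}/(\delta_{q+1}\lambda_q^{-4\alpha})\rangle)^{1/2}$ produces, after Faà di Bruno, a sum of products of derivatives of $\RR_{\ell_q}$ divided by powers of $\delta_{q+1}\lambda_q^{-4\alpha}$, multiplied by bounded derivatives of $z\mapsto \chi(z)^{1/2}$ (bounded because $\chi\ge 1$ stays away from $0$). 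Using \eqref{e:R_ell-W}, $\|\partial^L_t\RR_{\ell_q}\|_{L^\infty_tW^{N+3,1}}\lesssim\lambda_q^5\ell_q^{-N-L}$, together with the Gagliardo--Nirenberg interpolation to control $W^{N,1}$-type products in $L^2$ (one works with $W^{N,\infty}$ bounds via $\|\RR_{\ell_q}\|_{W^{N,\infty}}\lesssim\|\RR_{\ell_q}\|_{W^{N+3,1}}$ by Sobolev embedding on the compact support $\Omega_{q+1}$), and the lower bound on $\chi_q\rho_q$, one collects: the worst term has all $N$ derivatives on a single factor $\RR_{\ell_q}$, giving $\delta_{q+1}^{1/2}(TK^3)^{-1/2}\cdot(\delta_{q+1}\lambda_q^{-4\alpha})^{-N}\lambda_q^5\ell_q^{-N}$, which is $\lesssim (TK^3)^{-1/2}\delta_{q+1}^{-N+1/2}\lambda_q^{5N+4N\alpha}\ell_q^{-N}$ after absorbing the $\lambda_q^5$ factors into $\lambda_q^{5N}$ and the $\lambda_q^{4N\alpha}$ from the denominators. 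That is \eqref{chi-H4}. The bound \eqref{est-cr-1} for $(\chi_q\rho_q)^{-1}$ is entirely analogous, except one starts from $(\chi_q\rho_q)^{-1}\lesssim TK^3\delta_{q+1}^{-1}$ pointwise, so the prefactor flips and each additional derivative costs another factor $(\chi_q\rho_q)^{-1}\lesssim TK^3\delta_{q+1}^{-1}$ from differentiating an inverse, explaining the $\delta_{q+1}^{-N-1}$ and $(TK^3)^{+1}$ in the claimed bound.

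For the time-derivative estimates \eqref{t-chi-H4} and \eqref{est-t-cr-1}, I would note $\partial_t(\chi_q\rho_q) = \frac{1}{TK^3}\bigl[\chi'(\langle\cdots\rangle)\langle\cdots\rangle'\,\partial_t\RR_{\ell_q}/(\delta_{q+1}\lambda_q^{-4\alpha})\,\rho_q + \chi(\langle\cdots\rangle)\,\rho_q'(t)\bigr]$; the $\rho_q'$ term is harmless since $|\rho_q'|\lesssim\delta_{q+1}T^{-1}\lesssim\delta_{q+1}\lambda_q$, while the first term contributes $\|\partial_t\RR_{\ell_q}\|_{L^\infty_tW^{N+3,1}}\lesssim\lambda_q^5\ell_q^{-N-1}$ by \eqref{e:R_ell-W} with $L=1$. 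Running the same Faà di Bruno bookkeeping for $\partial_t(\chi_q\rho_q)^{1/2}=\tfrac12(\chi_q\rho_q)^{-1/2}\partial_t(\chi_q\rho_q)$ and $\partial_t(\chi_q\rho_q)^{-1}=-(\chi_q\rho_q)^{-2}\partial_t(\chi_q\rho_q)$, the extra $\ell_q^{-1}$ from the time mollification at each of up to $N$ spatial derivatives produces the $\ell_q^{-2N}$, the extra $(\chi_q\rho_q)^{-1/2}$ or $(\chi_q\rho_q)^{-1}$ shifts the $\delta_{q+1}$ power down by one, and the extra $\lambda_q^5$ (from $\partial_t\RR_{\ell_q}$) together with the $\lambda_q^{4\alpha}$ per denominator gives the $\lambda_q^{5N+5+4(N+1)\alpha}$ exponent.

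The main obstacle is not conceptual but bookkeeping: one must handle the composition $\chi(\langle F\rangle)^{\pm1/2}$ (note the Japanese bracket $\langle F\rangle=(1+|F|^2)^{1/2}$ makes $z\mapsto\chi(\langle\cdot\rangle)$ genuinely smooth even though $|F|$ is not) and carefully track that Sobolev embedding is applied on the fixed compact cube $\Omega_{q+1}$ so the embedding constant is absorbed into $C_N$, and that all the products of lower-order derivative factors $\|\partial^{j}\RR_{\ell_q}\|$ with $\sum j=N$ are dominated, by Gagliardo--Nirenberg and the submultiplicative structure of the bounds $\lambda_q^5\ell_q^{-j}$, by the single term with $j=N$. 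Once that reduction to the extreme term is justified, the stated exponents fall out by direct multiplication of the powers of $TK^3$, $\delta_{q+1}$, $\lambda_q$, $\ell_q$ recorded above.
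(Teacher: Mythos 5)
Your proposal follows essentially the same route as the paper: the paper also starts from the formula $\chi_q\rho_q=\tfrac{1}{TK^3}\chi\big(\langle\RR_{\ell_q}/(\delta_{q+1}\lambda_q^{-4\alpha})\rangle\big)\rho_q$, uses $\chi\ge 1$ and $\rho_q\sim\delta_{q+1}$ so that the compositions $z^{1/2}$ and $z^{-1}$ are smooth, and then applies the standard Moser-type composition bound $\|G(F)\|_{H^N}\lesssim(1+\|F\|_{L^\infty})^{N-1}\|F\|_{H^N}$ together with \eqref{e:R_ell-W} and $|\partial_t\rho_q|\lesssim T^{-1}\delta_{q+1}$ — exactly the Faà di Bruno/Gagliardo–Nirenberg bookkeeping you describe, with the time derivatives handled by the product rule as in your last paragraph. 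Your identification of the "worst" Faà di Bruno term (all derivatives on one factor, rather than the term $G^{(N)}(F)(DF)^N$ which carries $(\delta_{q+1}^{-1}\lambda_q^{5+4\alpha})^{N}$) is slightly off, but since every term is dominated by the stated right-hand sides this does not affect the conclusion.
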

\begin{proof}
Since $\chi(x)\ge1$, $\rho_q\sim \delta_{q+1}$  and
    \begin{align*}
    \chi_q\rho_q=\frac{1}{TK^3}\chi\Big(\Big\langle\tfrac{\RR_{\ell_q}}{\delta_{q+1}\lambda^{-4\alpha}_q}\Big\rangle\Big)\rho_q,
    \end{align*}
we infer from \eqref{e:R_ell-W} that
\begin{align*}
\|(\chi_q\rho_q)^{1/2}\|_{L^\infty_tH^N}=&\|\chi^{1/2}_q\|_{L^\infty_t H^N}\|\rho_q^{1/2}(t)\|_{L^\infty}\\
\le&C_N(TK^3)^{-1/2}\delta^{1/2}_{q+1}\Big(1+\Big{\|}\frac{\RR_{\ell_q}}{\delta_{q+1}\lambda^{-4\alpha}_q}\Big{\|}_{L^\infty_{t,x}}\Big)^{N-1}\Big{\|}\frac{\RR_{\ell_q}}{\delta_{q+1}\lambda^{-4\alpha}_q}\Big{\|}_{L^\infty_tH^N}\\
\le& C_N(TK^3)^{-1/2}\delta^{-N+1/2}_{q+1}\lambda^{5N+4N\alpha}_q\ell^{-N}_q
\end{align*}
and
\begin{align}
 \|(\chi_q\rho_q)^{-1}\|_{L^\infty_tH^N}\le& C_NTK^3\delta^{-1}_{q+1}\Big(1+\Big{\|}\frac{\RR_{\ell_q}}{\delta_{q+1}\lambda^{-4\alpha}_q}\Big{\|}_{L^\infty_{t,x}}\Big)^{N-1}\Big{\|}\frac{\RR_{\ell_q}}{\delta_{q+1}\lambda^{-4\alpha}_q}\Big{\|}_{L^\infty_tH^N}\nonumber\\
\le& C_NTK^3\delta^{-N-1}_{q+1}\lambda^{5N+4N\alpha}_q\ell^{-N}_q. \label{es:cr-1}
\end{align}
With the aid of \eqref{e:R_ell-W} and $T^{-1}\le \lambda_q$, one deduces  that
\begin{align*}
&\|\partial_t(\chi_q\rho_q)^{1/2}\|_{L^\infty_tH^N}\\
\le&\|\partial_t\chi_q^{1/2}\|_{L^\infty_tH^N}\|\rho_q^{1/2}(t)\|_{L^\infty}+
\|\chi_q^{1/2}\|_{L^\infty_tH^N}\|\partial_t\rho_q^{1/2}(t)\|_{L^\infty}\\
\le& C_N(TK^3)^{-1/2}\delta^{1/2}_{q+1}\Big{\|}\frac{\partial_t\RR_{\ell_q}}{\delta_{q+1}\lambda^{-4\alpha}_q}\Big{\|}_{L^\infty_tH^N}\Big(1+\Big{\|}\frac{\RR_{\ell_q}}{\delta_{q+1}\lambda^{-4\alpha}_q}\Big{\|}_{L^\infty_{t,x}}\Big)^{N-1}\Big{\|}\frac{\RR_{\ell_q}}{\delta_{q+1}\lambda^{-4\alpha}_q}\Big{\|}_{L^\infty_tH^N}\\
&+ C_N(TK^3)^{-1/2}T^{-1}\delta^{1/2}_{q+1}\Big(1+\Big{\|}\frac{\RR_{\ell_q}}{\delta_{q+1}\lambda^{-4\alpha}_q}\Big{\|}_{L^\infty_{t,x}}\Big)^{N-1}\Big{\|}\frac{\RR_{\ell_q}}{\delta_{q+1}\lambda^{-4\alpha}_q}\Big{\|}_{L^\infty_tH^N}\\
\le& C_N(TK^3)^{-1/2}\delta^{-N-1/2}_{q+1}\lambda^{5N+5+4(N+1)\alpha}_q\ell^{-2N}_q,
\end{align*}
and
\begin{align*}
&\|\partial_t(\chi_q\rho_q)^{-1}\|_{L^\infty_tH^N}\\
\le&\|\partial_t\chi_q^{-1}\|_{L^\infty_tH^N}\|\rho_q^{-1}(t)\|_{L^\infty}+
\|\chi_q^{-1}\|_{L^\infty_tH^N}\|\partial_t\rho_q^{-1}(t)\|_{L^\infty}\\
\le& C_NTK^3\delta^{-1}_{q+1}\Big{\|}\frac{\partial_t\RR_{\ell_q}}{\delta_{q+1}\lambda^{-4\alpha}_q}\Big{\|}_{L^\infty_tH^N}\Big(1+\Big{\|}\frac{\RR_{\ell_q}}{\delta_{q+1}\lambda^{-4\alpha}_q}\Big{\|}_{L^\infty_{t,x}}\Big)^{N-1}\Big{\|}\frac{\RR_{\ell_q}}{\delta_{q+1}\lambda^{-4\alpha}_q}\Big{\|}_{L^\infty_tH^N}\\
&+ C_NTK^3T^{-1}\delta^{-1}_{q+1}\Big(1+\Big{\|}\frac{\RR_{\ell_q}}{\delta_{q+1}\lambda^{-4\alpha}_q}\Big{\|}_{L^\infty_{t,x}}\Big)^{N-1}\Big{\|}\frac{\RR_{\ell_q}}{\delta_{q+1}\lambda^{-4\alpha}_q}\Big{\|}_{L^\infty_tH^N}\\
\le& C_NTK^3\delta^{-N-2}_{q+1}\lambda^{5N+5+4(N+1)\alpha}_q\ell^{-2N}_q.
\end{align*}
Hence we complete the proof of Proposition \ref{cr1/2}.
\end{proof}}

\begin{proposition}[Estimates for ${a}_{(k,q)}$]\label{est-ak}{For $N\ge 2$, we have
\begin{align}
&\|{a}_{(k,q)}\|_{L^\infty_t H^N}\le C_N\ell^{-2N}_q, \label{akq-HN} \\
&\|\partial_t {a}_{(k,q)}\|_{L^\infty_t H^N}\le  C_N\ell^{-6N}_q.\label{t-akq-HN}
\end{align}}
\end{proposition}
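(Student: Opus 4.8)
The plan is to estimate $a_{(k,q)}$ by viewing it through its defining formula \eqref{def-akq}, namely
$$a_{(k,q)} = \eta_q\, a_k\!\Big(\mathrm{Id} - \tfrac{\RR_{\ell_q}}{\chi_q\rho_q}\Big)(\chi_q\rho_q)^{1/2},$$
and controlling each factor in $H^N$ via the Moser-type product and composition estimates in $H^N\cap L^\infty$. First I would record that $\eta_q$ is a fixed smooth time-space cutoff with $|D^N\eta_q|\lesssim\lambda_q^N$, hence $\|\eta_q\|_{L^\infty_tH^N}\lesssim_N\lambda_q^{cN}$ for some fixed $c$, which is negligible compared with $\ell_q^{-2N}=\lambda_q^{100N}$. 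Next I would handle $(\chi_q\rho_q)^{1/2}$ directly by Proposition~\ref{cr1/2}, which already gives $\|(\chi_q\rho_q)^{1/2}\|_{L^\infty_tH^N}\le C_N(TK^3)^{-1/2}\delta_{q+1}^{-N+1/2}\lambda_q^{5N+4N\alpha}\ell_q^{-N}$; since $\delta_{q+1}^{-N}=\lambda_{q+1}^{2N\beta}$ is subpolynomial in $\lambda_q$ (as $\beta$ is tiny) and $\lambda_q^{5N+4N\alpha}\ell_q^{-N}=\lambda_q^{(105+4\alpha)N}$, this factor is $\lesssim_N\ell_q^{-2N}$ up to absorbing lower-order powers, using $a>a_0$ large.

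The genuinely delicate factor is the composition $a_k(\mathrm{Id}-\RR_{\ell_q}/(\chi_q\rho_q))$. Here I would invoke the standard estimate for $H^N\cap L^\infty$ norms of $F\circ h$ with $F=a_k$ smooth on the ball $B_\epsilon(\mathrm{Id})$ and $h=\mathrm{Id}-\RR_{\ell_q}/(\chi_q\rho_q)$: one gets
$$\|a_k(h)\|_{L^\infty_tH^N}\lesssim_N \big(1+\|h-\mathrm{Id}\|_{L^\infty_{t,x}}\big)^{N-1}\|h-\mathrm{Id}\|_{L^\infty_tH^N}+ \text{(lower order)},$$
so everything reduces to bounding $\RR_{\ell_q}/(\chi_q\rho_q)$ in $L^\infty_{t,x}$ and in $L^\infty_tH^N$. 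For the $L^\infty$ bound, $\chi_q\rho_q\gtrsim (TK^3)^{-1}\delta_{q+1}$ and $\chi_q\rho_q\gtrsim (TK^3)^{-1}|\RR_{\ell_q}|$ by the very construction of $\chi$ in \eqref{def-chi} (the linear growth branch $\chi(z)=z$ for $z\ge2$), so $\|\RR_{\ell_q}/(\chi_q\rho_q)\|_{L^\infty_{t,x}}\lesssim TK^3$ — a constant, which is precisely why the geometric lemma applies and why this quotient stays in the ball. For the $H^N$ bound I would write $\RR_{\ell_q}/(\chi_q\rho_q)=\RR_{\ell_q}\cdot(\chi_q\rho_q)^{-1}$ and apply the Moser product inequality together with \eqref{e:R_ell-W} (giving $\|\RR_{\ell_q}\|_{L^\infty_tW^{N,1}}\lesssim\lambda_q^5\ell_q^{-N}$, which upgrades to an $L^\infty_tH^N$ bound after pairing with the compactly supported cutoff and Sobolev embedding, at the cost of another fixed power of $\lambda_q$ and of $K$) and \eqref{est-cr-1} (giving $\|(\chi_q\rho_q)^{-1}\|_{L^\infty_tH^N}\lesssim_N TK^3\delta_{q+1}^{-N-1}\lambda_q^{5N+4N\alpha}\ell_q^{-N}$). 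Multiplying these and absorbing the subpolynomial-in-$\lambda_q$ factors $\delta_{q+1}^{-N-1}$, $T^{-1}$, $K$ into the gain from $\ell_q^{-1}=\lambda_q^{50}$ yields $\|\RR_{\ell_q}/(\chi_q\rho_q)\|_{L^\infty_tH^N}\lesssim_N \ell_q^{-2N}$, hence $\|a_{(k,q)}\|_{L^\infty_tH^N}\lesssim_N\ell_q^{-2N}$ after combining with the bounds on $\eta_q$ and $(\chi_q\rho_q)^{1/2}$. The bookkeeping that all stray powers of $\lambda_q,\delta_{q+1}^{-1},T^{-1},K$ are indeed dominated by the $\ell_q$-powers at hand — so that the exponent on the right is exactly $-2N$ and not something larger — is the main point to check carefully; it is exactly the kind of estimate where the parameter hierarchy $\beta\ll\alpha\ll\sigma_0\ll\epsilon_0$ and $\ell_q=\lambda_q^{-50}$ are used.

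For the time derivative \eqref{t-akq-HN} I would differentiate \eqref{def-akq} by the product rule into three terms: $\partial_t\eta_q\cdot a_k(h)(\chi_q\rho_q)^{1/2}$, $\eta_q\, a_k'(h)\,\partial_t h\,(\chi_q\rho_q)^{1/2}$, and $\eta_q\, a_k(h)\,\partial_t(\chi_q\rho_q)^{1/2}$. The first is controlled using $|\partial_t\eta_q|\lesssim\lambda_{q-1}\le\lambda_q$ from \eqref{def-zeta}; the third uses \eqref{t-chi-H4}; and the second, which is the worst, uses $\partial_t h=-\partial_t\big(\RR_{\ell_q}(\chi_q\rho_q)^{-1}\big)=-\partial_t\RR_{\ell_q}\cdot(\chi_q\rho_q)^{-1}-\RR_{\ell_q}\cdot\partial_t(\chi_q\rho_q)^{-1}$, estimated in $L^\infty_tH^N\cap L^\infty_{t,x}$ via \eqref{e:R_ell-W}, \eqref{est-cr-1}, \eqref{est-t-cr-1} and the Moser inequality, together with the $H^N\cap L^\infty$ bound on $a_k'(h)$ (same composition estimate, $a_k'$ also smooth on $B_\epsilon(\mathrm{Id})$). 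Collecting all powers, the dominant scaling comes from $\partial_t(\chi_q\rho_q)^{-1}$ and $\partial_t\RR_{\ell_q}$, each contributing an extra $\ell_q^{-1}$-type loss relative to the static estimate; tracking the worst of these gives the exponent $\ell_q^{-6N}$ claimed. Again the sole real obstacle is verifying that the accumulated exponent is at most $6N$ and not larger — a direct but somewhat lengthy arithmetic with the parameters of Section~\ref{para}.
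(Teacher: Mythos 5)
Your proposal follows essentially the same route as the paper's own proof: bound $\RR_{\ell_q}/(\chi_q\rho_q)$ in $L^\infty_{t,x}$ and $L^\infty_tH^N$ via \eqref{e:R_ell-W} and \eqref{est-cr-1}, feed this into the Moser-type composition estimate for $a_k(\mathrm{Id}-\RR_{\ell_q}/(\chi_q\rho_q))$, multiply by the bounds for $\eta_q$ and $(\chi_q\rho_q)^{\pm 1/2}$ from Proposition \ref{cr1/2} (and \eqref{t-chi-H4}, \eqref{est-t-cr-1} for the time derivative, with the same three-term product rule), and then absorb the stray powers of $T^{-1},K,\delta_{q+1}^{-1},\lambda_q$ into $\ell_q^{-2N}$ resp.\ $\ell_q^{-6N}$ using $\ell_q=\lambda_q^{-50}$ and \eqref{b-beta}, exactly as in \eqref{Con1}. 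The only small imprecision is your parenthetical that $\|\RR_{\ell_q}/(\chi_q\rho_q)\|_{L^\infty_{t,x}}\lesssim TK^3$ suffices for membership in $B_\epsilon(\mathrm{Id})$: the actual smallness comes from the normalization by $\delta_{q+1}\lambda_q^{-4\alpha}$ inside $\chi$, which yields the extra factor $\lambda_q^{-4\alpha}$; this does not affect the $H^N$ estimates and is treated at the same level of detail in the paper.
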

\begin{proof}{
Combining with \eqref{e:R_ell-W} and \eqref{est-cr-1}, we obtain that
\begin{align}
\Big\|\frac{\RR_{\ell_q}}{\chi_q \rho_q}\Big\|_{L^\infty_tH^N}\le& \|\RR_{\ell_q}\|_{L^\infty_{t,x}} \|(\chi_q\rho_q)^{-1}\|_{L^\infty_t H^N}+\|\RR_{\ell_q}\|_{L^\infty_t H^N} \|(\chi_q\rho_q)^{-1}\|_{L^\infty_{t,x}}\nonumber\\
\le&C_N TK^3\delta^{-N-1}_{q+1}\lambda^{5N+4N\alpha}_q\ell^{-N}_q+CTK^3\delta^{-1}_{q+1}\lambda^{5}_q\ell^{-N}_q\nonumber\\
\le&C_N TK^3\delta^{-N-1}_{q+1}\lambda^{5N+4N\alpha}_q\ell^{-N}_q.\label{R-cr-HN}
\end{align}
This inequality together with \eqref{e:R_ell-W} implies that
\begin{align}
 \Big{\|} a_{k}\Big({\rm Id}-\frac{\RR_{\ell_q}}{\chi_q\rho_q}\Big)\Big{\|}_{L^\infty_t H^N}\le &C_N\Big(1+\Big\|\frac{\RR_{\ell_q}}{\chi_q\rho_q}\Big\|_{L^\infty_{t,x}}\Big)^{N-1}\Big\|\frac{\RR_{\ell_q}}{\chi_q\rho_q}\Big\|_{L^\infty_tH^N}\nonumber\\
\le&C_N (TK^3)^{N}\delta^{-2N}_{q+1}\lambda^{10N+4N\alpha}_q\ell^{-N}_q.\label{ak-HN}
\end{align}
Since ${a}_{(k,q)}=\eta_q a_{k}\big({\rm Id}-\frac{\RR_{\ell_q}}{\chi_q\rho_q}\big)(\chi_q\rho_q)^{1/2}$, we have by \eqref{chi-H4}, \eqref{R-cr-HN} and \eqref{ak-HN} that
\begin{align*}
\|{a}_{(k,q)}\|_{L^\infty_t H^N}\le& \Big{\|} a_{k}\Big({\rm Id}-\frac{\RR_{\ell_q}}{\chi_q\rho_q}\Big)\Big{\|}_{L^\infty_t H^N}\|(\chi_q\rho_q)^{1/2}\|_{L^\infty_{t,x}}+C\|(\chi_q\rho_q)^{1/2}\|_{L^\infty_t H^N}\\
\le&C_N (TK^3)^{N}\delta^{-2N}_{q+1}\lambda^{10N+4N\alpha}_q\ell^{-N}_q+C_N(TK^3)^{-1/2}\delta^{-N+1/2}_{q+1}\lambda^{5N+4N\alpha}_q\ell^{-N}_q\\
\le&C_N (TK^3)^{N}\delta^{-2N}_{q+1}\lambda^{10N+4N\alpha}_q\ell^{-N}_q.
\end{align*}
Owning to
\begin{align}\label{Con1}
T<1, \,\,\alpha<\frac{1}{4}, \,\,K^3<\lambda_q, \,\,\delta^{-4}_{q+1}<\lambda^{\alpha}_q, \,\,\ell_q=\lambda^{-50}_q,
\end{align}
 we obtain \eqref{akq-HN} by the above estimate.}

By \eqref{e:R_ell-W}, \eqref{est-cr-1} and \eqref{est-t-cr-1}, we have that
\begin{align}
\Big\|\partial_t\Big(\frac{\RR_{\ell_q}}{\chi_q \rho_q}\Big)\Big\|_{L^\infty_tH^N}\le& \|\partial_t\RR_{\ell_q}\|_{L^\infty_t H^N} \|(\chi_q\rho_q)^{-1}\|_{L^\infty_t H^N}+\|\RR_{\ell_q}\|_{L^\infty_t H^N} \|\partial_t(\chi_q\rho_q)^{-1}\|_{L^\infty_t H^N}\nonumber\\
\le&C_N TK^3\delta^{-N-2}_{q+1}\lambda^{5N+10+4(N+1)\alpha}_q\ell^{-3N}_q\nonumber\\
&+C_NTK^3\delta^{-N-2}_{q+1}\lambda^{5N+10+4(N+1)\alpha}_q\ell^{-3N+1}_q\nonumber\\
\le&C_N TK^3\delta^{-N-2}_{q+1}\lambda^{5N+10+4(N+1)\alpha}_q\ell^{-3N}_q.\label{t-R-cr-HN}
\end{align}
Therefore, one infers from  \eqref{e:R_ell-W}, \eqref{R-cr-HN} and \eqref{t-R-cr-HN} that
\begin{align}
\Big\|\partial_ta_{k}\Big({\rm Id}-\frac{\RR_{\ell_q}}{\chi_q \rho_q}\Big)\Big\|_{L^\infty_tH^N}\le&\Big\|(a'_k)\Big({\rm Id}-\frac{\RR_{\ell_q}}{\chi_q \rho_q}\Big)\Big\|_{L^\infty_tH^N}\Big\|\partial_t\Big(\frac{\RR_{\ell_q}}{\chi_q\rho_q}\Big)\Big\|_{L^\infty_tH^N}\nonumber\\
\lesssim& \Big(1+\Big\|\Big(\frac{\RR_{\ell_q}}{\chi_q\rho_q}\Big)\Big\|_{L^\infty_{t,x}}\Big)^{N-1}\Big\|\Big(\frac{\RR_{\ell_q}}{\chi_q\rho_q}\Big)\Big\|_{L^\infty_tH^N}\Big\|\partial_t\Big(\frac{\RR_{\ell_q}}{\chi_q \rho_q}\Big)\Big\|_{L^\infty_tH^N}\nonumber\\
\le&
C_N (TK^3)^{N+1}\delta^{-3N-2}_{q+1}\lambda^{15N+10+4(2N+1)\alpha}_q\ell^{-4N}_q.
\label{t-ak-HN}
\end{align}
Collecting \eqref{chi-H4}, \eqref{t-chi-H4}, \eqref{ak-HN} and \eqref{t-ak-HN} together shows that
\begin{align*}
\|\partial_t {a}_{(k,q)}\|_{L^\infty_t H^N}\le& \|\partial_t\eta_q\|_{L^\infty_t H^N}\Big\| a_{k}\Big({\rm Id}-\frac{\RR_{\ell_q}}{\chi_q\rho_q}\Big)\Big\|_{L^\infty_t H^N}\|(\chi_q\rho_q)^{1/2}\|_{L^\infty_t H^N}\\
&+\|\eta_q\|_{L^\infty_t H^N}\Big\|\partial_t a_{k}\Big({\rm Id}-\frac{\RR_{\ell_q}}{\chi_q\rho_q}\Big)\Big\|_{L^\infty_t H^N}\|(\chi_q\rho_q)^{1/2}\|_{L^\infty_t H^N}\\
&+\|\eta_q\|_{L^\infty_t H^N}\Big\| a_{k}\Big({\rm Id}-\frac{\RR_{\ell_q}}{\chi_q\rho_q}\Big)\Big\|_{L^\infty_t H^N}\|\partial_t(\chi_q\rho_q)^{1/2}\|_{L^\infty_t H^N}\\
\le& C_NT^{-1/2}(TK^3)^{N+1}\delta^{-4N-3/2}_{q+1}\lambda^{21N+10+4(3N+1)\alpha}_q\ell^{-5N}_q,
\end{align*}
where we use the fact that $\|\partial_t\eta_q\|_{L^\infty_t H^N}\lesssim K^{3/2}T^{-1}\lambda^N_{q}$ and $\|\eta_q\|_{L^\infty_t H^N}\lesssim K^{3/2}T^{-1}\lambda^N_{q}$. Thanks to \eqref{Con1}, we prove \eqref{t-akq-HN}. Thus, we complete the proof of Proposition \ref{est-ak}.
\end{proof}

\begin{proposition}[Estimates for $F_{q+1}$]\label{est-Fq-B}Let $F_{q+1}$ be defined in \eqref{def-F_{q+1}}, then
    \begin{align}
        &\Big\|\int_{0}^T e^{(t-s)\Delta}F_{q+1}(s)\dd s\Big\|_{\widetilde L^\infty_tB^{1/2}_{2,1}\cap \widetilde  L^1_tB^{5/2}_{2,1}}
        \le  \lambda^{-20}_q, \label{heat-F}\\
        &{\|F_{q+1}\|_{ L^\infty_t H^2}\lesssim N_{\Lambda}K^{10}\ell^{-12}_q\lambda^{3+8\epsilon_0}_{q+1}. } \label{es-F-H1}
    \end{align}
\end{proposition}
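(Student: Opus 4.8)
The plan is to estimate each of the six constituents of $F_{q+1}$ in \eqref{def-F_{q+1}} separately and add up: the three ``oscillatory'' terms $F^{(1)}_{q+1},F^{(2)}_{q+1},F^{(3)}_{q+1}$ produced in Propositions \ref{F1} and \ref{def-F2}, the remainder term $(1-\zeta_q)\Div\RR^{rem}_q$, and the two gluing-error terms $\partial_t\zeta_q(u_q-u_{\ell_q})$ and $\zeta_q(1-\zeta_q)\Div((u_q-u_{\ell_q})\otimes(u_q-u_{\ell_q}))$. The ingredients are the amplitude bounds of Propositions \ref{cr1/2} and \ref{est-ak}, the Fourier-coefficient decay of Proposition \ref{est-cmr}, the intermittency estimates of Proposition \ref{guji1}, the mollification estimates of Proposition \ref{p:estimates-for-mollified} together with the elementary bound $\|u_q-u_{\ell_q}\|_{L^\infty_tH^N}\lesssim\ell_q\|u_q\|_{L^\infty_tH^{N+1}}$, the smoothing Lemma \ref{heat}, and the parameter inequalities \eqref{epsilon}--\eqref{b-beta} and \eqref{Con1} (crucially $b\ge 2^{20}/\alpha$, $\alpha<\sigma_0/100$, $\sigma_0\le\epsilon_0/20$, $\ell_q=\lambda_q^{-50}$). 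The structural point used repeatedly is that in $F^{(1)}_{q+1}$ and $F^{(2)}_{q+1}$ the fast oscillation ($\psi_{\lambda^{-\epsilon_0}_{q+1}}(\lambda^{1-\epsilon_0}_{q+1}N_\Lambda k\cdot x)$ resp. $e^{\ii 2\pi\lambda^{1-\epsilon_0}_{q+1}(l+m)N_\Lambda k\cdot x/K}$) is never differentiated in the \emph{definition} of these terms — a consequence of the shear geometry $k\perp\bar k$, which forces $\Div(\,\cdot\,\bar k\otimes k)$ and $k\cdot\nabla$ to land only on the slowly varying amplitudes $a_{(k,q)}$ (scale $\ell_q^{-1}$) and the temporal functions; and that the compact spatial support of $a_{(k,q)}$ (via $\eta_q$ in \eqref{def-akq}) makes all $L^2(\R^3)$- and Besov-norms below finite.

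For \eqref{es-F-H1}, the dominant term is $F^{(2)}_{q+1}$. Expanding the $H^2$-norm by Leibniz and using $\ell_q^{-1}\ll\lambda_{q+1}$, the worst case places both derivatives on the spatial exponential, producing $|\xi|^2\sim\lambda^{2(1-\epsilon_0)}_{q+1}(l+m)^2$ (with $\xi=2\pi\lambda^{1-\epsilon_0}_{q+1}(l+m)N_\Lambda k/K$); combined with the prefactor scaling $(N_\Lambda K^{-1}\lambda^{1-\epsilon_0}_{q+1}(l+m))^{-1}$, the coefficient bound $|c_{l,\epsilon_0}c_{m,\epsilon_0}|\lesssim K^{9}\lambda^{9\epsilon_0}_{q+1}(lm)^{-4}$ of Proposition \ref{est-cmr} (still summable in $(l,m)$ after multiplying by $|l+m|$), $\|g^2_{(2,\sigma_0,k)}\|_{L^\infty_t}\lesssim\lambda^{2}_{q+1}$ and $\|a^2_{(k,q)}\|_{L^\infty_tH^2}\lesssim\ell_q^{-8}$, this produces precisely $\lesssim N_\Lambda K^{8}\ell_q^{-8}\lambda^{3+8\epsilon_0}_{q+1}$. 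The term $F^{(1)}_{q+1}$ is treated identically, placing both $H^2$-derivatives on $\psi_{\lambda^{-\epsilon_0}_{q+1}}(\lambda^{1-\epsilon_0}_{q+1}N_\Lambda k\cdot x)$ and using the analogue of Proposition \ref{guji1}, $\|D^m\psi_{\lambda^{-\epsilon_0}_{q+1}}(\lambda^{1-\epsilon_0}_{q+1}N_\Lambda k\cdot x)\|_{L^\infty}\lesssim N_\Lambda^{m}K^{1/2}\lambda^{m+\epsilon_0/2}_{q+1}$, together with $\|\partial_ta_{(k,q)}\|_{L^\infty_tH^2}\lesssim\ell_q^{-12}$, $\|a_{(k,q)}\|_{L^\infty_tH^2}\lesssim\ell_q^{-4}$, $\|g_{(2,\sigma_0,k)}\|_{L^\infty_t}\lesssim\lambda_{q+1}$, $\|\tfrac{d}{dt}g_{(2,\sigma_0,k)}\|_{L^\infty_t}\lesssim\lambda^{\sigma_0+3}_{q+1}$; since $\sigma_0\le\epsilon_0/20$ this yields $\lesssim K^{1/2}\ell_q^{-12}\lambda^{3+8\epsilon_0}_{q+1}$. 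The remaining four terms are lower order: $\|F^{(3)}_{q+1}\|_{L^\infty_tH^2}\lesssim\ell_q^{-12}\lambda^{2}_{q+1}$, and by Proposition \ref{p:estimates-for-mollified} and $T<1$ the remainder and gluing terms are $O(\lambda^{10}_q)$; since $\lambda_q\ll\lambda_{q+1}$, all are absorbed, giving \eqref{es-F-H1}.

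For \eqref{heat-F}, Lemma \ref{heat} with $u_0=0$, $s=\tfrac12$, $r_2=1$ and $r_1\in\{1,\infty\}$ reduces the contribution of any Duhamel-type piece to its $\widetilde L^1_tB^{1/2}_{2,1}$-norm, so I split $F_{q+1}$ into four groups. Group (i): $F^{(2)}_{q+1}$ and the mean-free part of $F^{(1)}_{q+1}$, whose Fourier support lies at frequencies $\gtrsim\lambda^{1-\epsilon_0}_{q+1}$, so each Littlewood--Paley block at frequency $2^j$ gains a heat factor $2^{-2j}\lesssim\lambda^{-2(1-\epsilon_0)}_{q+1}$; combined with the explicit $\lambda^{-(1-\epsilon_0)}_{q+1}$ resp.\ $\lambda^{-2}_{q+1}$ prefactors, the $L^1_t$-smallness $\|g_{(2,\sigma_0,k)}\|_{L^1_t}\lesssim\lambda^{-1}_{q+1}$, $\|\tfrac{d}{dt}g_{(2,\sigma_0,k)}\|_{L^1_t}\lesssim\lambda^{\sigma_0+1}_{q+1}$, and the amplitude bounds, this gives a bound (polynomial in $\lambda_q$)$\times\lambda_{q+1}^{-c}$ with $c>0$, hence $\ll\lambda^{-20}_q$ because $\lambda_{q+1}=\lambda_q^{b}$ with $b$ huge. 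Group (ii): the spatially non-oscillatory mean part of $F^{(1)}_{q+1}$, coming from $\mathbb P_{=0}(\psi_{\lambda^{-\epsilon_0}_{q+1}}(\lambda^{1-\epsilon_0}_{q+1}N_\Lambda k\cdot x))\sim K^{-1/2}\lambda^{-\epsilon_0/2}_{q+1}$, whose $\widetilde L^1_tB^{1/2}_{2,1}$-norm is already $\ll\lambda^{-20}_q$ thanks to the $\lambda^{-2}_{q+1}$ prefactor and the $L^1_t$-bounds on $g_{(2,\sigma_0,k)}$ and its derivative. Group (iii): $F^{(3)}_{q+1}=\sum_k\Div(a^2_{(k,q)}\mathbb P_{\neq0}(g^2_{(2,\sigma_0,k)})\bar k\otimes\bar k)$, which is spatially non-oscillatory and receives no heat smoothing — here I integrate by parts once in time, writing $\mathbb P_{\neq0}(g^2_{(2,\sigma_0,k)})=\partial_tH_k$ with $H_k(0)=0$; since $g_{(2,\sigma_0,k)}$ is temporally periodic on scale $\sim\lambda^{-\sigma_0}_{q+1}$, one has $\|H_k\|_{L^\infty_t}\lesssim\lambda^{-\sigma_0}_{q+1}\log\lambda_{q+1}$, so the boundary term $\sum_k\Div(a^2_{(k,q)}H_k\bar k\otimes\bar k)$ (estimated directly in $\widetilde L^\infty_tB^{1/2}_{2,1}\cap\widetilde L^1_tB^{5/2}_{2,1}$ via the $H^2$- and $H^4$-bounds on $a^2_{(k,q)}$) and the two residual integrals (to which Lemma \ref{heat} applies, with forcing $\Delta$ of the boundary term and $\sum_k\Div(\partial_t(a^2_{(k,q)})H_k\bar k\otimes\bar k)$) are all $\lesssim$ (polynomial in $\lambda_q$)$\times\lambda^{-\sigma_0}_{q+1}\ll\lambda^{-20}_q$, since $b\sigma_0\ge 2^{20}\cdot100$. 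Group (iv): the remainder/gluing terms, for which Lemma \ref{heat} and Proposition \ref{p:estimates-for-mollified} give $O(\lambda^{10}_q\ell_q)$-type contributions, negligible.

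The main obstacle is the bookkeeping in \eqref{es-F-H1} and group (i): one must carefully exploit that the fast oscillation is undifferentiated (the shear structure) and then balance the powers of $\lambda_{q+1}$ coming from the amplitudes and temporal concentrations against the $\lambda^{-(1-\epsilon_0)}_{q+1}$/$\lambda^{-2}_{q+1}$ prefactors and, for \eqref{heat-F}, the $\lambda^{-2(1-\epsilon_0)}_{q+1}$ heat gain, so that the net exponent of $\lambda_{q+1}$ is negative — which the parameter ordering $\alpha<\sigma_0/100$, $\sigma_0\le\epsilon_0/20$, $\ell_q=\lambda_q^{-50}$ and $b\ge 2^{20}/\alpha$ is precisely designed to guarantee. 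A secondary subtlety is the time integration by parts for $F^{(3)}_{q+1}$, the only term carrying neither a spatial prefactor nor a spatial oscillation, where the required smallness can come only from the temporal concentration.
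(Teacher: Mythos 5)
Your proposal is correct and follows essentially the same route as the paper: term-by-term estimates of the six pieces of $F_{q+1}$, reduction of the Duhamel integral via Lemma \ref{heat}, exploitation of the $\lambda^{-1}_{q+1},\lambda^{-2}_{q+1}$ prefactors coming from the undifferentiated shear oscillation together with Propositions \ref{est-cmr}--\ref{guji1} and \ref{est-ak}, and, crucially, the time integration by parts for $F^{(3)}_{q+1}$ against the bounded antiderivative of $\mathbb{P}_{\neq 0}(g^2_{(2,\sigma_0,k)})$ (the paper's $h_{\sigma_0}$, with $\|h_{\sigma_0}\|_{L^\infty_t}\lesssim\lambda^{-\sigma_0}_{q+1}$, so your extra $\log$ factor is unnecessary but harmless). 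The only cosmetic difference is that you phrase the smallness of the $F^{(1)},F^{(2)}$ contributions via frequency-localized heat decay at scale $\lambda^{1-\epsilon_0}_{q+1}$, whereas the paper obtains the same net power count through the interpolation $\|F\|_{B^{1/2}_{2,1}}\lesssim\|F\|^{1/2}_{L^2}\|F\|^{1/2}_{H^1}$ applied to the $L^1_t$ norms.
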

\begin{proof}{We firstly prove \eqref{heat-F}. By Lemma \ref{heat}, we have for $T<1$ that
\begin{align*}
&\Big\|\int_{0}^T e^{(t-s)\Delta}F^{(1)}_{q+1}(s)\dd s\Big\|_{_{\widetilde L^\infty_tB^{1/2}_{2,1}\cap \widetilde  L^1_tB^{5/2}_{2,1}}}\lesssim\|F^{(1)}_{q+1}\|_{ L^1_tB^{1/2}_{2,1}}.\end{align*}
The fact that $\supp_x a_{(k,q)}\subseteq \Omega_{q+1}\subseteq \big[-\tfrac{K}{2}, \tfrac{K}{2}\big]^3$ implies that $F^{(1)}_{q+1}$ has spatial compact support. Therefore, we obtain by the definition of $\TTT^3$ that
\begin{align*}
 \|F^{(1)}_{q+1}\|_{ L^1_tB^{1/2}_{2,1}}\lesssim&\|F^{(1)}_{q+1}\|^{1/2}_{ L^1_tL^2}  \|F^{(1)}_{q+1}\|^{1/2}_{ L^1_tH^1} \\
\lesssim&\lambda^{-2}_{q+1}\|\psi_{\lambda^{-\epsilon_0}_{q+1}} (\lambda^{1-\epsilon_0}_{q+1}N_{\Lambda}k\cdot x)\|^{1/2}_{L^2(\TTT^3)} \|\psi_{\lambda^{-\epsilon_0}_{q+1}} (\lambda^{1-\epsilon_0}_{q+1}N_{\Lambda}k\cdot x)\|^{1/2}_{H^1(\TTT^3)}\\
&\times\| {\partial_t({a}_{(k,q)} g_{(2, \sigma_0, k)}(t))}\|_{L^1_tW^{3,\infty}}.
\end{align*}
By Proposition \ref{guji1} and Proposition \ref{est-ak}, we obtain that
\begin{align*}
&\| {\partial_t({a}_{(k,q)} g_{(2, \sigma_0, k)}(t))}\|_{L^1_t W^{3,\infty}}\\
\lesssim&\|{a}_{(k,q)}\|_{L^\infty_t W^{3,\infty}}\|\partial_t g_{(2, \sigma_0, k)}(t)\|_{L^1_t}+\|\partial_t{a}_{(k,q)}\|_{L^\infty_t  W^{3,\infty}}\| g_{(2, \sigma_0, k)}(t)\|_{L^1_t}\\
\lesssim& \ell^{-12}_q\lambda^{1+\sigma_0}_{q+1},
\end{align*}
\begin{align*}
&\|\psi_{\lambda^{-\epsilon_0}_{q+1}} (\lambda^{1-\epsilon_0}_{q+1}N_{\Lambda}k\cdot x)\|^{1/2}_{L^2(\TTT^3)}\lesssim N^{1/2}_{\Lambda}K^{3/4},
\end{align*}
 and
\begin{align*}
\|\psi_{\lambda^{-\epsilon_0}_{q+1}} (\lambda^{1-\epsilon_0}_{q+1}N_{\Lambda}k\cdot x)\|^{1/2}_{H^1(\TTT^3)}\lesssim N^{1/2}_{\Lambda}K^{3/4}\lambda^{1/2}_{q+1}.
\end{align*}
Hence, one gets
\begin{align}
&\Big\|\int_{0}^T e^{(t-s)\Delta}F^{(1)}_{q+1}(s)\dd s\Big\|_{_{_{\widetilde L^\infty_tB^{1/2}_{2,1}\cap \widetilde  L^1_tB^{5/2}_{2,1}}}}\lesssim N_{\Lambda}K^{3/2}\ell^{-12}_q\lambda^{\sigma_0-1/2}_{q+1}.\label{es:F1}
\end{align}
By the definition of $F^{(2)}_{q+1}$ in Proposition \ref{def-F2}, we have
\begin{align*}
\|F^{(2)}_{q+1}\|_{L^1_tB^{1/2}_{2,1}}\lesssim& \|F^{(2)}_{q+1}\|^{1/2}_{L^1_tL^2}\|F^{(2)}_{q+1}\|^{1/2}_{L^1_tH^1}\\
\lesssim& \sum_{k\in\Lambda}\sum_{m,l\in \ZZ\backslash\{0\}, m+l\neq 0} K\Big\|{a}^2_{(k,q)}g^2_{(2, \sigma_0, k)} \|_{L^1_tW^{3,\infty}}\\
&\times\Big\|
\frac{c_{l, \epsilon_0}c_{m, \epsilon_0}e^{ \ii2\pi \lambda^{1-\epsilon_0}_{q+1} (l+m)N_{\Lambda} k\cdot x/K }}{2\pi N_{\Lambda}\lambda^{1-\epsilon_0}_{q+1}(l+m)} \Big\|^{1/2}_{L^2(\TTT^3)}\Big\|
\frac{c_{l, \epsilon_0}c_{m, \epsilon_0}e^{ \ii2\pi \lambda^{1-\epsilon_0}_{q+1} (l+m)N_{\Lambda} k\cdot x/K }}{2\pi N_{\Lambda}\lambda^{1-\epsilon_0}_{q+1}(l+m)} \Big\|^{1/2}_{H^1(\TTT^3)}.
\end{align*}
Note that
\begin{align*}
\|e^{ \ii2\pi \lambda^{1-\epsilon_0}_{q+1} (l+m)N_{\Lambda} k\cdot x/K }\|_{H^1(\TTT^3)}\lesssim K^{1/2}N_{\Lambda}\lambda^{1-\epsilon_0}_{q+1}|l+m|,
\end{align*}
together with Lemma \ref{heat}, Proposition \ref{est-cmr}-- \ref{guji1} and Proposition \ref{est-ak}, we have
\begin{align}
&\Big\|\int_{0}^T e^{(t-s)\Delta}F^{(2)}_{q+1}(s)\dd s\Big\|_{\widetilde L^\infty_tB^{1/2}_{2,1}\cap \widetilde  L^1_tB^{5/2}_{2,1}}\lesssim\|F^{(2)}_{q+1}\|_{L^1_tB^{1/2}_{2,1}}\nonumber\\
\lesssim&\sum_{k\in\Lambda}\sum_{m,l\in \ZZ\backslash\{0\}, m+l\neq 0} K^{2}|c_{l, \epsilon_0}||c_{m, \epsilon_0}||l+m|^{-1/2} \|{a}^2_{(k,q)} \|_{L^\infty_tW^{3,\infty}}\lambda^{-(1-\epsilon_0)/2}_{q+1}\nonumber\\
\lesssim&K^{11}\lambda^{5+4\alpha}_{q}\ell^{-10}_q\lambda^{-1/2+{10\epsilon_0}}_{q+1}.\label{es:F2}
\end{align}
Now we estimate $\int_0^t e^{(t-s)\Delta} F^{(3)}_{q+1}(s)\dd s$. Firstly,  we denote
\begin{align}\label{def-h-sig}
h_{\sigma_0}(t)=\lambda^{-\sigma_0}_{q+1}\int_0^{\lambda^{\sigma_0}_{q+1}t}\mathbb{P}_{\neq0}(g^2_{(2, \sigma_0, k)}(s))\dd s.
\end{align}
Then we have
\begin{align*}
\int_0^t e^{(t-s)\Delta}F^{(3)}_{q+1}(s)\dd s=&\int_0^t e^{(t-s)\Delta}\Div(a^2_{(k,q)}\bar{k}\otimes \bar{k})\dd h_{\sigma_0}(s)\\
=&\Div(a^2_{(k,q)}(t)\bar{k}\otimes \bar{k})h_{\sigma_0}(t)-\int_0^th_{\sigma_0}(s)\partial_s(e^{(t-s)\Delta}\Div(a^2_{(k,q)}\bar{k}\otimes\bar{k}))\dd s.
\end{align*}
Since
\begin{align*}
&\int_0^th_{\sigma_0}(s)\partial_s(e^{(t-s)\Delta}\Div(a^2_{(k,q)}\bar{k}\otimes\bar{k}))\dd s\\
=&\int_0^th_{\sigma_0}(s)e^{(t-s)\Delta}\partial_s(\Div(a^2_{(k,q)}\bar{k}\otimes\bar{k}))\dd s+\int_0^th_{\sigma_0}(s)\partial_s(e^{(t-s)\Delta})\Div(a^2_{(k,q)}\bar{k}\otimes\bar{k}))\dd s\\
=&\int_0^th_{\sigma_0}(s)e^{(t-s)\Delta}\partial_s(\Div(a^2_{(k,q)}\bar{k}\otimes\bar{k}))\dd s-\int_0^th_{\sigma_0}(s)\Delta(e^{(t-s)\Delta}\Div(a^2_{(k,q)}\bar{k}\otimes\bar{k}))\dd s,
\end{align*}
we obtain by Proposition \ref{est-ak} that
\begin{align}
\Big\|\int_0^t e^{(t-s)\Delta}F^{(3)}_{q+1}(s)\dd s\Big\|_{\widetilde L^\infty_t B^{1/2}_{2,1}\cap \widetilde L^1_t B^{5/2}_{2,1}}
\lesssim& (\|a^2_{(k,q)}\|_{L^\infty_t  H^4}+\|\partial_t(a^2_{(k,q)})\|_{L^\infty_t H^2})\| h_{\sigma_0}(t)\|_{L^\infty_t}\nonumber\\
\lesssim& \lambda^{5+4\alpha}_{q}\ell^{-16}_q\lambda^{-\sigma_0}_{q+1}.\label{es:F3}
\end{align}
Moreover, by   \eqref{e:vq-H3} and \eqref{e:R_rem}, we have
\begin{align*}
\|(1-\zeta_q)\Div \RR^{rem}_q\|_{\LoB}\lesssim\| \RR^{rem}_q\|_{L^\infty_tH^2}\lesssim\lambda^{10}_q\ell_q\lesssim\lambda^{-40}_q.
\end{align*}
and
\begin{align*}
    &\|\partial_t\zeta_q(u_q-u_{\ell_q})+\zeta_q(1-\zeta_q)\Div ((u_q-u_{\ell_q})\otimes (u_q-u_{\ell_q}))\|_{\LoB}\\ \lesssim&\ell_q\|u_q\|_{L^\infty_TH^2}+\ell_q\|u_q\|_{L^\infty_tH^3}(\|u_q\|_{L^\infty_tH^2}+\|u_{\ell_q}\|_{L^\infty_tH^2})\\
  \lesssim&\lambda^{10}_q\ell_q\lesssim\lambda^{-40}_q.
\end{align*}
Collecting the above two estimates with \eqref{es:F1}--\eqref{es:F3} together shows that
\begin{align*}
&\Big\|\int_{0}^T e^{(t-s)\Delta}F_{q+1}(s)\dd s\Big\|_{\widetilde L^\infty_tB^{1/2
}_{2,1}\cap \LoBt}\\
\lesssim&N_{\Lambda}K^{3/2}\ell^{-12}_q\lambda^{-1/2+\sigma_0}_{q+1} +K^{11}\lambda^{5+4\alpha}_{q}\ell^{-10}_q\lambda^{-1/2+{10\epsilon_0}}_{q+1}+\lambda^{5+4\alpha}_{q}\ell^{-16}_q\lambda^{-\sigma_0}_{q+1}+\lambda^{-40}_q\\
\lesssim&K^{11}N_{\Lambda}\lambda^{-40}_q\le \lambda^{-20}_q.
\end{align*}
Thereby, we obtain \eqref{heat-F} for large enough a.

Now we turn to estimate $\|F_{q+1}\|_{ L^\infty_t H^2}$. Firstly, we estimate $\|F^{(1)}_{q+1}\|_{ L^\infty_t H^2}$, $\|F^{(2)}_{q+1}\|_{ L^\infty_t H^2}$ and $\|F^{(3)}_{q+1}\|_{ L^\infty_t H^2}$ respectively.}

For $F^{(1)}_{q+1}$ defined in Proposition \ref{F1}, using Proposition \ref{guji1} and Proposition \ref{est-ak}, we have
   \begin{align*}
    \|F^{(1)}_{q+1}\|_{L^\infty_t H^2}\lesssim & \lambda^{-2}_{q+1}\|\psi_{\lambda^{-\epsilon_0}_{q+1}} (\lambda^{1-\epsilon_0}_{q+1}N_{\Lambda}k\cdot x)\|_{H^2(\TTT^3)}\|\partial_t(a_{(k,q)}  g_{(2, \sigma_0, k)}(t))\|_{L^\infty_tW^{4,\infty}}\\
\lesssim&K^{3/2}N^2_{\Lambda}\ell^{-12}_q\lambda^{3+\sigma_0}_{q+1}.
   \end{align*}
 Owing to \eqref{cm-2}, Proposition \ref{guji1} and Proposition \ref{est-ak}, we  bound $F^{(2)}_{q+1}$ by
   \begin{align*}
 &\|F^{(2)}_{q+1}\|_{L^\infty_t H^2}\\
\lesssim &K\sum_{k\in\Lambda}\sum_{m,l \in\ZZ\backslash\{0\}, m+l\neq 0} \frac{|c_{l,\epsilon_0}||c_{m,\epsilon_0}|}{\lambda^{1-\epsilon_0}_{q+1}|l+m|}\|e^{\ii2\pi \lambda^{1-\epsilon_0}_{q+1}(l+m)N_{\Lambda }k\cdot x/K}\|_{W^{2,\infty}(\TTT^3)}\|a^2_{(k,q)} g^2_{(2, \sigma_0,k)}\|_{L^\infty_tH^4}\\
\lesssim&N_{\Lambda}K^{10}\sum_{m,l \in\ZZ\backslash\{0\}, m+l\neq 0} \lambda^{1+8\epsilon_0}_{q+1}|l+m|^{-3}\|a^2_{(k,q)} \|_{L^\infty_t H^4}\|g^2_{(2, \sigma_0,k)}\|_{L^\infty_t}\\
\lesssim&N_{\Lambda}K^{10}\ell^{-8}_q\lambda^{3+8\epsilon_0}_{q+1}.
 \end{align*}
 For  $F^{(3)}_{q+1}$,   by Proposition \ref{guji1} and Proposition \ref{est-ak}, we have
  \begin{align*}
\|F^{(3)}_{q+1}\|_{L^\infty_t H^2}\le& \|a^2_{(k,q)}\|_{L^\infty_t H^2}\|\mathbb{P}_{\neq0}(g^2_{(2, \sigma_0, k)}) \|_{L^\infty_t}
\lesssim\lambda^{5+4\alpha}_{q}\ell^{-4}_q\lambda^{2}_{q+1}.
    \end{align*}
Using \eqref{e:vq-H3} and \eqref{e:R_rem}, one deduces that
 \begin{align*}
    \|\Div \RR^{rem}_q\|_{L^\infty_t H^2}\lesssim\lambda^{10}_{q}.
 \end{align*}
 and
 \begin{align*}
    &\|\partial_t\zeta_q(u_q-u_{\ell_q})+\zeta_q(1-\zeta_q)\Div ((u_q-u_{\ell_q})\otimes (u_q-u_{\ell_q}))\|_{{ L^\infty_t H^2}}\\
\lesssim &\ell_q\|u_q\|_{L^\infty_tH^3}+(\|u_q\|_{L^\infty_tH^3}+\|u_{\ell_q}\|_{L^\infty_tH^3})^2
    \lesssim\lambda^{10}_q.
\end{align*}
Therefore, we have
 \begin{align*}
 \|F_{q+1}\|_{ L^\infty_tH^2}\lesssim& K^{3/2}N^2_{\Lambda}\ell^{-12}_q\lambda^{3+\sigma_0}_{q+1}+N_{\Lambda}K^{10}\ell^{-8}_q\lambda^{3+8\epsilon_0}_{q+1}+\lambda^{5+4\alpha}_{q}\ell^{-4}_q\lambda^{2}_{q+1}+C\lambda^{10}_q\\
\lesssim&N_{\Lambda}K^{10}\ell^{-12}_q\lambda^{3+8\epsilon_0}_{q+1}.
 \end{align*}
 We obtain \eqref{es-F-H1}, so that we finish the proof of Proposition \ref{est-Fq-B}.
\end{proof}
   \begin{proposition}[Estimates for $\wtq$]\label{wtq-H3}Let $\wtq$ be the solution of the equations \eqref{e:wt}, then we have
\begin{align}
&\|\wtq\|_{\widetilde L^{\infty}_tB^{1/2}_{2,1}\cap \widetilde L^{1}_tB^{5/2}_{2,1} }\le \frac{1}{2}\delta_{q+2}\lambda^{-6\alpha}_{q+1},\label{estimate-wt}\\
&{\|\wtq\|_{\widetilde L^{\infty}_tB^3_{2,2}}\lesssim N_{\Lambda}K^{10}\ell^{-12}_q\lambda^{3+8\epsilon_0}_{q+1}} .
\end{align}
   \end{proposition}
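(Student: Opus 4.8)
The plan is to work from the Duhamel form of \eqref{e:wt}: a contraction argument in the scaling‑critical space $X:=\widetilde L^{\infty}_tB^{1/2}_{2,1}(\R^3)\cap\widetilde L^{1}_tB^{5/2}_{2,1}(\R^3)$ both produces $\wtq$ and yields \eqref{estimate-wt}, after which a bootstrap gives the $\widetilde L^{\infty}_tB^3_{2,2}$ bound. Apply the Leray projection $\mathbb P$ to \eqref{e:wt} — it kills $\nabla p_t$, fixes the divergence‑free field $\wtq$, commutes with the heat semigroup, and is bounded on every inhomogeneous Besov space $B^s_{2,\rho}(\R^3)$ — to obtain
\begin{equation*}
\wtq(t)=\int_0^t e^{(t-s)\Delta}\mathbb PF_{q+1}(s)\,\dd s-\int_0^t e^{(t-s)\Delta}\mathbb P\,\Div\big(\wtq\otimes\wtq+\wtq\otimes\ubqnl+\ubqnl\otimes\wtq\big)(s)\,\dd s=:\mathcal T(\wtq),
\end{equation*}
and any divergence‑free solution of this equation solves \eqref{e:wt} with a suitable $p_t$. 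The two analytic inputs are Lemma~\ref{heat} (heat smoothing trades $\widetilde L^1_t$‑integrability in time for two spatial derivatives) and the bilinear estimate $\big\|\int_0^{\cdot}e^{(\cdot-s)\Delta}\mathbb P\Div(f\otimes g)\,\dd s\big\|_X\le C_{\star}\|f\|_X\|g\|_X$, obtained by combining Lemma~\ref{heat} (applied to the order‑one operator $\mathbb P\Div$) with the interpolation embedding $X\hookrightarrow\widetilde L^{2}_tB^{3/2}_{2,1}$ and the algebra property of $B^{3/2}_{2,1}(\R^3)$ (cf.\ \cite{BCD11}).

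With these, for $w$ in the closed ball $\bar B_R\subset X$ one gets $\|\mathcal T(w)\|_X\le\kappa+C_{\star}R^2+2C_{\star}\|\ubqnl\|_XR$ and $\|\mathcal T(w_1)-\mathcal T(w_2)\|_X\le2C_{\star}(R+\|\ubqnl\|_X)\|w_1-w_2\|_X$, where $\kappa:=\big\|\int_0^{\cdot}e^{(\cdot-s)\Delta}\mathbb PF_{q+1}\,\dd s\big\|_X$. By \eqref{e:ubqnl} and the fast convergence of $\sum_{k\ge2}\delta_{k+1}\lambda_k^{-6\alpha}$, one has $\|\ubqnl\|_X\le M^{-1}+\sum_{k=2}^q\delta_{k+1}\lambda_k^{-6\alpha}\le 2M^{-1}$ for $a$ large. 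I would then fix $M$ so large that $4C_\star M^{-1}<\tfrac14$, set $R:=\tfrac12\delta_{q+2}\lambda_{q+1}^{-6\alpha}$ (so that also $C_\star R<\tfrac18$), and choose $a$ large enough that Proposition~\ref{est-Fq-B}, via \eqref{heat-F}, forces $\kappa\le\tfrac14\delta_{q+2}\lambda_{q+1}^{-6\alpha}$. Then $\mathcal T:\bar B_R\to\bar B_R$ is a contraction; its unique fixed point is $\wtq$, which therefore satisfies \eqref{estimate-wt}.

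For the $\widetilde L^{\infty}_tB^3_{2,2}$ bound I would bootstrap off this smallness. From $X\hookrightarrow\widetilde L^\infty_tB^{3/2}_{2,1}\hookrightarrow L^\infty_{t,x}$ we have $\|\wtq\|_{L^\infty_{t,x}}\lesssim\|\wtq\|_X$ small, and from \eqref{e:ubqnl} that $\|\ubqnl\|_{\widetilde L^\infty_tB^3_{2,2}}\le\lambda_q^5$ while $\|\ubqnl\|_{\widetilde L^\infty_tB^{-1}_{\infty,\infty}}\lesssim\|\ubqnl\|_{\widetilde L^\infty_tB^{1/2}_{2,1}}$ is small. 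Viewing $\partial_t\wtq-\Delta\wtq=\mathbb PF_{q+1}-\mathbb P\Div(\wtq\otimes\wtq+\wtq\otimes\ubqnl+\ubqnl\otimes\wtq)$, $\wtq(0)=0$, and applying Lemma~\ref{heat} with $r_1=r_2=\infty$, $s=3$ (so the source is measured in $\widetilde L^\infty_tB^1_{2,2}$, i.e.\ the products in $\widetilde L^\infty_tB^2_{2,2}$): the quadratic term is absorbed by the Kato–Ponce inequality, $\|\wtq\otimes\wtq\|_{B^2_{2,2}}\lesssim\|\wtq\|_{L^\infty}\|\wtq\|_{B^2_{2,2}}$; for the coupling term I would use the paraproduct decomposition of $\ubqnl\otimes\wtq$, whose low–high piece obeys $\|T_{\ubqnl}\wtq\|_{B^2_{2,2}}\lesssim\|\ubqnl\|_{B^{-1}_{\infty,\infty}}\|\wtq\|_{B^3_{2,2}}$ (a small multiple of $\|\wtq\|_{\widetilde L^\infty_tB^3_{2,2}}$) and whose high–low and high–high pieces are bounded by $\lesssim\|\wtq\|_X\|\ubqnl\|_{B^3_{2,2}}\le\lambda_q^5\|\wtq\|_X$, which is far below $N_\Lambda K^{10}\ell_q^{-12}\lambda_{q+1}^{3+8\epsilon_0}$ since $\|\wtq\|_X\le\tfrac12\delta_{q+2}\lambda_{q+1}^{-6\alpha}$. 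Absorbing the small‑multiple terms into the left side and using $\|F_{q+1}\|_{L^\infty_tH^1}\le\|F_{q+1}\|_{L^\infty_tH^2}\lesssim N_\Lambda K^{10}\ell_q^{-12}\lambda_{q+1}^{3+8\epsilon_0}$ from \eqref{es-F-H1} yields $\|\wtq\|_{\widetilde L^\infty_tB^3_{2,2}}\lesssim N_\Lambda K^{10}\ell_q^{-12}\lambda_{q+1}^{3+8\epsilon_0}$; to legitimize the absorption one runs the same estimate along the Picard iterates $\wtq^{(n+1)}=\mathcal T(\wtq^{(n)})$, propagating the bound by induction.

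The hard part is this last, sub‑critical, estimate. The coupling term $\Div(\ubqnl\otimes\wtq+\wtq\otimes\ubqnl)$ mixes $\ubqnl$, which is small only in the critical norm $X$ but whose $H^2$‑ and $B^3_{2,2}$‑norms are as large as $\lambda_q^5$, with the field $\wtq$ whose $B^3_{2,2}$‑norm is being estimated; a naive $H^3$ energy estimate would require $C(1+T)\lambda_q^5<1$ and fails. The paraproduct splitting is exactly what resolves this: the only term with the top derivatives on $\wtq$ is the low–high paraproduct, where $\ubqnl$ enters solely through its small negative‑regularity norm, and every other occurrence of the large factor $\lambda_q^5$ is multiplied by the tiny $\|\wtq\|_X$. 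A secondary point, already built into Proposition~\ref{est-Fq-B}, is that the heat‑regularized forcing $\kappa$ must be small enough in $X$ to leave room for the quadratic and linear terms, which rests on the oscillatory gains $\lambda_{q+1}^{-1/2+10\epsilon_0}$ and $\lambda_{q+1}^{-\sigma_0}$ in the proof of \eqref{heat-F} together with the mollification estimate $\|\RR^{rem}_q\|_{L^\infty_tH^2}\lesssim\lambda_q^{-40}$.
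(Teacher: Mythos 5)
Your proposal is correct and follows essentially the same route as the paper: Duhamel's formula plus a Banach fixed-point/contraction argument in $\widetilde L^\infty_t B^{1/2}_{2,1}\cap\widetilde L^1_tB^{5/2}_{2,1}$ using \eqref{heat-F} and the smallness $CM^{-1}\le\tfrac12$ of the coupling with $\ubqnl$, followed by a sub-critical bootstrap for the $\widetilde L^\infty_tB^3_{2,2}$ norm driven by \eqref{es-F-H1}. The only (harmless) deviation is in the bootstrap's coupling term, where you absorb the low--high paraproduct via the smallness of $\|\ubqnl\|_{\widetilde L^\infty_tB^{-1}_{\infty,\infty}}$, whereas the paper bounds that contribution by $\|\ubqnl\|_{L^\infty_tL^\infty}\|\wtq\|_{\widetilde L^\infty_tB^2_{2,2}}$ and absorbs it using interpolation between $B^{1/2}_{2,1}$ and $B^3_{2,2}$, Young's inequality, and the smallness of $\|\wtq\|_{\widetilde L^\infty_tB^{1/2}_{2,1}}$ from \eqref{estimate-wt}.
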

   \begin{proof}
       With the aid of Duhamel formula, we write the equations \eqref{e:wt} in the integral form
\begin{align}
\wtq(x,t)=&\int_0^t e^{(t-s)\Delta}\mathcal{P}\Div (\wtq\otimes \wtq+\uqnl\otimes \wtq+ \wtq\otimes\uqnl)(s) \dd s\nonumber\\
&-\int_0^t e^{(t-s)\Delta}\mathcal{P} F_{q+1}(s)\dd s,\label{inte-wtq}
\end{align}
where $\mathcal{P}$ is the Leray projector onto divergence-free victor fields.

Thanks to \eqref{e:vqnl-H3}, we have
\begin{align*}
&\Big\|\int_0^t e^{(t-s)\Delta}\mathcal{P}\Div ( \uqnl\otimes \wtq+ \wtq\otimes\uqnl )\dd s\Big\|_{\widetilde L^{\infty}_tB^{1/2}_{2,1}\cap \widetilde L^{1}_tB^{5/2}_{2,1} }\\
\lesssim&(\|\uqnl\otimes \wtq\|_{\widetilde L^{1}_tB^{3/2}_{2,1}}+\|\wtq\otimes \uqnl\|_{\widetilde L^{1}_tB^{3/2}_{2,1}})\\
\lesssim&\|\uqnl\|_{\widetilde{L}^{\infty}_tB^{1/2}_{2,1}\cap \widetilde{L}^{1}_tB^{5/2}_{2,1} }\|\wtq\|_{\widetilde L^{\infty}_tB^{1/2}_{2,1}\cap \widetilde L^{1}_tB^{5/2}_{2,1} }\\
\le&CM^{-1}\|\wtq\|_{\widetilde L^{\infty}_tB^{1/2}_{2,1}\cap \widetilde L^{1}_tB^{5/2}_{2,1} }.
\end{align*}
By Proposition \ref{est-Fq-B}, one gets
\begin{align*}
&\Big\|\int_0^t e^{(t-s)\Delta}\mathcal{P} F_{q+1}(s)\dd s\Big\|_{\widetilde L^{\infty}_tB^{1/2}_{2,1}\cap \widetilde  L^{1}_tB^{5/2}_{2,1} }\\
\lesssim&\Big\|\int_0^t e^{(t-s)\Delta} F_{q+1}(s)\dd s\Big\|_{\widetilde L^{\infty}_tB^{1/2}_{2,1}\cap \widetilde L^{1}_tB^{5/2}_{2,1} }
\lesssim\lambda^{-20}_{q}\le \delta_{q+2}\lambda^{-8\alpha}_{q+1}.
\end{align*}
Taking $M$ large enough such that $CM^{-1}\le \frac{1}{2}$, for large enough $a$, we collect the above two estimates together to obtain
\begin{align*}
\|\wtq\|_{\widetilde L^{\infty}_tB^{1/2}_{2,1}\cap \widetilde      L^{1}_tB^{5/2}_{2,1} }\lesssim\|\wtq\|^2_{ \widetilde L^{\infty}_tB^{1/2}_{2,1}\cap \widetilde L^{1}_tB^{5/2}_{2,1} }+\lambda^{-20}_{q}\le \delta_{q+2}\lambda^{-8\alpha}_{q+1}.
\end{align*}
By  the Banach fixed point theorem and the the continuity method, this estimate implies that as long as $a$ is large enough, the equation \eqref{inte-wtq} admits a unique  mild solution $\wtq$ on $[0,T]$ with
\begin{align}
 \|\wtq\|_{ \widetilde L^{\infty}_tB^{1/2}_{2,1}\cap \widetilde L^{1}_tB^{5/2}_{2,1} }\lesssim\delta_{q+2}\lambda^{-8\alpha}_{q+1}\le \frac{1}{2} \delta_{q+2}\lambda^{-6\alpha}_{q+1}.\label{es：wtq}
\end{align}
Note that $F_{q+1}(t)=0$ for $t\in [0, \tfrac{T}{4}+2\lambda^{-1}_{q-1}]$ and $\wtq|_{t=0}=0$, the above inequality implies that
\begin{align}\label{supp-wtq}
    \wtq(t)=0, \quad \forall \,\,0\le t\le \tfrac{T}{4}+2\lambda^{-1}_{q-1}.
\end{align}
Moreover, by \eqref{e:vqnl-H3}, \eqref{es-F-H1} and  \eqref{es：wtq}, note that $L^\infty_tH^2\hookrightarrow \widetilde L^{\infty}_t B^1_{2,2}$, we have
\begin{align*}
 \|\wtq\|_{ \widetilde L^{\infty}_t B^3_{2,2} }
\lesssim&\|\wtq\|_{\widetilde L^{\infty}_tB^{-1}_{\infty,\infty}}\|\wtq\|_{\widetilde L^{\infty}_t B^3_{2,2}}+\|\uqnl\|_{L^\infty_t L^\infty}\|\wtq\|_{\widetilde L^{\infty}_t B^2_{2,2}}\\
 &+\|\uqnl\|_{\widetilde L^{\infty}_t B^2_{2,2}}\|\wtq\|_{L^\infty_t L^\infty}+\|F_{q+1}\|_{\widetilde L^{\infty}_t B^1_{2,2}} \\
 \lesssim&\|\wtq\|_{\widetilde L^{\infty}_tB^{1/2}_{2,1}}\|\wtq\|_{\widetilde L^{\infty}_t B^3_{2,2}}+\|\uqnl\|_{L^\infty_t L^\infty}\|\wtq\|^{3/5}_{\widetilde L^{\infty}_t B^3_{2,2}}\|\wtq\|^{2/5}_{\widetilde L^{\infty}_t B^{1/2}_{2,1}}\\
& +\|\uqnl\|_{\widetilde L^{\infty}_t B^2_{2,2}}\|\wtq\|^{2/5}_{\widetilde L^{\infty}_t B^3_{2,2}}\|\wtq\|^{3/5}_{\widetilde L^{\infty}_t B^{1/2}_{2,1}}+\|F_{q+1}\|_{\widetilde L^{\infty}_t B^1_{2,2}}\\
\le&C(\delta_{q+2}\lambda^{-6\alpha}_{q+1}\|\wtq\|_{\widetilde L^{\infty}_t B^3_{2,2}}+\delta_{q+2}\lambda^{-6\alpha}_{q+1}\lambda^{20}_q+N_{\Lambda}K^{10}\ell^{-12}_q\lambda^{3+8\epsilon_0}_{q+1})+\frac{1}{2}\|\wtq\|_{ \widetilde L^{\infty}_t B^3_{2,2} }.
\end{align*}
This inequality shows that
\begin{align*}
 \|\wtq\|_{ L^{\infty}_t H^3 } \le \|\wtq\|_{\widetilde L^{\infty}_t B^3_{2,2} }\lesssim N_{\Lambda}K^{10}\ell^{-12}_q\lambda^{3+8\epsilon_0}_{q+1}.
\end{align*}
   \end{proof}
   \begin{proposition}[Estimates for $w_{q+1}$]\label{estimate-wq+1}For $1\le p<2$, there exists a  constant {$C_0$} such that
\begin{align}
&\|w^{(p)}_{q+1}\|_{L^2_{t,x}\cap L^pL^{\infty}}+{\lambda^{-9/2}_{q+1}}\|w^{(p)}_{q+1}\|_{L^{\infty}H^3}\le \frac{1}{4}C_0\delta^{1/2}_{q+1},\label{estimate-wp}\\
&\|w^{(c)}_{q+1}\|_{L^2_{t,x}\cap L^pL^{\infty}}+{\lambda^{-4}_{q+1}}\|w^{(c)}_{q+1}\|_{L^{\infty}H^3}
\le \lambda^{-1+\frac{\epsilon_0}{2}}_{q+1},\label{estimate-wc}\\
&\|w_{q+1}\|_{L^2_{t,x}\cap L^pL^{\infty}}+{\lambda^{-9/2}_{q+1}}\|w_{q+1}\|_{L^{\infty}_tH^3}\le \frac{1}{2}C_0\delta^{1/2}_{q+1}.\label{estimate-w}
\end{align}
Moreover,
\begin{align}
  \|\wpq\|_{L^\infty_tL^2}\lesssim\ell^{-4}_q\lambda_{q+1},  \quad\|\wcq\|_{L^\infty_tL^2}\lesssim\ell^{-6}_q. \label{w-LinftyL2}
\end{align}
   \end{proposition}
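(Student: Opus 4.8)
\medskip

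Since $w_{q+1}=\wpq+\wcq+\wtq$, the plan is to bound each of the three pieces in the norms $L^2_{t,x}$, $L^p_tL^\infty_x$ and $L^\infty_tH^3$ and then to assemble \eqref{estimate-wp}--\eqref{estimate-w} by the triangle inequality. The only estimate which genuinely uses the intermittent structure is the $L^2_{t,x}$ bound on $\wpq$; all the rest are ``safe'' bounds in which the frequency gap $\lambda_q\ll\lambda_{q+1}$ (with $b\ge 2^{20}/\alpha$ as in \eqref{b-beta} and $a$ large) renders every error negligible, so I would not grind through those.

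For $\wpq$, the $L^2_{t,x}$ bound in \eqref{estimate-wp} is the heart of the matter. Since $g_{(2,\sigma_0,k)}g_{(2,\sigma_0,k')}\equiv 0$ for $k\ne k'\in\Lambda$, the cross terms vanish and $|\wpq|^2=\sum_{k\in\Lambda}a_{(k,q)}^2\,\phi_{(\epsilon_0,1-\epsilon_0,k)}^2\,g_{(2,\sigma_0,k)}^2$. I would then apply the improved H\"older inequality (Lemma \ref{Holder}), successively in $t$ (oscillation $\lambda_{q+1}^{\sigma_0}$) and along $k$ (oscillation $\lambda_{q+1}$), with slow factor $a_{(k,q)}$: since $\|a_{(k,q)}\|_{C^1}$ is only $\ell_q$-polynomial by Proposition \ref{est-ak}, the normalizations \eqref{P=0} reduce $\|\wpq\|_{L^2_{t,x}}^2$ to $\sum_{k\in\Lambda}\|a_{(k,q)}\|_{L^2_{t,x}}^2$ up to an error $\lesssim\ell_q^{-c}\lambda_{q+1}^{-1/2}$. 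By the definition \eqref{def-akq} and Lemma \ref{first S}, $\sum_{k\in\Lambda}a_{(k,q)}^2\,\bar k\otimes\bar k=\eta_q^2(\chi_q\rho_q)\,\mathrm{Id}-\eta_q^2\RR_{\ell_q}$; taking traces and using $\tr\RR_{\ell_q}=0$ gives $\sum_k a_{(k,q)}^2=3\eta_q^2\chi_q\rho_q$, so with $\rho_q\sim\delta_{q+1}$ (by \eqref{rho-bar}--\eqref{rho-q}) and the bound $\int_0^T\!\int_{\R^3}\eta_q^2\chi_q\,\dd x\,\dd t\lesssim 1$ (the computation proving \eqref{eta_qchi_q}, now over all of $[0,T]\times\R^3$, using $\|\RR_{\ell_q}\|_{L^1_{t,x}}\le\delta_{q+1}\lambda_q^{-4\alpha}$) one obtains $\|\wpq\|_{L^2_{t,x}}^2\lesssim\delta_{q+1}$. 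For the $L^p_tL^\infty_x$ and $L^\infty_tH^3$ parts a crude product bound suffices: by Proposition \ref{guji1}, $\|\phi_{(\epsilon_0,1-\epsilon_0,k)}\|_{L^\infty_x}\lesssim\lambda_{q+1}^{\epsilon_0/2}$, $\|g_{(2,\sigma_0,k)}\|_{L^p_t}\lesssim\lambda_{q+1}^{-(2/p-1)}$, $\|\phi_{(\epsilon_0,1-\epsilon_0,k)}\|_{H^3_x}\lesssim\lambda_{q+1}^3$, $\|g_{(2,\sigma_0,k)}\|_{L^\infty_t}\lesssim\lambda_{q+1}$, while Proposition \ref{est-ak} and Sobolev embedding give $\|a_{(k,q)}\|_{C^3}\lesssim\ell_q^{-c}$ and an $\lambda_q$-polynomial bound for $\|a_{(k,q)}\|_{L^\infty_{t,x}}$; using $2\epsilon_0\le 2/p-1$ from \eqref{epsilon} to absorb $\lambda_{q+1}^{\epsilon_0/2}$ and the fact that any $\lambda_q$-polynomial is $\ll\lambda_{q+1}^{\theta}$ for fixed $\theta>0$, these contributions are $\ll\delta_{q+1}^{1/2}$, which proves \eqref{estimate-wp}.

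The corrector $\wcq$ in \eqref{def-wcq} has the same skeleton as $\wpq$ but carries a prefactor $N_\Lambda^{-1}\lambda_{q+1}^{-1}$, at the cost of one extra derivative on the (still $\ell_q$-polynomial) amplitude through $\Div(a_{(k,q)}(k\otimes\bar k-\bar k\otimes k))$; repeating the three estimates above, now with $\psi'_{\lambda_{q+1}^{-\epsilon_0}}(\lambda_{q+1}^{1-\epsilon_0}N_\Lambda k\cdot x)$ in place of $\phi_{(\epsilon_0,1-\epsilon_0,k)}$ (which satisfies the identical bounds of Proposition \ref{guji1}), yields all the $\lambda_{q+1}^{-1+\epsilon_0/2}$ bounds of \eqref{estimate-wc}. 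For $\wtq$ I would convert the bounds of Proposition \ref{wtq-H3}: from $B^{1/2}_{2,1}\hookrightarrow L^2$ and $T<1$, $\|\wtq\|_{L^2_{t,x}}\lesssim\|\wtq\|_{\widetilde L^\infty_tB^{1/2}_{2,1}}\lesssim\delta_{q+2}\lambda_{q+1}^{-6\alpha}$; from the parabolic interpolation $\widetilde L^\infty_tB^{1/2}_{2,1}\cap\widetilde L^1_tB^{5/2}_{2,1}\hookrightarrow\widetilde L^2_tB^{3/2}_{2,1}$, the critical embedding $B^{3/2}_{2,1}(\R^3)\hookrightarrow L^\infty$, and $L^2_t\hookrightarrow L^p_t$ on $[0,T]$ (here $p<2$ enters), $\|\wtq\|_{L^p_tL^\infty_x}\lesssim\delta_{q+2}\lambda_{q+1}^{-6\alpha}$; and from $\widetilde L^\infty_tB^3_{2,2}\hookrightarrow L^\infty_tH^3$, $\lambda_{q+1}^{-9/2}\|\wtq\|_{L^\infty_tH^3}\lesssim\lambda_{q+1}^{-9/2}N_\Lambda K^{10}\ell_q^{-12}\lambda_{q+1}^{3+8\epsilon_0}\ll\delta_{q+1}^{1/2}$ since $8\epsilon_0<9/2$ and the $\lambda_q$-polynomial is dominated by $\lambda_{q+1}$.

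Summing the three pieces gives \eqref{estimate-w}, with $C_0$ fixed so that the $\wpq$-part is $\le\tfrac14C_0\delta_{q+1}^{1/2}$ and $a$ taken large enough that $\wcq$ and $\wtq$ together contribute $\le\tfrac14C_0\delta_{q+1}^{1/2}$. Finally \eqref{w-LinftyL2} follows by a direct computation at each fixed $t$, at which at most one $k\in\Lambda$ is active: $\|\wpq(t)\|_{L^2_x}\lesssim\|a_{(k,q)}(t)\|_{L^\infty_x}\|\phi_{(\epsilon_0,1-\epsilon_0,k)}\|_{L^2_x}\|g_{(2,\sigma_0,k)}\|_{L^\infty_t}\lesssim\ell_q^{-4}\lambda_{q+1}$, and similarly $\|\wcq(t)\|_{L^2_x}\lesssim\lambda_{q+1}^{-1}\|\psi'_{\lambda_{q+1}^{-\epsilon_0}}(\lambda_{q+1}^{1-\epsilon_0}N_\Lambda k\cdot x)\|_{L^2_x}\|g_{(2,\sigma_0,k)}\|_{L^\infty_t}\|\nabla a_{(k,q)}(t)\|_{L^\infty_x}\lesssim\ell_q^{-6}$, using Propositions \ref{guji1} and \ref{est-ak}. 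The main obstacle is precisely the $L^2_{t,x}$ bound on $\wpq$: a naive $L^\infty$ bound on $a_{(k,q)}$ overcounts by a power of $\lambda_q$, so one must exploit the intermittent structure---temporally disjoint supports, the improved H\"older inequality, and the trace-free cancellation inside the geometric decomposition---to see that the relevant quantity is $\|a_{(k,q)}\|_{L^2}^2\sim\delta_{q+1}$, not $\|a_{(k,q)}\|_{L^\infty}^2\,|\mathrm{supp}\,a_{(k,q)}|$.
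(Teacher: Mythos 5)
Your proposal follows essentially the same route as the paper: the improved H\"older inequality (Lemma \ref{Holder}) reduces $\|\wpq\|_{L^2_{t,x}}$ to $\|a_{(k,q)}\|_{L^2_{t,x}}\lesssim\|\chi_q\rho_q\|^{1/2}_{L^1_{t,x}}\lesssim\delta^{1/2}_{q+1}$ (your trace identity $\sum_k a_{(k,q)}^2=3\eta_q^2\chi_q\rho_q$ is just a sharper form of the paper's pointwise bound), the remaining $L^p_tL^\infty$, $L^\infty_tH^3$ and $L^\infty_tL^2$ bounds are the same crude product estimates via Propositions \ref{guji1} and \ref{est-ak}, and $\wtq$ is absorbed through the embedding of the Lerner--Chemin bounds of Proposition \ref{wtq-H3}, exactly as in the paper. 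The only slip is cosmetic: for the $H^3$ contribution of $\wtq$ the arithmetic needed is $3+8\epsilon_0<\tfrac92$, i.e. $8\epsilon_0<\tfrac32$, rather than ``$8\epsilon_0<\tfrac92$'', and this of course holds since $\epsilon_0\le 2^{-13}$.
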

\begin{proof}By the definition of $\wpq$ in \eqref{def-wpq-1} and Lemma \ref{Holder}, we have
  \begin{align*}
\|w^{(p)}_{q+1}\|_{L^2_{t,x}}
\lesssim& \|a_{(k,q)}\|_{L^2_{t,x}}\|g_{(2, \sigma_0, k)}(t)\|_{L^2_t}\|\phi_{(\epsilon_0, 1-\epsilon_0, k)}(x) \|_{L^2_x}\\
&+\lambda^{-\frac{\sigma_0}{2}-\frac{1-\epsilon_0}{2}}_{q+1}\|a_{(k,q)}\|_{C^1_{t,x}}\|g_{(2, \sigma_0, k)}(t)\|_{L^2_t}\|\phi_{(\epsilon_0, 1-\epsilon_0, k)}(x) \|_{L^2_x}\\
\lesssim&K^{2}\|a_{(k,q)}\|_{L^2_{t,x}}+K^{3/2}\lambda^{-\frac{\sigma_0}{2}}_{q+1}\|a_{(k,q)}\|_{C^1_{t,x}}.
\end{align*}
From the definition of $a_{(k,q)}$, one deduces that
 \begin{align*}
\|a_{(k,q)}\|_{L^2_{t,x}}\lesssim\|\chi_q\rho_q\|^{1/2}_{L^1_{t,x}}\lesssim\delta^{1/2}_{q+1}.
 \end{align*}
By Proposition \ref{est-ak}, we have
 \begin{align*}
 \|a_{(k,q)}\|_{C^{1}_{t,x}}\lesssim\ell^{-12}_q.
 \end{align*}
 Hence,  there exists a universal constant $C_0>K^{2}$ such that
 \begin{align*}
\|w^{(p)}_{q+1}\|_{L^2_{t,x}}\lesssim& K^{2}\delta^{1/2}_{q+1}+K^{3/2}\ell^{-12}_q\lambda^{-\frac{\sigma_0}{2}-\frac{1-\epsilon_0}{2}}_{q+1}
\le \frac{1}{8}C_0\delta^{1/2}_{q+1}.
 \end{align*}
 Thanks to Proposition \ref{guji1}, one gets
 \begin{align}\label{est-wp-Lp}
\|w^{(p)}_{q+1}\|_{ L^pL^{\infty}}
\le& \|a_{(k,q)}\|_{L^\infty_{t,x}}\|g_{(2, \sigma_0, k)}(t)\|_{L^p}\|\phi_{(\epsilon_0, 1-\epsilon_0, k)}(x) \|_{L^\infty(\TTT^3)}\nonumber\\
\lesssim &K^{3/2}\lambda^{5+4\alpha}_{q}\lambda^{-2(\frac{1}{p}-\frac{1}{2})+\frac{\epsilon_0}{2}}_{q+1}\le \frac{1}{8}C_0\delta^{1/2}_{q+1}.
\end{align}
We infer from Proposition \ref{est-ak} that
\begin{align*}
\|w^{(p)}_{q+1}\|_{L^\infty_tH^3}
\le& \|a_{(k,q)}\|_{L^\infty_tH^3}\|g_{(2, \sigma_0, k)}(t)\|_{L^\infty_t}\|\phi_{(\epsilon_0, 1-\epsilon_0, k)}(x) \|_{H^3(\TTT^3)}\\
\lesssim&N^3_{\Lambda}K^{3/2}\ell^{-6}_q\lambda^{4}_{q+1}\le \frac{1}{8}C_0\lambda^{9/2}_{q+1}\delta^{1/2}_{q+1}.
\end{align*}
Collecting the above three estimates together shows \eqref{estimate-wp}.

Applying Lemma \ref{Holder} and Proposition \ref{guji1} to $\wcq$ in \eqref{def-wcq}, we have
\begin{align*}
\|w^{(c)}_{q+1}\|_{L^2_{t,x}}
\lesssim &\lambda^{-1}_{q+1} \|a_{(k,q)}\|_{L^2_tH^1}\|g_{(2, \sigma_0, k)}(t)\|_{L^2_t}\|\psi'_{\lambda^{-\epsilon_0}_{q+1}}(\lambda^{1-\epsilon_0}_{q+1}N_{\Lambda}k\cdot x)  \|_{L^2(\TTT^3)}\\
&+\lambda^{-\frac{\sigma_0}{2}-\frac{1-\epsilon_0}{2}-1}_{q+1}\|a_{(k,q)}\|_{C^{1}_{t,x}}\|g_{(2, \sigma_0, k)}(t)\|_{L^2_t}\|\psi'_{\lambda^{-\epsilon_0}_{q+1}}(\lambda^{1-\epsilon_0}_{q+1}N_{\Lambda}k\cdot x) \|_{L^2(\TTT^3)}\\
\lesssim&K^{3/2}\lambda^{-1}_{q+1} \|a_{(k,q)}\|_{L^\infty_tH^1}+K^{3/2}\lambda^{-\frac{\sigma_0}{2}-\frac{1-\epsilon_0}{2}-1}_{q+1}\|a_{(k,q)}\|_{C^{1}_{t,x}}.
\end{align*}
Then utilizing Proposition \ref{est-ak} yields that
\begin{align*}
\|w^{(c)}_{q+1}\|_{L^2_{t,x}}
\lesssim K^{3/2}\ell^{-2}_q\lambda^{-1}_{q+1}.
\end{align*}
By Proposition \ref{guji1} and Proposition \ref{est-ak}, we have
\begin{align}
\|w^{(c)}_{q+1}\|_{L^pL^{\infty}}
\le &\lambda^{-1}_{q+1}\|a_{(k,q)}\|_{L^\infty_{t,x}}\|g_{(2, \sigma_0, k)}(t)\|_{L^p}\|\psi'_{\lambda^{-\epsilon_0}_{q+1}}(\lambda^{1-\epsilon_0}_{q+1}N_{\Lambda}k\cdot x)\|_{L^\infty(\TTT^3)}\nonumber\\
\lesssim&K^{1/2}\lambda^{5+4\alpha}_{q}\lambda^{-1-2(\frac{1}{p}-\frac{1}{2})+\frac{\epsilon_0}{2}}_{q+1}\label{est-wcq-Lp}
\end{align}
and
\begin{align*}
\|w^{(c)}_{q+1}\|_{L^\infty_tH^3}\le &\lambda^{-1}_{q+1}\|a_{(k,q)}\|_{L^\infty_{t}H^3}\|g_{(2, \sigma_0, k)}(t)\|_{L^\infty}\|\psi'_{\lambda^{-\epsilon_0}_{q+1}}(\lambda^{1-\epsilon_0}_{q+1}N_{\Lambda}k\cdot x)\|_{H^3(\TTT^3)}\\
\lesssim&N^3_{\Lambda}K^{3/2}\ell^{-6}_q\lambda^3_{q+1}.
\end{align*}
Hence, we obtain \eqref{estimate-wc}. Due to
\begin{align*}
    L^\infty_t B^{1/2}_{2,1}\cap L^1_t B^{5/2}_{2,1} \hookrightarrow L^2_{t,x}\cap L^p_tL^\infty, \quad 1\le p<2,
\end{align*}
the estimates \eqref{estimate-wt}, \eqref{estimate-wp} together with \eqref{estimate-wc}  imply \eqref{estimate-w}. {Moreover, by  Proposition~\ref{guji1} and Proposition~\ref{est-ak}, we obtain that
\begin{align*}
    \|\wpq\|_{L^\infty_tL^2}
\lesssim& \|a_{(k,q)}\|_{L^\infty_tL^\infty}\|g_{(2, \sigma_0, k)}(t)\|_{L^\infty_t}\|\phi_{(\epsilon_0, 1-\epsilon_0, k)}(x) \|_{L^2(\TTT^3)}
\lesssim \ell^{-4}_q\lambda_{q+1},\end{align*}
and
\begin{align*}
    \|\wcq\|_{L^\infty_tL^2}
\lesssim& \lambda^{-1}_{q+1}\|a_{(k,q)}\|_{L^\infty_tW^{1,\infty}}\|g_{(2, \sigma_0, k)}(t)\|_{L^\infty_t}\|\psi'_{\lambda^{-\epsilon_0}_{q+1}}(\lambda^{1-\epsilon_0}_{q+1}N_{\Lambda}k\cdot x)  \|_{L^2(\TTT^3)}
\lesssim\ell^{-6}_q.\end{align*}}
This shows \eqref{w-LinftyL2}. Therefore, we complete the proof of Proposition \ref{estimate-wq+1}.
\end{proof}
\subsubsection{Estimates for the Reynolds stress $\RR_{q+1}$} Letting
\begin{align*}
&u_{q+1}=\bar{u}_q+w_{q+1}, \\
&p_{q+1}=p_t+\bar{p}_q-P^{(1)}_{q+1}-\eta^2_q (\chi_q \rho_q)^{1/2}+\frac{2}{3}(\wpq+\wcq)\cdot\vv_q+\frac{2}{3}\wtq\cdot\ubql\\
&\qquad\quad+{\frac{2}{3}\wpq\cdot\wcq+\frac{2}{3}\wpq\cdot\wtq+\frac{2}{3}\wtq\cdot\wcq+\frac{1}{3}\wcq\cdot\wcq},
\end{align*}
where $w_{q+1}=\wpq+\wcq+\wtq$, we have from \eqref{e:mollified-euler} that
\begin{align*}
&\del_tu_{q+1}-\Delta u_{q+1} +\Div(u_{q+1}\otimes u_{q+1})+\nabla p_{q+1}\\
=&\partial_t w_{q+1}-\Delta w_{q+1}+\Div(w_{q+1}\otimes \vv_q)+\Div(\vv_q\otimes w_{q+1})+\Div( w_{q+1}\otimes w_{q+1})\\
&+\nabla p_t+\Div \RR_{\ell_q} +(1-\zeta_q)\Div \RR^{rem}_q+\partial_t\zeta_q(u_q-u_{\ell_q})+\zeta_q(1-\zeta_q)\Div ((u_q-u_{\ell_q})\otimes (u_q-u_{\ell_q}))\\
=& \partial_t(\wpq+\wcq)-\Delta(\wpq+\wcq)+\Div((\wpq+\wcq)\ootimes \vv_q)+\Div( \vv_q\ootimes (\wpq+\wcq))\\
&+\Div(\wtq\ootimes \ubql)+\Div( \ubql\ootimes \wtq)+\Div \RR_{\ell_q}\\
&+\partial_t \wtq-\Delta \wtq+\Div(\wtq\otimes \ubqnl)+\Div( \ubqnl\otimes \wtq)+\nabla p_t-\nabla P^{(1)}_{q+1}\\
&+\Div( \wpq\otimes \wpq)-\nabla\big(\eta^2_q (\chi_q \rho_q)^{1/2}\big)+(1-\zeta_q)\Div \RR^{rem}_q+\partial_t\zeta_q(u_q-u_{\ell_q})\\
&+{\Div\big(\wpq\ootimes (\wcq+\wtq)+\wcq\ootimes w_{q+1}+\wtq\ootimes(\wpq+\wcq)\big)}\\
&+\zeta_q(1-\zeta_q)\Div ((u_q-u_{\ell_q})\otimes (u_q-u_{\ell_q})).
\end{align*}

Since $\wtq$ satisfies the equations \eqref{e:wt}, by Proposition \ref{F1}--\ref{def-F2}, we obtain from the above equality that
\begin{align}
&\del_tu_{q+1}-\Delta u_{q+1} +\Div(u_{q+1}\otimes u_{q+1})+\nabla p_{q+1}\nonumber\\
=& \Div \RR^{(1)}_{q+1}+\Div \RR^{(2)}_{q+1}-\Delta(\wpq+\wcq)+\Div((\wpq+\wcq)\ootimes \vv_q)\nonumber\\
&+\Div( \vv_q\ootimes (\wpq+\wcq))+\Div(\wtq\otimes \ubql)+\Div( \ubql\otimes \wtq)+\Div \RR^{(3)}_{q+1}\nonumber\\
&+{\Div\big(\wpq\ootimes (\wcq+\wtq)+\wcq\ootimes w_{q+1}+\wtq\ootimes(\wpq+\wcq)\big)}\nonumber\\
=&\Div \big(\RR^{(1)}_{q+1}+ \RR^{(2)}_{q+1}+ \RR^{(3)}_{q+1}+(\wpq+\wcq)\ootimes \vv_q+\vv_q\ootimes (\wpq+\wcq)\nonumber\\
&+\wtq\ootimes \ubql+ \ubql\ootimes \wtq- \big(\nabla(\wpq+\wcq)+(\nabla(\wpq+\wcq))^{\T}\big)\nonumber\\
&+{\wpq\ootimes (\wcq+\wtq)+\wcq\ootimes w_{q+1}+\wtq\ootimes(\wpq+\wcq)\big)}\nonumber\\
=:&\Div \RR_{q+1}.\label{def-R-q+1}
\end{align}
Now we are focused on estimating $\RR_{q+1}$. According to the definition of $\RR_{q+1}$ in \eqref{def-R-q+1}, $\supp_x\RR_{q+1}$ is determined by the supports of  $\RR^{(1)}_{q+1}, \RR^{(2)}_{q+1}, \RR^{(3)}_{q+1}, \wpq, \wcq$ and $\ubql$. From  $\RR^{(1)}_{q+1} $ and $\RR^{(2)}_{q+1}$ in Proposition \ref{F1}, $\RR^{(3)}_{q+1}$ in Proposition \ref{def-F2}, $\wpq$ in \eqref{def-wpq} and $\wcq$ in \eqref{def-wcq}, we infer that
\begin{align*}
\supp_x\RR^{(1)}_{q+1}, \RR^{(2)}_{q+1}, \RR^{(3)}_{q+1}, \wpq, \wcq=\supp_x a_{(k,q)}\subseteq\Omega_{q+1}.
\end{align*}
This relation together with \eqref{e:ubql} implies that
\begin{align}\label{suppx-R}
    \supp_x \RR_{q+1}\subseteq\Omega_{q+1}\subseteq  \big[-\tfrac{K}{2}, \tfrac{K}{2}\big]^3.
\end{align}
\begin{proposition}[Estimates for $\RR_{q+1}$]\label{R-q+1}Let $\RR_{q+1}$ be defined in \eqref{def-R-q+1}, it holds that
\begin{align}
    & \|\RR_{q+1}\|_{L^1_{t,x}}\le \delta_{q+2}\lambda^{-4\alpha}_{q+1},\label{R—L1}\\
    &\|\RR_{q+1}\|_{L^\infty W^{3,1}}\le \lambda^5_{q+1}. \label{R—W}
\end{align}
\end{proposition}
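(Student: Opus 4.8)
The plan is to bound, group by group, the matrix $\RR_{q+1}$ exhibited in \eqref{def-R-q+1}, which is the sum of: the temporal‑error matrices $\RR^{(1)}_{q+1}+\RR^{(2)}_{q+1}$ of Proposition \ref{F1}; the oscillation‑error matrix $\RR^{(3)}_{q+1}$ of Proposition \ref{def-F2}; the Nash/linear pieces $(\wpq+\wcq)\ootimes\vv_q+\vv_q\ootimes(\wpq+\wcq)$; the transport pieces $\wtq\ootimes\ubql+\ubql\ootimes\wtq$; the dissipative piece $-\bigl(\nabla(\wpq+\wcq)+(\nabla(\wpq+\wcq))^{\T}\bigr)$; and the quadratic self‑interaction pieces $\wpq\ootimes(\wcq+\wtq)+\wcq\ootimes w_{q+1}+\wtq\ootimes(\wpq+\wcq)$ (the terms $\wpq\ootimes\wpq$ and $\wtq\ootimes\wtq$ do not appear: the first is absorbed through Proposition \ref{def-F2}, the second through the defining equation \eqref{e:wt} of $\wtq$, whose force $F_{q+1}$ also absorbs $F^{(1)}_{q+1},F^{(2)}_{q+1},F^{(3)}_{q+1},\RR^{rem}_q$ and the gluing errors). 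For the claimed $L^1_{t,x}$‑bound I will show each piece is $\lesssim$ (a negative power of $\lambda_{q+1}$) times (a fixed power of $\lambda_q,\ell_q^{-1},K,N_\Lambda,M$); since $\lambda_{q+1}=\lambda_q^b$ with $b\ge 2^{20}/\alpha$ large enough to beat any fixed power of $\lambda_q$, and since the target $\delta_{q+2}\lambda^{-4\alpha}_{q+1}$ carries only the minuscule negative exponent $\simeq-4\alpha$ in $\lambda_{q+1}$ whereas the gains produced are of size $\lambda^{-\epsilon_0/2}_{q+1}$, $\lambda^{-(1-\epsilon_0)}_{q+1}$ or $\lambda^{-1}_{q+1}$ — all far larger since $\alpha<\sigma_0/100<\epsilon_0/2000$ — the estimate follows once $a$ is large. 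For the claimed $L^\infty_tW^{3,1}_x$‑bound I use that every summand is spatially supported in $[-\tfrac{K}{2},\tfrac{K}{2}]^3$ by \eqref{suppx-R}, hence $\|\cdot\|_{W^{3,1}_x}\lesssim K^{3/2}\|\cdot\|_{H^3_x}$, together with the crude product rule $\|fg\|_{H^3}\lesssim\|f\|_{H^3}\|g\|_{H^3}$ and the $H^3$‑estimates already proved.

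All pieces of the $L^1_{t,x}$‑bound except the Nash and dissipative ones are routine. Each of $\RR^{(1)}_{q+1},\RR^{(2)}_{q+1},\RR^{(3)}_{q+1}$ is a finite sum (over $k\in\Lambda$, and over $m,l\in\ZZ\setminus\{0\}$ for $\RR^{(3)}_{q+1}$) of products of a purely spatial oscillatory profile ($\psi'_{\lambda^{-\epsilon_0}_{q+1}}(\cdots)$, $\psi_{\lambda^{-\epsilon_0}_{q+1}}(\cdots)$, or $e^{\ii2\pi\lambda^{1-\epsilon_0}_{q+1}(l+m)N_\Lambda k\cdot x/K}$) with a compactly supported, slowly varying factor built from $\partial_t(a_{(k,q)}g_{(2,\sigma_0,k)})$ or $\Div(a^2_{(k,q)}g^2_{(2,\sigma_0,k)}\bar k\otimes\bar k)$; I bound the $L^1_x$‑norm of the oscillatory profile by Proposition \ref{guji1} (intermittency gain $\|\phi_{(\epsilon_0,1-\epsilon_0,k)}\|_{L^1_x(\TTT^3)}\lesssim K^{5/2}\lambda^{-\epsilon_0/2}_{q+1}$ and likewise for $\psi',\psi$, or merely $K^{O(1)}$ for the modulus‑$1$ exponential), the $L^1_t$‑ or $L^2_t$‑norm of $g_{(2,\sigma_0,k)}$ by Proposition \ref{guji1} (temporal‑concentration gain $\|g_{(2,\sigma_0,k)}\|_{L^1_t}\lesssim\lambda^{-1}_{q+1}$, $\|g_{(2,\sigma_0,k)}\|_{L^2_t}\lesssim1$), and the $a_{(k,q)}$‑factors (in $W^{3,\infty}$ etc.) by Proposition \ref{est-ak}; the explicit prefactors $\lambda^{-1}_{q+1},\lambda^{-2}_{q+1},\lambda^{-(1-\epsilon_0)}_{q+1}$ then overwhelm the $\ell_q^{-O(1)}$ growth, and the $m,l$‑sums converge since $|c_{m,\epsilon_0}|\lesssim K^{9/2}\lambda^{9\epsilon_0/2}_{q+1}m^{-4}$ by Proposition \ref{est-cmr}. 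For the transport pieces, Hölder with $B^{1/2}_{2,1}\hookrightarrow L^2$ gives $\|\wtq\ootimes\ubql\|_{L^1_{t,x}}\lesssim\|\wtq\|_{L^2_{t,x}}\|\ubql\|_{L^2_{t,x}}\lesssim M\delta_{q+2}\lambda^{-6\alpha}_{q+1}$ using \eqref{estimate-wt} and \eqref{e:ubql}; and the quadratic self‑interactions — in which only the diagonal $k=k'$ survives because $g_{(2,\sigma_0,k)}g_{(2,\sigma_0,k')}=0$ for $k\neq k'$ — are controlled by exposing the profiles as above together with $\|\wcq\|_{L^2_{t,x}}\lesssim K^{3/2}\ell_q^{-2}\lambda^{-1}_{q+1}$ (carrying the extra $\lambda^{-1}_{q+1}$ of \eqref{def-wcq}), $\|\wpq\|_{L^2_{t,x}}\lesssim\delta^{1/2}_{q+1}$ and $\|\wtq\|_{L^2_{t,x}}\lesssim\delta_{q+2}\lambda^{-6\alpha}_{q+1}$ (Propositions \ref{estimate-wq+1}, \ref{wtq-H3}), each pairing yielding a negative power of $\lambda_{q+1}$.

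The Nash and dissipative pieces are the main obstacle, precisely because no antidivergence operator is available to absorb a derivative or a low–high frequency imbalance; they must be made small directly in $L^1_{t,x}$. For the Nash term write $\wpq\vv_q=a_{(k,q)}\vv_q\,\phi_{(\epsilon_0,1-\epsilon_0,k)}(x)\,g_{(2,\sigma_0,k)}(t)\,\bar k$; since $a_{(k,q)}$ is supported in $[-\tfrac{K}{2},\tfrac{K}{2}]^3$, which is one period of the $\TTT^3$‑periodic profile $\phi_{(\epsilon_0,1-\epsilon_0,k)}$, one gets $\|\wpq\vv_q\|_{L^1_{t,x}}\le\|a_{(k,q)}\|_{L^\infty_{t,x}}\|\vv_q\|_{L^\infty_{t,x}}\|g_{(2,\sigma_0,k)}\|_{L^1_t}\|\phi_{(\epsilon_0,1-\epsilon_0,k)}\|_{L^1_x(\TTT^3)}\lesssim\ell_q^{-O(1)}\lambda^5_q\cdot\lambda^{-1}_{q+1}\cdot K^{5/2}\lambda^{-\epsilon_0/2}_{q+1}$, where $\|\vv_q\|_{L^\infty_{t,x}}\lesssim\|\vv_q\|_{L^\infty_tH^3}\lesssim\lambda^5_q$ (from \eqref{e:ubq}; this absorbs the non‑compactly‑supported part $\ubqnl$ of $\vv_q$ via \eqref{e:ubqnl}); the $\wcq$‑part is even smaller by the extra $\lambda^{-1}_{q+1}$ of \eqref{def-wcq}. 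For the dissipative term, $\nabla\wpq=(\nabla a_{(k,q)})\phi_{(\epsilon_0,1-\epsilon_0,k)}g_{(2,\sigma_0,k)}\bar k+a_{(k,q)}(\nabla\phi_{(\epsilon_0,1-\epsilon_0,k)})g_{(2,\sigma_0,k)}\bar k$, and its dominant (second) term obeys $\lesssim\|a_{(k,q)}\|_{L^\infty_{t,x}}\|g_{(2,\sigma_0,k)}\|_{L^1_t}\|\nabla\phi_{(\epsilon_0,1-\epsilon_0,k)}\|_{L^1_x(\TTT^3)}\lesssim\ell_q^{-O(1)}\cdot\lambda^{-1}_{q+1}\cdot N_\Lambda K^{5/2}\lambda^{1-\epsilon_0/2}_{q+1}=N_\Lambda K^{5/2}\ell_q^{-O(1)}\lambda^{-\epsilon_0/2}_{q+1}$, where the temporal concentration $\|g_{(2,\sigma_0,k)}\|_{L^1_t}\lesssim\lambda^{-1}_{q+1}$ precisely cancels the single‑derivative cost $\lambda_{q+1}$ of $\nabla\phi_{(\epsilon_0,1-\epsilon_0,k)}$ and leaves the net intermittency gain $\lambda^{-\epsilon_0/2}_{q+1}$ (and $\nabla\wcq$ is identical, with one more factor $\lambda^{-1}_{q+1}$). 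Since $\epsilon_0/2>4\alpha$ while $\lambda^5_q$ and $\ell_q^{-O(1)}$ are only fixed positive powers of $\lambda_q$, all these bounds are $\le\delta_{q+2}\lambda^{-4\alpha}_{q+1}$ for $a$ large, which completes the $L^1_{t,x}$‑estimate.

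For the $L^\infty_tW^{3,1}_x$‑estimate, every summand of $\RR_{q+1}$ being supported in $[-\tfrac{K}{2},\tfrac{K}{2}]^3$ by \eqref{suppx-R}, it suffices to control each in $L^\infty_tH^3_x$, and products are handled by $\|fg\|_{H^3}\lesssim\|f\|_{H^3}\|g\|_{H^3}$ together with the established bounds $\|\wpq\|_{L^\infty_tH^3},\|\wcq\|_{L^\infty_tH^3}\lesssim\lambda^{9/2}_{q+1}$ (Proposition \ref{estimate-wq+1}), $\|\wtq\|_{L^\infty_tH^3}\lesssim N_\Lambda K^{10}\ell_q^{-12}\lambda^{3+8\epsilon_0}_{q+1}$ (Proposition \ref{wtq-H3}), $\|\vv_q\|_{L^\infty_tH^3}\lesssim\lambda^5_q$ (from \eqref{e:ubq}), the $W^{N,\infty}$‑norms of $a_{(k,q)}$ and $\partial_t a_{(k,q)}$ of order $\ell_q^{-O(1)}$ (Proposition \ref{est-ak}), the $\phi,\psi',\psi,g$‑bounds (Proposition \ref{guji1}), and $|c_{m,\epsilon_0}|\lesssim K^{9/2}\lambda^{9\epsilon_0/2}_{q+1}m^{-4}$ (Proposition \ref{est-cmr}) for convergence of the $m,l$‑sums in $\RR^{(3)}_{q+1}$. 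Each piece is then $\lesssim\lambda^{9/2+O(\epsilon_0)}_{q+1}$ times a fixed power of $\lambda_q$; as $9/2<5$ and $b$ is large, this is $\le\lambda^5_{q+1}$ for $a$ large, which finishes the proof.
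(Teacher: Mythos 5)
Your $L^1_{t,x}$ half is essentially the paper's own argument: the same term-by-term decomposition of \eqref{def-R-q+1}, with each piece beaten by the $L^1_x(\TTT^3)$ intermittency gain $\lambda^{-\epsilon_0/2}_{q+1}$ of the shear profiles, the temporal gain $\|g_{(2,\sigma_0,k)}\|_{L^1_t}\lesssim\lambda^{-1}_{q+1}$, the explicit prefactors $\lambda^{-1}_{q+1},\lambda^{-2}_{q+1},\lambda^{-(1-\epsilon_0)}_{q+1}$, and $L^2\times L^2$ or $L^\infty\times L^1$ H\"older for the transport/quadratic pieces; your slightly different pairings (e.g. $\|\vv_q\|_{L^\infty_{t,x}}\lesssim\lambda^5_q$ instead of $\|\wpq\|_{L^1_tL^\infty}\|\vv_q\|_{L^\infty_tL^2}$, and $\|\wtq\|_{L^2_{t,x}}\|\ubql\|_{L^2_{t,x}}$) are harmless. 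Do note, though, that for $\RR^{(1)}_{q+1}$ and $\RR^{(2)}_{q+1}$ the time factor is $\partial_t(a_{(k,q)}g_{(2,\sigma_0,k)})$, so the relevant bound is $\|\partial_t g_{(2,\sigma_0,k)}\|_{L^1_t}\lesssim\lambda^{1+\sigma_0}_{q+1}$, and the prefactor $\lambda^{-1}_{q+1}$ alone does not close it: you need the $\lambda^{-\epsilon_0/2}_{q+1}$ intermittency gain and $\sigma_0\le\epsilon_0/20$, exactly as in \eqref{es-R1}--\eqref{es-R2}; your write-up lists the ingredients but never records this $\sigma_0$ loss.

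The $W^{3,1}$ half has a genuine gap. Your plan is to bound every summand in $L^\infty_tH^3$ and convert by $\|\cdot\|_{W^{3,1}}\lesssim K^{3/2}\|\cdot\|_{H^3}$ on the support; but the intermittency gain exists only in $L^p$ with $p<2$ (the profiles are $L^2$-normalized), so this route forfeits the factor $\lambda^{-\epsilon_0/2}_{q+1}$, and the estimate \eqref{R—W} is nearly saturated, so there is no room to lose it. Concretely, $\|\RR^{(1)}_{q+1}\|_{L^\infty_tH^3}\sim\lambda^{-1}_{q+1}\|\partial_t(a_{(k,q)}g_{(2,\sigma_0,k)})\|_{L^\infty_tW^{3,\infty}}\|\psi'_{\lambda^{-\epsilon_0}_{q+1}}\|_{H^3(\TTT^3)}\sim\ell_q^{-O(1)}\lambda^{3+\sigma_0}_{q+1}\lambda^{3}_{q+1}\lambda^{-1}_{q+1}=\ell_q^{-O(1)}\lambda^{5+\sigma_0}_{q+1}$, and $\|\nabla(\wpq+\wcq)\|_{L^\infty_tH^3}\lesssim\|\wpq+\wcq\|_{L^\infty_tH^4}\sim\ell_q^{-O(1)}\lambda^{5}_{q+1}$; both exceed $\lambda^5_{q+1}$ by factors $\lambda^{\sigma_0}_{q+1}$ and $\ell_q^{-O(1)}=\lambda_{q+1}^{O(1)/b}$ that grow with $a$, so "take $a$ large" cannot rescue the claim that each piece is $\lesssim\lambda^{9/2+O(\epsilon_0)}_{q+1}$ times a fixed power of $\lambda_q$. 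Likewise the crude product rule $\|fg\|_{H^3}\lesssim\|f\|_{H^3}\|g\|_{H^3}$ applied to the quadratic pieces gives e.g. $\|\wpq\ootimes\wcq\|\lesssim\lambda^{9/2}_{q+1}\cdot\lambda^{3+\epsilon_0/2}_{q+1}$, far above $\lambda^5_{q+1}$. The paper avoids all of this by estimating the borderline terms directly in $W^{3,1}$, measuring the fast profiles in $W^{N,1}(\TTT^3)$ via Proposition \ref{guji1} at $p=1$ (so $\RR^{(1)}_{q+1}$ comes out as $K^{5/2}\ell_q^{-10}\lambda^{5+\sigma_0-\epsilon_0/2}_{q+1}$, with $\sigma_0<\epsilon_0/2$), and by using the bilinear estimate $\|fg\|_{W^{3,1}}\lesssim\|f\|_{L^2}\|g\|_{H^3}+\|f\|_{H^3}\|g\|_{L^2}$ together with the $L^\infty_tL^2$ bounds \eqref{w-LinftyL2} for the quadratic pieces. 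You would need to redo your $W^{3,1}$ step along these lines; as written it does not prove \eqref{R—W}.
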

\begin{proof}By the definitions of $\RR^{(1)}_{q+1}$ and $\RR^{(2)}_{q+1}$ in Proposition \ref{F1},  we infer from Proposition \ref{guji1}, Proposition \ref{est-ak} and \eqref{suppx-R} that
\begin{align}
    \|\RR^{(1)}_{q+1}\|_{L^1_{t,x}}
    \lesssim&\lambda^{-1}_{q+1}\|\partial_t g_{(2, \sigma_0, k)}\|_{L^1}\|{a}_{(k,q)}\|_{L^\infty_{t,x}}\|\psi'_{\lambda^{-\epsilon_0}_{q+1}}(\lambda^{1-\epsilon_0}_{q+1}N_{\Lambda}k\cdot x)\|_{L^1(\TTT^3)}\nonumber\\
    &+\lambda^{-1}_{q+1}\| g_{(2, \sigma_0, k)}\|_{L^1}\|\partial_t {a}_{(k,q)}\|_{L^\infty_{t,x}}\|\psi'_{\lambda^{-\epsilon_0}_{q+1}}(\lambda^{1-\epsilon_0}_{q+1}N_{\Lambda}k\cdot x)\|_{L^1(\TTT^3)}\nonumber\\
\lesssim&K^{5/2}\ell^{-12}_q\lambda^{\sigma_0-\frac{\epsilon_0}{2}}_{q+1},\label{es-R1}
\end{align}
and
  \begin{align}
    \|\RR^{(2)}_{q+1}\|_{L^1_{t,x}}\lesssim& \lambda^{-2}_{q+1}\|\psi_{\lambda^{-\epsilon_0}_{q+1}}(\lambda^{1-\epsilon_0}_{q+1}N_{\Lambda}k\cdot x)\|_{L^1(\TTT^3)}\| {a}_{(k,q)}\|_{L^\infty_tW^{1,\infty}}\|\partial_t g_{(2,\sigma_0,k)}\|_{L^1}\nonumber\\
    &+\lambda^{-2}_{q+1}\|\psi_{\lambda^{-\epsilon_0}_{q+1}}(\lambda^{1-\epsilon_0}_{q+1}N_{\Lambda}k\cdot x)\|_{L^1(\TTT^3)}\|\partial_t {a}_{(k,q)}\|_{{L^\infty_tW^{1,\infty}}}\| g_{(2,\sigma_0,k)}\|_{L^1}\nonumber\\
\lesssim&K^{5/2}\ell^{-18}_q\lambda^{\sigma_0-\frac{\epsilon_0}{2}-1}_{q+1}.\label{es-R2}
\end{align}
By the definition of $\RR^{(3)}_{q+1}$ in Proposition \ref{def-F2}, using \eqref{cm-2}, Proposition \ref{guji1} and Proposition \ref{est-ak}, one gets that
\begin{align}
    \|R^{(3)}_{q+1}\|_{L^1_{t,x}}\lesssim& K\sum_{m,l \in\ZZ\backslash\{0\}, m+l\neq 0} \frac{|c_{l, \epsilon_0}||c_{m, \epsilon_0}|}{\lambda^{1-\epsilon_0}_{q+1}|l+m|}\|a^2_{(k,q)}\|_{L^\infty_tW^{1,\infty}}\|g^2_{(2,\sigma_0,k)}\|_{L^1_t}\nonumber\\
   \lesssim& K^{10}\sum_{m,l \in\ZZ\backslash\{0\}, m+l\neq 0} |l+m|^{-5}\lambda^{-1+10\epsilon_0}_{q+1} \lambda^{5+4\alpha}_q\ell^{-6}_q\nonumber\\
\lesssim&K^{10}\lambda^{-1+10\epsilon_0}_{q+1} \lambda^{5+4\alpha}_q\ell^{-6}_q.\label{es-R3}
\end{align}
With the aid of \eqref{wpq+wcq}, we have
\begin{align*}
\|\nabla(\wpq+\wcq)\|_{L^1_{t,x}}
\lesssim&\lambda^{-1}_{q+1}\|a_{(k,q)}  g_{(2, \sigma_0, k)}(t)\psi'_{\lambda^{-\epsilon_0}_{q+1}}(\lambda^{1-\epsilon_0}_{q+1}N_{\Lambda}k\cdot x)\|_{L^1_tW^{2,1}}\\
\lesssim&\lambda^{-1}_{q+1}\|g_{(2, \sigma_0, k)}\|_{L^1}\|a_{(k,q)}\|_{L^\infty_tW^{2,\infty}}\|\psi'_{\lambda^{-\epsilon_0}_{q+1}}(\lambda^{1-\epsilon_0}_{q+1}N_{\Lambda}k\cdot x)\|_{W^{2,1}(\TTT^3)}.
\end{align*}
By Proposition \ref{guji1} and Proposition \ref{est-ak}, one obtains that
\begin{align*}
\|\nabla(\wpq+\wcq)\|_{L^1_{t,x}}\lesssim&K^{5/2}\ell^{-8}_q\lambda^{-\frac{\epsilon_0}{2}}_{q+1}.
\end{align*}
Making use of \eqref{e:ubq}, \eqref{estimate-wc} and \eqref{est-wp-Lp}, we have
\begin{align}
&\|(\wpq+\wcq)\ootimes \vv_q+\vv_q\ootimes (\wpq+\wcq)\|_{L^1_{t,x}}\nonumber\\
\lesssim&K^{3/2}(\|\wpq\|_{L^1_tL^\infty}+\|\wcq\|_{L^1_tL^\infty})\|\vv_q\|_{L^\infty_{t}L^2}\nonumber\\
\lesssim&K^{3/2}M\lambda^{5+4\alpha}_q\lambda^{-1+\frac{\epsilon_0}{2}}_{q+1}.\label{es-R4}
\end{align}
By virtue of \eqref{e:ubql} and \eqref{estimate-wt}, we easily deduce that
\begin{align}
\|\wtq\ootimes \ubql + \ubql\ootimes \wtq\|_{L^1_{t,x}}
\lesssim&K^{3/2}\|\wtq\|_{L^1_tL^{\infty}}\|\ubql\|_{L^\infty_tL^2}\nonumber\\
\lesssim&K^{3/2}M\delta_{q+2}\lambda^{-6\alpha}_{q+1}.\label{es-R5}
\end{align}
{With the aid of Proposition \ref{wtq-H3} and Proposition \ref{estimate-wq+1}, we have
\begin{align}
&\|\wpq\ootimes (\wcq+\wtq)\|_{L^1_{t,x}}+\|\wcq\ootimes w_{q+1}\|_{L^1_{t,x}}+\|\wtq\ootimes(\wpq+\wcq)\|_{L^1_{t,x}}\nonumber\\
\lesssim&\frac{1}{2}C_0\delta^{1/2}_{q+1}\delta_{q+2}\lambda^{-6\alpha}_{q+1}+\frac{1}{2}C_0\delta^{1/2}_{q+1}\lambda^{-1+\frac{\epsilon_0}{2}}_{q+1}.\label{es-R6}
\end{align}}
Collecting \eqref{es-R1}--\eqref{es-R6} together, we arrive at
\begin{align*}
\|\RR_{q+1}\|_{L^1_{t,x}}\lesssim K^{10}\delta_{q+2}\lambda^{-6\alpha}_{q+1}.
\end{align*}
For large enough $a$, this estimate combined with \eqref{epsilon} and \eqref{b-beta} yields \eqref{R—L1}.

{Now we are focused on  estimating  $\|\RR_{q+1}\|_{L^\infty_t W^{3,1}}$. By Proposition \ref{guji1} and Proposition \ref{est-ak}, we have
    \begin{align}
    \|\RR^{(1)}_{q+1}\|_{L^\infty_t W^{3,1}}
   \lesssim&\lambda^{-1}_{q+1}\|\partial_t g_{(2, \sigma_0, k)}\|_{L^\infty_t}\|{a}_{(k,q)}\|_{L^\infty_t W^{3,\infty}}\|\psi'_{\lambda^{-\epsilon_0}_{q+1}}(\lambda^{1-\epsilon_0}_{q+1}N_{\Lambda}k\cdot x)\|_{W^{3,1}(\TTT^3)}\nonumber\\
    &+\lambda^{-1}_{q+1}\| g_{(2, \sigma_0, k)}\|_{L^\infty_t}\|\partial_t {a}_{(k,q)}\|_{L^\infty_t W^{3,\infty}}\|\psi'_{\lambda^{-\epsilon_0}_{q+1}}(\lambda^{1-\epsilon_0}_{q+1}N_{\Lambda}k\cdot x)\|_{W^{3,1}(\TTT^3)}\nonumber\\
    \lesssim&K^{5/2}\ell^{-10}_q \lambda^{5+\sigma_0-\frac{\epsilon_0}{2}}_{q+1}.\nonumber
\end{align}
Applying Proposition \ref{guji1} and Proposition \ref{est-ak} to  $\RR^{(2)}_{q+1}$, we have
 \begin{align}
    \|\RR^{(2)}_{q+1}\|_{L^\infty_t W^{3,1}}
    \lesssim&\lambda^{-2}_{q+1}\|\psi_{\lambda^{-\epsilon_0}_{q+1}}(\lambda^{1-\epsilon_0}_{q+1}N_{\Lambda}k\cdot x)\|_{W^{3,1}(\TTT^3)}\| {a}_{(k,q)}\|_{L^\infty_t W^{4,\infty}}\|\partial_t g_{(2,\sigma_0,k)}\|_{L^\infty}\nonumber\\
    &+\lambda^{-2}_{q+1}\|\psi_{\lambda^{-\epsilon_0}_{q+1}}(\lambda^{1-\epsilon_0}_{q+1}N_{\Lambda}k\cdot x)\|_{{W^{3,1}}(\TTT^3)}\| \partial_t {a}_{(k,q)}\|_{L^\infty_t{W^{4,\infty}}}\| g_{(2,\sigma_0,k)}\|_{L^\infty}\nonumber\\
\lesssim&K^{5/2}\ell^{-12}_q\lambda^{4+\sigma_0-\frac{\epsilon_0}{2}}_{q+1}.\nonumber
\end{align}
For $\RR^{(3)}_{q+1}$, with the aid of \eqref{cm-2}, Proposition \ref{guji1} and Proposition \ref{est-ak}, one shows that
\begin{align}
    \|\RR^{(3)}_{q+1}\|_{L^\infty_t W^{3,1}}\lesssim& K^3N^3_{\Lambda}\sum_{m,l \in\ZZ\backslash\{0\}, m+l\neq 0}\lambda^{4-2\epsilon_0}_{q+1} {|c_{l, \epsilon_0}||c_{m, \epsilon_0}|}{|l+m|^2}\|a^2_{(k,q)}\|_{L^\infty H^4}\nonumber\\
    \lesssim&K^{12}N^3_{\Lambda}\sum_{m,l \in\ZZ\backslash\{0\}, m+l\neq 0} |l+m|^{-2}\lambda^{5+4\alpha}_q\ell^{-8}_q\lambda^{4+9\epsilon_0}_{q+1} \nonumber\\
\lesssim&K^{12}N^3_{\Lambda}\lambda^{5+4\alpha}_q\ell^{-8}_q\lambda^{4+9\epsilon_0}_{q+1} .\nonumber
\end{align}
Taking advantage of \eqref{wpq+wcq}, Proposition \ref{guji1} and Proposition \ref{est-ak}, we have
\begin{align*}
\|\nabla(\wpq+\wcq)\|_{L^\infty_t W^{3,1}}
\lesssim&\lambda^{-1}_{q+1}\|a_{(k,q)}  g_{(2, \sigma_0, k)}(t)\psi'_{\lambda^{-\epsilon_0}_{q+1}}(\lambda^{1-\epsilon_0}_{q+1}N_{\Lambda}k\cdot x)\|_{L^\infty_t  W^{4,1}}\\
\lesssim&\lambda^{-1}_{q+1}\|g_{(2, \sigma_0, k)}\|_{L^\infty}\|a_{(k,q)}\|_{L^\infty_t H^4}\|\psi'_{\lambda^{-\epsilon_0}_{q+1}}(\lambda^{1-\epsilon_0}_{q+1}N_{\Lambda}k\cdot x)\|_{H^4(\TTT^3)}\\
\lesssim&K^{3/2}\ell^{-8}_q\lambda^{4}_{q+1}.
\end{align*}
By virtue of \eqref{e:ubq}, \eqref{estimate-wp} and \eqref{estimate-wc}, we obtain that
\begin{align}
&\|(\wpq+\wcq)\ootimes \vv_q+\vv_q\ootimes (\wpq+\wcq)\|_{L^\infty_t W^{3,1}}\nonumber\\
\lesssim&(\|\wpq\|_{L^\infty_t H^3}+\|\wcq\|_{L^\infty_t H^3})\|\vv_q\|_{L^\infty_t H^3}
\lesssim\lambda^5_q\lambda^{9/2}_{q+1}.\nonumber
\end{align}
By \eqref{e:ubq} and \eqref{estimate-wt}, we have
\begin{align}
\|\wtq\ootimes \ubql +\ubql\ootimes \wtq \|_{L^\infty_t W^{3,1}}
\lesssim&\|\wtq\|_{L^\infty_tH^3}\|\ubql\|_{L^\infty_tH^3}\lesssim\lambda^5_q\lambda^{9/2}_{q+1}.\nonumber
\end{align}
By Proposition \ref{wtq-H3} and Proposition \ref{estimate-wq+1}, one deduces that
{\begin{align*}
    & \|\wpq\ootimes (\wcq+\wtq)\|_{L^\infty_t W^{3,1}}+\|\wcq\ootimes w_{q+1}\|_{L^\infty_t W^{3,1}}+\|\wtq\ootimes(\wpq+\wcq)\|_{L^\infty_t W^{3,1}}\\
     \le&\|\wcq\|_{L^\infty_tL^2}(\|\wpq\|_{L^\infty_tH^3}+\|\wcq\|_{L^\infty_tH^3}+\|\wtq\|_{L^\infty_tH^3})\\
     &+\|\wcq\|_{L^\infty_tH^3}(\|\wpq\|_{L^\infty_tL^2}+\|\wcq\|_{L^\infty_tL^2}+\|\wtq\|_{L^\infty_tL^2})\\
&+\|\wtq\|_{L^\infty_tL^2}\|\wpq\|_{L^\infty_tH^3}+\|\wtq\|_{L^\infty_tH^3}\|\wpq\|_{L^\infty_tL^2}\\
\lesssim& C_0\ell^{-6}_q\delta^{1/2}_{q+1}\lambda^{9/2}_{q+1}+N_{\Lambda}K^{10}\ell^{-12}_q\lambda^{4+8\epsilon_0}_{q+1}.
\end{align*}}
Therefore, collecting these estimates together imply that
\begin{align*}
     \|\RR_{q+1}\|_{L^\infty_t W^{3,1}}\lesssim K^{12}\ell^{-12}_q\lambda^{5+\sigma_0-\frac{\epsilon_0}{2}}_{q+1}.
\end{align*}
Thanks to the condition \eqref{b-beta}, we prove \eqref{R—W}, so that we complete the proof of Proposition~\ref{R-q+1}.}
\end{proof}
\begin{proposition}\label{est-E}Let $u_{q+1}=\bar{u}_q+w_{q+1}$, we have
    \begin{align*}
\Big|E-\int_{\frac{3T}{4}}^T\int_{\R^3}|u_{q+1}|^2\dd x\dd t-3\delta_{q+2}\Big|\le \delta_{q+2}\lambda^{-\alpha}_q.
    \end{align*}
\end{proposition}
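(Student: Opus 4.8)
The plan is to expand the energy on the slab $[\tfrac{3T}{4},T]\times\R^3$ and track one dominant contribution. On this slab $\eta(t)\equiv0$ by \eqref{def-eta}, hence $\rho_q(t)\equiv\bar\rho_q$ by \eqref{rho-q}, and (for $a$ large, so that $\tfrac{T}{4}+3\lambda_{q-1}^{-1}<\tfrac{3T}{4}$) $\zeta_q\equiv0$ by \eqref{def-zeta}, hence $\eta_q\equiv\widetilde\eta_q$ by \eqref{def-etaq}. Writing $u_{q+1}=\vv_q+w_{q+1}$ with $w_{q+1}=\wpq+\wcq+\wtq$ and $|u_{q+1}|^2=|\vv_q|^2+2\vv_q\cdot w_{q+1}+|w_{q+1}|^2$, it suffices to establish three facts on the slab: (a) $\int\vv_q\cdot w_{q+1}=O(\delta_{q+2}\lambda_q^{-\alpha})$; (b) $\int|w_{q+1}|^2=\int|\wpq|^2+O(\delta_{q+2}\lambda_q^{-\alpha})$; (c) $\int|\wpq|^2=E-\int_{\frac{3T}{4}}^T\int_{\R^3}|\vv_q|^2-3\delta_{q+2}+O(\delta_{q+2}\lambda_q^{-\alpha})$. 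Adding these and cancelling $\int|\vv_q|^2$ yields the assertion.

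The heart is (c). Since the temporal supports of the $g_{(2,\sigma_0,k)}$ are pairwise disjoint and $|\bar k|=1$, one has $|\wpq|^2=\sum_{k\in\Lambda}a_{(k,q)}^2\,\phi_{(\epsilon_0,1-\epsilon_0,k)}^2\,g_{(2,\sigma_0,k)}^2$. Using \eqref{P=0} I would decompose $\phi_{(\epsilon_0,1-\epsilon_0,k)}^2=1+\PP(\phi_{(\epsilon_0,1-\epsilon_0,k)}^2)$ in $x$ and $g_{(2,\sigma_0,k)}^2=1+\PP(g_{(2,\sigma_0,k)}^2)$ in $t$, so that $\int_{\frac{3T}{4}}^T\int_{\R^3}|\wpq|^2=\sum_k\int_{\frac{3T}{4}}^T\int_{\R^3}a_{(k,q)}^2+(\text{oscillation errors})$. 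Applying Lemma \ref{first S} to $R=\mathrm{Id}-\RR_{\ell_q}/(\chi_q\rho_q)$, taking traces and using that $\RR_{\ell_q}$ is trace-free, $\sum_k a_{(k,q)}^2$ is a fixed multiple of $\eta_q^2\chi_q\rho_q$; on the slab this is a multiple of $\widetilde\eta_q^2\chi_q\bar\rho_q$, and the choice \eqref{rho-bar} of $\bar\rho_q$ — legitimate because \eqref{eta_qchi_q} keeps $\int_{\frac{3T}{4}}^T\int\widetilde\eta_q^2\chi_q$ bounded away from $0$ and $\infty$ — is calibrated precisely so that $\sum_k\int_{\frac{3T}{4}}^T\int_{\R^3}a_{(k,q)}^2=E-\int_{\frac{3T}{4}}^T\int_{\R^3}|\vv_q|^2-3\delta_{q+2}$. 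This gives (c) up to the oscillation errors.

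The remaining estimates are all smallness by oscillation or by smallness of $L^2$-norms. The three oscillation errors from (c) — $\int a_{(k,q)}^2\PP(\phi_{(\epsilon_0,1-\epsilon_0,k)}^2)$, $\int a_{(k,q)}^2\PP(g_{(2,\sigma_0,k)}^2)$, and their product — I would handle by integrating by parts: twice in $x$ for the $\phi^2$-factor (frequencies $\gtrsim\lambda_{q+1}^{1-\epsilon_0}$, Fourier coefficients from Proposition \ref{est-cmr}) against $a_{(k,q)}^2$ using $\|a_{(k,q)}\|_{L^\infty_tH^2}\lesssim\ell_q^{-4}$ from Proposition \ref{est-ak}; once in $t$ for the $g^2$-factor, rewriting $g_{(2,\sigma_0,k)}^2=\tfrac{d}{dt}h_{\sigma_0}$ with the bounded primitive \eqref{def-h-sig} of size $\lambda_{q+1}^{-\sigma_0}$ and using $\|\partial_t a_{(k,q)}\|_{L^\infty_tH^2}\lesssim\ell_q^{-12}$; the improved Hölder inequality Lemma \ref{Holder} is available for auxiliary $L^2$-bounds. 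For (a), the $\wtq$-contribution is $\lesssim\|\vv_q\|_{L^2_{t,x}}\|\wtq\|_{L^2_{t,x}}\lesssim M\delta_{q+2}\lambda_{q+1}^{-6\alpha}$ by Cauchy–Schwarz, \eqref{e:ubq} and \eqref{estimate-wt}, while the $\wpq$- and $\wcq$-contributions are oscillation errors: using $\phi_{(\epsilon_0,1-\epsilon_0,k)}=N_\Lambda^{-1}\lambda_{q+1}^{-1}\Div(\psi'_{\lambda_{q+1}^{-\epsilon_0}}(\lambda_{q+1}^{1-\epsilon_0}N_\Lambda k\cdot x)\,k)$ (and the analogous form of $\wcq$ in \eqref{def-wcq}) and integrating by parts against $\vv_q\in L^\infty_tH^3$ gains powers of $\lambda_{q+1}^{-1}$. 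For (b), $\int|w_{q+1}|^2-\int|\wpq|^2$ is a sum of $\int|\wcq|^2$, $\int|\wtq|^2$ and cross terms, each bounded by Cauchy–Schwarz using $\|\wpq\|_{L^2_{t,x}}\lesssim\delta_{q+1}^{1/2}$, $\|\wcq\|_{L^2_{t,x}}\lesssim\lambda_{q+1}^{-1+\epsilon_0/2}$ (Proposition \ref{estimate-wq+1}) and $\|\wtq\|_{L^2_{t,x}}\lesssim\delta_{q+2}\lambda_{q+1}^{-6\alpha}$ (Proposition \ref{wtq-H3}). Since $\ell_q=\lambda_q^{-50}$, $\lambda_{q+1}=\lambda_q^{b}$ and $\delta_{q+2}\lambda_q^{-\alpha}=\lambda_q^{-2\beta b^2-\alpha}$ with $\beta=2^{-20}\alpha b^{-1}$ and $b\ge2^{20}\alpha^{-1}$, each of these bounds is $\ll\delta_{q+2}\lambda_q^{-\alpha}$ once $a$ is large; collecting (a)–(c) with \eqref{epsilon}–\eqref{def-lq} finishes the proof.

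The main obstacle is the oscillation-error step: one must check that the negative powers of $\ell_q=\lambda_q^{-50}$ accumulated when differentiating the amplitudes $a_{(k,q)}$ (Proposition \ref{est-ak}) are strictly beaten by the oscillation gains, i.e. the negative powers of $\lambda_{q+1}=\lambda_q^{b}$ produced by the integrations by parts. This is exactly the role of the hierarchy $b\gg1$ with $\alpha,\sigma_0,\epsilon_0,\beta$ small in \eqref{epsilon}–\eqref{b-beta}; once that is secured, what remains is bookkeeping of finitely many explicitly computable terms.
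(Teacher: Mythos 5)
Your proposal is correct and follows essentially the same route as the paper: the same splitting of $\int|u_{q+1}|^2$ into $\int|\vv_q|^2$, cross terms, and $\int|\wpq|^2$, the same identification of the main term via Lemma \ref{first S}, the trace-free property of $\RR_{\ell_q}$ and the calibration \eqref{rho-bar} of $\bar\rho_q$ (with $\eta\equiv0$, $\zeta_q\equiv0$ on the slab), and the same oscillation-error estimates using Proposition \ref{est-cmr}, the primitive $h_{\sigma_0}$ from \eqref{def-h-sig}, and the amplitude bounds of Proposition \ref{est-ak}. The only deviation is cosmetic: you treat the $\vv_q\cdot\wpq$, $\vv_q\cdot\wcq$ cross terms by spatial integration by parts, whereas the paper simply uses the temporal concentration bound $\|\wpq\|_{L^1_tL^\infty}+\|\wcq\|_{L^1_tL^\infty}\lesssim\lambda_{q+1}^{-1+\epsilon_0/2}$ against $\|\vv_q\|_{L^\infty_{t,x}}$; both yield the required smallness.
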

\begin{proof}We write
\begin{align*}
    \int_{\frac{3T}{4}}^T\int_{\R^3}|u_{q+1}|^2\dd x\dd t&=\int_{\frac{3T}{4}}^T\int_{\R^3}|\vv_q|^2\dd x\dd t+\int_{\frac{3T}{4}}^T\int_{\R^3}|w_{q+1}|^2\dd x\dd t+2\int_{\frac{3T}{4}}^T\int_{\R^3}\vv_q\cdot w_{q+1}\dd x\dd t\\
    &=:{\rm I+II+III}.
\end{align*}
For ${\rm III}$, we split $w_{q+1}$ into three parts $\wpq$, $\wcq$ and $\wtq$.  Note that
$$\supp_x \wpq=\supp_x\wcq\subseteq\Omega_{q+1},$$
we obtain
\begin{align*}
  |{\rm III}|\lesssim\Big|\int_{\frac{3T}{4}}^T\int_{\Omega_{q+1}}\vv_q\cdot \wpq\dd x\dd t\Big|+\Big|\int_{\frac{3T}{4}}^T\int_{\Omega_{q+1}}\vv_q\cdot \wcq\dd x\dd \Big|+\Big|\int_{\frac{3T}{4}}^T\int_{\R^3}\vv_q\cdot \wtq\dd x\dd t\Big|.
\end{align*}
Thanks to \eqref{e:vq-H3}, \eqref{e:v_ell-CN+1} and \eqref{estimate-wt}, one deduces that
\begin{align*}
\Big |\int_{\frac{3T}{4}}^T\int_{\R^3}\vv_q\cdot \wtq\dd x\dd t\Big|\le \|\vv_q\|_{L^\infty_tL^2}\|\wtq\|_{L^1_tL^2}\lesssim M\lambda^5_q\lambda^{-\frac{\sigma_0}{2}}_{q+1}.
\end{align*}
Moreover, by \eqref{est-wp-Lp} and \eqref{est-wcq-Lp}, we have
\begin{align*}
&\Big |\int_{\frac{3T}{4}}^T\int_{\Omega_{q+1}}\vv_q\cdot \wpq\dd x\dd t\Big|+\Big |\int_{\frac{3T}{4}}^T\int_{\Omega_{q+1}}\vv_q\cdot \wcq\dd x\dd t\Big|\\
\lesssim& K^{3}\|\vv_q\|_{L^\infty_{t,x}}(\|\wpq\|_{L^1_t L^\infty}+\|\wcq\|_{L^1_tL^\infty})\\
\lesssim& K^{3}\lambda^5_q\lambda^{-1+\frac{\epsilon_0}{2}}_{q+1}.
\end{align*}
By $w_{q+1}=\wpq+\wcq+\wtq$, we rewrite ${\rm II}$ as
\begin{align*}
{\rm II}=&\int_{\frac{3T}{4}}^T\int_{\R^3}|\wpq|^2\dd x\dd t+2\int_{\frac{3T}{4}}^T\int_{\Omega_{q+1}}\wpq\cdot(\wcq+\wtq)\dd x\dd t\\
&+\int_{\frac{3T}{4}}^T\int_{\R^3}(\wcq+\wtq)\cdot(\wcq+\wtq)\dd x\dd t.
\end{align*}
With the aid of  \eqref{estimate-wt}--\eqref{estimate-wp} and \eqref{est-wp-Lp}, one has
\begin{align*}
&\Big|\int_{\frac{3T}{4}}^T\int_{\R^3}\wpq\cdot(\wcq+\wtq)\dd x\dd t\Big|+\Big|\int_{\frac{3T}{4}}^T\int_{\R^3}(\wcq+\wtq)\cdot(\wcq+\wtq)\dd x\dd t\Big|\\
\le&\|\wpq\|_{L^2_{t,x}}\|\wcq\|_{L^2_{t,x}}+K^{3/2}\|\wpq\|_{L^1_tL^\infty}\|\wtq\|_{L^\infty_tL^2}\\
&+(\|\wcq\|_{L^2_{t,x}}+\|\wtq\|_{L^2_{t,x}})(\|\wcq\|_{L^2_{t,x}}+\|\wtq\|_{L^2_{t,x}})\\
\lesssim&\delta^{1/2}_{q+1}\lambda^{-1+\frac{\epsilon_0}{2}}_{q+1}+K^{3/2}\lambda^{-1+\frac{\epsilon_0}{2}-\frac{\sigma_0}{2}}_{q+1}+\lambda^{-\sigma_0}_{q+1}.
\end{align*}
Noting that $\supp_x \wpq\subseteq [-\tfrac{K}{2}, \tfrac{K}{2}]^3:=D_K$, by the definition of $\wpq$, we deduce that
\begin{align}
&\int_{\frac{3T}{4}}^T\int_{\R^3}|\wpq|^2\dd x\dd t=\int_{\frac{3T}{4}}^T\int_{ D_K}\tr (\wpq\otimes\wpq)\dd x\dd t\nonumber\\
=&\int_{\frac{3T}{4}}^T\int_{D_K}\tr \Big(\sum_{k\in\Lambda}
 a^2_{(k,q)} g^2_{(2, \sigma_0, k)}(t)\bar{k}\otimes \bar{k}\Big)\dd x\dd t\nonumber\\
 &+\int_{\frac{3T}{4}}^T\int_{D_K}\tr \Big(\sum_{k\in\Lambda}\sum_{m,l\in\ZZ\backslash\{0\}, m+l\neq 0}
 a^2_{(k,q)}  g^2_{(2, \sigma_0, k)}c_{l, \epsilon_0}c_{m, \epsilon_0}e^{2\pi \ii \lambda^{1-\epsilon_0}_{q+1} (l+m) k\cdot x/K }\bar{k}\otimes \bar{k} \Big)\dd x\dd t.\nonumber
\end{align}
The first term on the right-hand side of the above inequality can be rewritten as
\begin{align*}
&\int_{\frac{3T}{4}}^T\int_{D_K}\tr \Big(\sum_{k\in\Lambda}a^2_{(k,q)}  g^2_{(2, \sigma_0, k)}(t)\bar{k}\otimes \bar{k}\Big)\dd x\dd t\\
 =&\int_{\frac{3T}{4}}^T\int_{D_K}\tr \Big(\sum_{k\in\Lambda}a^2_{(k,q)}  \bar{k}\otimes \bar{k}\Big)\dd x\dd t+\int_{\frac{3T}{4}}^T\int_{D_K}\tr \Big(\sum_{k\in\Lambda}a^2_{(k,q)}  \mathbb{P}_{\neq 0}(g^2_{(2, \sigma_0, k)})\bar{k}\otimes \bar{k}\Big)\dd x\dd t\\
 =&3{\rho}_q\int_{\frac{3T}{4}}^T\int_{D_K}\eta^2_q\chi_q\dd x\dd t+\int_{\frac{3T}{4}}^T\int_{D_K}\tr \Big(\sum_{k\in\Lambda}a^2_{(k,q)}  \mathbb{P}_{\neq 0}(g^2_{(2, \sigma_0, k)})\bar{k}\otimes \bar{k}\Big)\dd x\dd t\\
 =&{E-\int_{\frac{3T}{4}}^T\int_{\R^3}|\vv_q|^2\dd x\dd t-3\delta_{q+2}}+\int_{\frac{3T}{4}}^T\int_{D_K}\tr \Big(\sum_{k\in\Lambda}a^2_{(k,q)}  \mathbb{P}_{\neq 0}(g^2_{(2, \sigma_0, k)})\bar{k}\otimes \bar{k}\Big)\dd x\dd t,
\end{align*}
where we have used \eqref{rho-bar} and \eqref{rho-q} in the last equality. Thanks to  \eqref{def-h-sig}, we have
\begin{align*}
&\Big|\int_{\frac{3T}{4}}^T\int_{D_K}\tr \Big(\sum_{k\in\Lambda}a^2_{(k,q)}  \mathbb{P}_{\neq 0}(g^2_{(2, \sigma_0, k)})\bar{k}\otimes \bar{k}\Big)\dd x\dd t\Big|\\
 =&\Big|\tr \Big(\sum_{k\in\Lambda}\int_{D_K}\int_{\frac{3T}{4}}^Ta^2_{(k,q)}  \dd h_{\sigma_0}(t) \dd x \,\bar{k}\otimes \bar{k}\Big)\Big|\\
 \lesssim&\|h_{\sigma_0}(t)\|_{L^\infty_t}(\|a_{(k,q)}\|^2_{L^\infty_tL^2}+\|a_{(k,q)}\|_{L^\infty_tL^2}\|\partial_ta_{(k,q)}\|_{L^\infty_tL^2})\\
\lesssim&\ell^{-16}_q\lambda^{-\sigma_0}_{q+1}.
\end{align*}
With the aid of Proposition \ref{est-cmr} and integration by parts, one deduces that
\begin{align*}
&\Big|\int_{\frac{3T}{4}}^T\int_{D_K}\tr \Big(\sum_{k\in\Lambda}\sum_{m,l\in \ZZ\backslash\{0\}, m+l\neq 0}
a^2_{(k,q)} g^2_{(2, \sigma_0, k)}(t)c_{l, \epsilon_0}c_{m, \epsilon_0}e^{ \ii 2\pi\lambda^{1-\epsilon_0}_{q+1} N_{\Lambda}(l+m) k\cdot x/K }\bar{k}\otimes \bar{k} \Big)\dd x\dd t\Big|\\
\lesssim&K^4\sum_{m,l\in \ZZ\backslash\{0\}, m+l\neq 0}\|a^2_{(k,q)} \|_{L^\infty_tW^{1,\infty}}{|c_{l, \epsilon_0}| |c_{m, \epsilon_0}|}{\lambda^{-1+\epsilon_0}_{q+1}}\\
 \lesssim&K^{13}\sum_{m,l\in \ZZ\backslash\{0\}, m+l\neq 0}\lambda^{5+\alpha}_q\ell^{-6}_q\lambda^{-1+10\epsilon_0}_{q+1}l^{-4}m^{-4}\\
\lesssim&K^{13}\lambda^{5+\alpha}_q\ell^{-6}_q\lambda^{-1+10\epsilon_0}_{q+1}.
\end{align*}
In conclusion, we obtain that
\begin{align*}
\Big|E-\int_{\frac{3T}{4}}^T\int_{\R^3}|u_{q+1}|^2\dd x\dd t-3\delta_{q+2}\Big|
\lesssim K^{13}\ell^{-16}_q\lambda^{-\sigma_0}_{q+1},
\end{align*}
which together with \eqref{epsilon} and \eqref{b-beta} shows Proposition \ref{est-E}.
\end{proof}
\subsection{Iterative estimates at $q+1$ level}
Now we collect these estimates together to show that $u_{q+1}$ and the Reynolds  stress $\RRR_{q+1}$  satisfies \eqref{uq-tigh}--\eqref{e:RR_q-C0}.

Firstly, we define $u_{q+1}=\vv_q+w_{q+1}$. Owning to
\begin{align*}
    \vv_q=\ubql+\ubqnl\quad\text{and}\quad w_{q+1}=\wpq+\wcq+\wtq,
\end{align*}
 we give
\[\uqql=\ubql+\wpq+\wcq, \quad\uqqnl=\ubqnl+\wtq.\]
Using \eqref{e:ubql}--\eqref{e:ubq}, \eqref{estimate-wt}, Proposition \ref{wtq-H3}--\ref{estimate-wq+1}, we infer from \eqref{epsilon} and \eqref{b-beta} that
\begin{align*}
   &\|\uqql\|_{L^2_{t,x}\cap L^p_tL^{\infty}}\le \frac{M}{2}(1-\delta^{1/2}_q)+\frac{1}{2}\delta^{1/2}_{q+1}\le \frac{M}{2}(1-\delta^{1/2}_{q+1}),\\
   & \|\uqqnl\|_{\widetilde{L}^{\infty}_tB^{3}_{2,2}}  \le  \lambda^5_q+\lambda^{\frac{9}{2}}_{q+1}\le \lambda^5_{q+1},\\
   &\|\ubqnl\|_{\widetilde{L}^{\infty}_tB^{1/2}_{2,1}\cap \widetilde{L}^{1}_tB^{5/2}_{2,1} }\le M^{-1}+\sum_{m=2}^{q} \delta_{k+1}\lambda^{-6\alpha}_k +\delta_{q+2}\lambda^{-6\alpha}_{q+1}= M^{-1}+\sum_{m=2}^{q+1} \delta_{k+1}\lambda^{-6\alpha}_k,\\
   &\|u_{q+1}\|_{L^2_{t,x}}\leq M(1- \delta^{1/2}_q)+\frac{1}{2}\delta^{1/2}_{q+1}\le M(1- \delta^{1/2}_{q+1}),\\
   &\|u_{q+1}\|_{L^\infty_tH^3}  \le  \lambda^5_q+\lambda^{\frac 92}_{q+1}\le \lambda^{5}_{q+1}.
\end{align*}
Since $\supp_x \wpq=\supp_x\wcq\subseteq\Omega_{q+1}$ and $\supp_x\ubql=\Omega_{q}+[-\lambda^{-1}_q, \lambda^{-1}_q]^3\subseteq \Omega_{q+1}$, we have
\[\supp_x \uqql\subseteq\Omega_{q+1}.\]
Hence, we prove that estimates \eqref{uq-tigh}--\eqref{e:vq-H3} hold with $q$ replaced by $q+1$. Proposition \ref{est-E} directly yields  \eqref{e:E-q} at $q+1$ level.

Note that
\begin{align*}
    a_{(k,q)}(t)=0,\quad \text{for}\,\, 0\le t\le \frac{T}{4}+2\lambda^{-1}_{q-1},
\end{align*}
this fact combined with \eqref{supp-wtq} shows that, for $0\le t\le \frac{T}{4}+2\lambda^{-1}_{q-1}$,
\begin{align}\label{R-t}
\RR^{(1)}_{q+1}(t)=\RR^{(2)}_{q+1}(t)=\RR^{(3)}_{q+1}(t)=0, \,\,\wpq(t)=\wcq(t)=\wtq(t)=0.
\end{align}
Therefore, we obtain from the definition of $\RR_{q+1}$ in \eqref{def-R-q+1} that
\begin{align*}
\RR_{q+1}(t)=0, \quad 0\le t\le \frac{T}{4}+2\lambda^{-1}_{q-1}.
\end{align*}
This fact together with \eqref{suppx-R} gives \eqref{supp-Rq} at $q+1$ level. Proposition \ref{R-q+1} directly shows that \eqref{e:RR_q-C0} holds for $\RR_{q+1}$. Thanks to \eqref{ubq-t} and \eqref{R-t}, we obtain \eqref{uq+1=uq} by $u_{q+1}=\vv_q+w_{q+1}$.  By \eqref{uq-ubq} and \eqref{estimate-w}, we have
\begin{align*}
    \|u_{q+1}-u_q\|_{L^2_{t,x}\cap L^p_tL^\infty}\le &   \|u_q-\vv_q\|_{L^2_{t,x}\cap L^p_tL^\infty}+\|w_{q+1}\|_{L^2_{t,x}\cap L^p_tL^\infty}\\
    \le& \lambda^{-40}_q+\frac{1}{2}C_0\delta^{1/2}_{q+1}\le C_0\delta^{1/2}_{q+1},
\end{align*}
where the last inequality holds by \eqref{b-beta}, and thereby we give \eqref{uq+1-uq}. Therefore, we complete the proof of Proposition \ref{iteration}.

\section*{Acknowledgement}
This work was supported by the National Key Research and Development Program of China (No. 2022YFA1005700).


\begin{thebibliography}{99}
\bibitem{ABC}
D. \textsc{Albritton}, E. \textsc{Bru\'{e}}, and M. \textsc{Colombo}, \emph{Non-uniqueness of leray solutions of the forced
Navier-Stokes equations}, Ann. Math., 196 (1) (2022), 415--455.

\bibitem{BCD11}
H. \textsc{Bahouri}, J.Y. \textsc{Chemin}, R. \textsc{Danchin}, Fourier Analysis and Nonlinear Partial Differential Equations, in: Grundlehren der mathematischen Wissenschaften, vol. 343, Springer-Verlag, 2011.


\bibitem{2Beekie}
R. \textsc{Beekie}, T. \textsc{Buckmaster}, V.\textsc{Vicol},
\newblock {\emph{Weak solutions of ideal MHD which do not conserve magnetic
  helicity}}. {Ann. PDE.}, 6 (1) (2020),  Paper No. 1, 40 pp.

\bibitem{BCD}
E.  Bru\`{e}, M. Colombo, C. DeLellis, \emph{Positive solutions of transport equations and classical nonuniqueness of characteristic curves}, Arch. Ration. Mech. Anal.,  240 (2) (2021), 1055--1090,



\bibitem{Buc}
T. \textsc{Buckmaster}, \textit{Onsager’s conjecture almost everywhere in Time}, Comm. Math. Phys., 333 (3) (2015), 1175--1198.

\bibitem{BCV}
 T. \textsc{Buckmaster}, M. \textsc{Colombo}, V. \textsc{Vicol},  \emph{Wild solutions of the   Navier-Stokes equations whose singular
sets in time have hausdorff dimension strictly less than 1}, J. Eur. Math. Soc., 24 (9) (2021), 3333--3378.

\bibitem{BDIS15}
T. \textsc{Buckmaster}, C. \textsc{De Lellis}, P. \textsc{Isett} , L.  \textsc{ Sz\'{e}kelyhidi Jr.}, \emph{Anomalous
dissipation for 1/5-H\"{o}lder Euler flows}, Ann. of Math., 182 (1) (2015), 127--172.

\bibitem{BDS}
T. \textsc{Buckmaster}, C. \textsc{De Lellis}, L. \textsc{Sz\'{e}kelyhidi Jr.}, \emph{Dissipative Euler flows with Onsager'critical spatial regularity}, Commun. Pur.  Appl.  Math., 69 (9) (2016), 1613--1670.



\bibitem{BV}
T. \textsc{Buckmaster}, V. \textsc{Vicol},  \emph{Nonuniqueness of weak solutions to the Navier-Stokes equation}, Ann. of Math., 189 (1) (2019), 101--144.




\bibitem{1Cheskidov}
 A. \textsc{Cheskidov}, X. \textsc{Luo}, \emph{Sharp nonuniqueness for the Navier-Stokes equations}, Invent. Math., 229 (3) (2022): 987--1054 .

\bibitem{CL21}
A.  \textsc{Cheskidov},  X. \textsc{Luo},  \emph{Nonuniqueness of weak solutions for the transport equation at critical space
 regularity}, Ann. PDE, 7 (1) (2021), Paper No. 2, 45 pp.

\bibitem{CL22}
A.   \textsc{Cheskidov}, X. \textsc{Luo},  \emph{Extreme temporal intermittency in the linear Sobolev transport: almost
 smooth nonunique solutions}, arXiv:2204.0895, 2022.



 \bibitem{DS17}
 S. \textsc{Daneri}, L. \textsc{Sz\'{e}kelyhidi Jr.}, \emph{Non-uniqueness and h-principle for H\"{o}lder-continuous weak solutions of the Euler equations}, Arch. Ration. Mech. Anal., 224 (2) (2017), 471--514.

 \bibitem{DRS}
S. \textsc{Daneri}, E.  \textsc{Runa},  L. \textsc{Sz\'{e}kelyhidi Jr.}, \emph{Non-uniqueness for the Euler Equations up to Onsager's Critical Exponent}, Ann. PDE, 7 (1) (2021), Paper No. 8, 44 pp.

\bibitem{DS09}
C. \textsc{De Lellis}, L. \textsc{Sz\'{e}kelyhidi Jr}, \emph{The Euler equations as a
differential inclusion},  Ann. of Math., \emph{(2)} 170 (3) (2009), 1417--1436.



\bibitem{DS14}
C. \textsc{De Lellis}, L. \textsc{Sz\'{e}kelyhidi Jr}, \emph{Dissipative Euler flows and Onsager's conjecture}, J. Eur. Math. Soc., 16 (7) (2014), 1467--1505.

\bibitem{ESS}
L. \textsc{Escauriaza}, G. \textsc{Seregin}, V. \textsc{\v{S}ver\'{a}k}, $L_{3,\infty}$-solutions to the Navier-Stokes equations and backward uniqueness. {Russ. Math. Surv.}, {58} (2) (2003), 211--250.

\bibitem{FJR}
E. B. \textsc{Fabes},  B. F. \textsc{Jones}, N. M. \textsc{Rivi\`{E}re},   \emph{The initial value problem for the Navier-Stokes equations with data in $L^p$},  Arch. Ration. Mech. Anal., 45 (1972), 222--240 .

\bibitem{FLT}
G. \textsc{Furioli}, P. G. \textsc{Lemari\`{e}-Rieusset}, E. \textsc{Terraneo}, \emph{ Unicit\`{e} dans L3($\R^3$) et d’autres espaces
fonctionnels limites pour Navier-Stokes}, Rev. Mat. Iberoam., 16 (3)  (2000), 605--667.


\bibitem{Hopf}
E. \textsc{Hopf}, \emph{\"{U}ber die Anfangswertaufgabe f\"{u}r die hydrodynamischen Grundg  leichungen}, {Math. Nachr.}, {4}(1-6)(1950), 213-231.

\bibitem{Ise17}
P. \textsc{Isett}, H\"{o}lder Continuous Euler Flows in Three Dimensions with Compact Support in Time,  Princeton University Press, (2017).

\bibitem{Ise18}
P. \textsc{Isett}, \emph{ A proof of Onsager’s conjecture},  Ann. of Math., \emph{(2)}, 188 (3) (2018), 871--963.



\bibitem{Ise22}
P. \textsc{Isett}, \emph{Nonuniqueness and Existence of Continuous, Globally Dissipative Euler Flows},  Arch. Ration. Mech. Anal., 244 (3) (2022) 1223--1309.

\bibitem{I-O}
P. \textsc{Isett}, S. \textsc{Oh}, \emph{On Nonperiodic Euler Flows with H\"{o}lder Regularity},  Arch. Ration. Mech. Anal., 221 (2) (2016) 725--804.

\bibitem{JS}
H. \textsc{Jia} and V. \textsc{\v{S}ver\'{a}k}, \emph{Are the incompressible 3d Navier-Stokes equations locally  ill-posed in the natural
energy space?}, J. Funct. Anal., 268 (12) (2015), 3734--3766.


\bibitem{KS}
H. \textsc{Kozono},   H.  \textsc{Sohr},  \emph{Remark on uniqueness of weak solutions to the Navier–Stokes
equations}, Analysis, 16 (3)  (1996), 255--271.



\bibitem{Lady}
O. A. \textsc{Lady\v{z}enskaja}, \emph{Uniqueness and smoothness of generalized solutions of Navier–Stokes
equations}, Zap. Nau\v{c}n. Sem. Leningrad. Otdel. Mat. Inst. Steklov. 5  (1967), 169--185.

\bibitem{L}
J. \textsc{Leray}, \emph{Sur le mouvement d'un liquide visqueux emplissant l'espace}, Acta math., {63} (1) (1934), 193--248.

\bibitem{LZZ}
Y. \textsc{Li}, Z. \textsc{Zeng}, D.  \textsc{Zhang}, \textit{Non-uniqueness of weak solutions to 3D magnetohydrodynamic equations},
J. Math. Pures Appl., 165  (9)  (2022), 232--285.



\bibitem{Luo}
 X. \textsc{Luo},  \emph{Stationary solutions and nonuniqueness of weak solutions for the Navier–Stokes equations in high dimensions}, Arch. Ration. Mech. Anal. 233 (2) (2019), 701--747.

\bibitem{MY}
C.  \textsc{Miao}, W. \textsc{Ye}, \textit{
On the weak solutions for the MHD systems with controllable total energy and cross helicity}, J. Math. Pures Appl., 181 (9) (2024), 190--227.


\bibitem{MNY}
C.  \textsc{Miao},  Y. \textsc{Nie}, W. \textsc{Ye}, \textit{On Onsager's type conjecture for the inviscid Boussinesq equations}, J. Funct. Anal., 287 (7) (2024), doi.org/10.1016/j.jfa.2024.110527.

\bibitem{MWZ}
C. \textsc{Miao}, J. \textsc{Wu},  Z.  \textsc{Zhang}, Littlewood-Paley Theory and Applications to Fluid Dynamics Equations. volme 142, Monographs on Modern Pure Mathematics, 2012(Beijing: Science Press).



\bibitem{MoS}
 S. \textsc{Modena}, Jr, L. \textsc{Székelyhidi}, \emph{Non-uniqueness for the transport equation with Sobolev vector fields}, Ann. PDE, 4 (2) (2018): Paper  No. 18, 38 pp.

\bibitem{MS}
S. \textsc{M\"{u}ller}, V. \textsc{\v{S}ver\'{a}k}, \emph{Convex integration for Lipschitz mappings and counterexamples to regularity}, Ann. of Math., \emph{(2)} 157 (3) (2003), 715--742.



\bibitem{Prodi}
G. \textsc{Prodi}, \emph{Un teorema di unicit\`{a} per le equazioni di Navier-Stokes [A uniqueness
theorem for the Navier-Stokes equations]}, {Ann. Mat. Pura Appl.}, {48} (1959): 173--182.

\bibitem{S93}
 V. \textsc{Scheffer}, \emph{An inviscid flow with compact support in space–time}, J. Geom. Anal., 3 (4) (1993), 343--401.

\bibitem{Serr}
J.  \textsc{Serrin}, \emph{On the interior regularity of weak solutions of the Navier-Stokes equations}, {Arch. Ration. Mech. Anal.}, {9} (1) (1962), 187--195.





\bibitem{TZ17}
T. \textsc{Tao}, L. \textsc{Zhang}, \emph{H\"{o}lder continuous solutions of Boussinesq equation
with compact support}, J. Funct. Anal., 272 (10) (2017), 4334--4402.

\bibitem{TZ18}
T. \textsc{Tao}, L. \textsc{Zhang}, \emph{On the continuous periodic weak solutions of Boussinesq equations}, Siam. J. Math. Anal., 50 (1) (2018), 1120--1162.


\end{thebibliography}
\end{document}